\def\NZQ{\mathbb}               
\def\NN{{\NZQ N}}
\def\QQ{{\NZQ Q}}
\def\ZZ{{\NZQ Z}}
\def\RR{{\NZQ R}}
\def\CC{{\NZQ C}}
\def\PP{{\NZQ P}}
\newtheorem{Theorem}{Theorem}[section]
\newtheorem{Lemma}[Theorem]{Lemma}
\newtheorem{Corollary}[Theorem]{Corollary}
\newtheorem{Proposition}[Theorem]{Proposition}
\newtheorem{Remark}[Theorem]{Remark}
\newtheorem{Example}[Theorem]{Example}
\newtheorem{Definition}[Theorem]{Definition}
\let\epsilon\varepsilon
\let\phi=\varphi
\let\kappa=\varkappa
\def \mm{\mathfrak m}
\def\mR{m_R}
\def\Ass{\operatorname{Ass}}
\def\mfp{\mathfrak p}
\def\mfa{\mathfrak a}
\def\Spec{\operatorname{Spec}}
\def \Min{\operatorname{Min}}
\def\liminf{\operatorname {liminf}}
\def\depth{\operatorname{depth}}
\def\height{\operatorname{height}}
\begin{document}
\title{Analytic Spread of  Filtrations and Symbolic Algebras}

\author{Steven Dale Cutkosky}
\author{Parangama Sarkar}

\thanks{The first author was partially supported by NSF grant DMS-2054394.}
\thanks{The second author was partially supported by the DST, India: INSPIRE Faculty Fellowship.}

\address{Steven Dale Cutkosky, Department of Mathematics,
University of Missouri, Columbia, MO 65211, USA}
\email{cutkoskys@missouri.edu}

\address{Parangama Sarkar,  Department of Mathematics,
Indian Institute of Technology, Palakkad, India}
\email{parangama@iitpkd.ac.in}

 \begin{abstract} In this paper we define and explore the analytic spread $\ell(\mathcal I)$ of a filtration in a local ring. We show that, especially for divisorial and symbolic filtrations, some basic properties of the analytic spread of an ideal extend to filtrations, even when the filtration is non Noetherian. We also illustrate some significant differences between the analytic spread of a filtration and the analytic spread of an ideal with examples.
 
 In the case of an ideal $I$, we have the classical bounds $\mbox{ht}(I)\le\ell(I)\le \dim R$. The upper bound $\ell(\mathcal I)\le \dim R$ is true for filtrations $\mathcal I$, but the lower bound is not true for all filtrations. We show that for the filtration $\mathcal I$ of symbolic powers of a height two prime ideal $\mathfrak p$ in a regular local ring of dimension three (a space curve singularity), so that $\mbox{ht}(\mathcal I) =2$ and $\dim R=3$, we have that $0\le \ell(\mathcal I)\le 2$ and all values of 0,1 and 2 can occur. In the cases of analytic spread 0 and 1 the symbolic algebra is necessarily non-Noetherian. The symbolic algebra is  non-Noetherian if and only if $\ell(\mathfrak p^{(n)})=3$ for all symbolic powers of  $\mathfrak p$ and if and only if $\ell(\mathcal I_a)=3$ for all 
 truncations $\mathcal I_a$ of $\mathcal I$.
\end{abstract}

\subjclass[2010]{13A02, 13A15, 13A18}

\maketitle



\section{Introduction}
  
 The analytic spread of an ideal $I$ in a (Noetherian) local ring $R$ is defined to be
  \begin{equation}\label{eqN0}
  \ell(I)=\dim R[I]/m_RR[I]
  \end{equation}
  where $R[I]=\bigoplus_{n\ge 0}I^n$ is the Rees algebra of $I$.  
  
  We recall some basic properties of analytic spread from \cite{Li} and \cite{HS}.
  We  have that upper semicontinuity of fiber dimension holds, that is 
\begin{equation}\label{eqN2}
\ell(I_P)\le \ell(I_{P'})\mbox{ if }P\subset P'\mbox{ are prime ideals containing $I$}.
\end{equation}
This follows for instance by \cite[(IV.13.1.5)]{EGA}.

  We have  inequalities (\cite[page 115]{Li} and \cite[Corollary 8.3.9]{HS})
  \begin{equation}\label{eqN1}
  \ell(I)\le \dim R
  \end{equation}
  and
  \begin{equation}\label{eqN3}
   {\rm ht}(I)\le \ell(I).
   \end{equation}
  The lower bound (\ref{eqN3}) follows from (\ref{eqN2}) since at a minimal prime $Q$ of $I$, we have that 
   $\ell(I_Q)=\mbox{ht}(Q)\ge \mbox{ht}(I)$ since $I_Q$ is $Q_Q$-primary.
   
     An ideal $I$ in a local ring $R$ for which the equality $\mbox{ht}(I)=\ell(I)$ holds is called equimultiple. $I$ is equimultiple if and and only if all fibers of $\pi_0:\mbox{Proj}(R[I]/IR[I])\rightarrow \mbox{Spec}(R/I)$ have the same dimension. This follows since 
     if $I$ is equimultiple and   $P$ is a prime ideal of $R$ which contains $I$, then by (\ref{eqN3}) and (\ref{eqN2}),
 $$
  \mbox{ht}(I)\le\mbox{ht}(I_P) \le \ell(I_P)
 \le\ell(I)
 =\mbox{ht}(I).
  $$
 In particular, if $I$ is an equimultiple ideal, then
 \begin{equation}\label{eqN7*}
 \ell(I_P)=\mbox{ht}(I_P)
 \end{equation}
 for all  prime ideals $P$ containing $I$.   For the other direction, we consider a minimal prime $P$ of $I$ such that $\mbox{ht}(I)=\mbox{ht}(P)$. If all fibers of $\pi_0$ have the same dimension then we have $\mbox{ht}(I)=\mbox{ht}(I_P)=\ell(I_P)=\ell(I)$.

We have the following fundamental theorem.

\begin{Theorem}\label{ThmN10}(\cite{Mc}, \cite[Theorem 5.4.6]{HS}) Let $R$ be a formally equidimensional  local ring and $I$ be an ideal in $R$. Then $m_R\in \mbox{Ass}(R/\overline{I^n})$ for some $n$ if and only if $\ell(I)=\dim R$.
\end{Theorem}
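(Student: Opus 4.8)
I would prove both implications through a single geometric criterion: $\ell(I)=\dim R$ if and only if some Rees valuation of $I$ has center $m_R$ on $\Spec R$.  The first step is a reduction.  Passing to $\hat R$ is harmless: $m_R\in\Ass_R(R/\overline{I^n})$ iff $m_R\hat R\in\Ass_{\hat R}(\hat R/\overline{I^n}\hat R)$ by flat base change for associated primes, $\ell(I)=\ell(I\hat R)$ and $\dim R=\dim\hat R$; and because $R$ is formally equidimensional one has $\overline{I^n}\hat R=\overline{(I\hat R)^n}$ (a standard consequence, see \cite{HS}), so the statement for $R$ reduces to that for $\hat R$.  A complete local ring is excellent, and formal equidimensionality makes it equidimensional; replacing $R$ by $R_{\mathrm{red}}$ alters neither $\ell(I)$ nor $R/\overline{I^n}$, so we may assume $R$ is complete, reduced and equidimensional, hence analytically unramified.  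Write $Q_1,\dots,Q_s$ for its minimal primes; by equidimensionality $\dim R/Q_j=\dim R=:d$ for every $j$.

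\textbf{The normalized blowup.}  Let $\nu\colon\overline X\to\Spec R$ be the normalized blowup of $I$, so $\overline X=\operatorname{Proj}\!\big(\bigoplus_n\overline{I^n}t^n\big)$; it is proper and birational, the integral closure of the Rees algebra being module-finite since $R$ is analytically unramified.  Then $I\mathcal O_{\overline X}$ is invertible, and its zero scheme $D$ is an effective Cartier divisor; working on each integral component separately (each one the normalized blowup over a complete local domain $R/Q_j$, where the dimension formula holds), every component of $D$ has dimension $d-1$.  The Rees valuation rings $V_1,\dots,V_r$ of $I$ are the discrete valuation rings $\mathcal O_{\overline X,\eta_i}$ at the codimension-one generic points $\eta_i$ of $D$, with centers $P_i:=\nu(\eta_i)$ on $\Spec R$.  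Since $\overline X\times_{\Spec R}\Spec(R/m_R)$ is finite over $\operatorname{Proj}(R[I]/m_R R[I])$, the fiber $\nu^{-1}(m_R)$ has dimension $\ell(I)-1$; moreover $I\subseteq m_R$ gives $\nu^{-1}(m_R)\subseteq D$, and a component of $D$ lies in $\nu^{-1}(m_R)$ exactly when its generic point maps to $m_R$, i.e. when the corresponding $P_i$ equals $m_R$.  Comparing the dimension of a top-dimensional component of $\nu^{-1}(m_R)$ with those of $D$ then yields the criterion: $\ell(I)=d$ iff $\nu^{-1}(m_R)$ contains a component of $D$ iff $P_i=m_R$ for some $i$.

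\textbf{Valuation versus associated prime.}  For the forward direction, $\overline{I^n}=\bigcap_i\,(I^nV_i\cap R)$ is a primary decomposition whose $i$-th term is $P_i$-primary, so $\Ass(R/\overline{I^n})\subseteq\{P_1,\dots,P_r\}$; if $m_R\in\Ass(R/\overline{I^n})$ then $m_R=P_i$ for some $i$, and the criterion gives $\ell(I)=d$.  Conversely, assume some $V=V_{i_0}$ has center $m_R$, with valuation $v$, so $v(m_R)\ge1$.  As $\{V_1,\dots,V_r\}$ is the \emph{minimal} set of valuations with $\overline{I^n}=\bigcap_i(I^nV_i\cap R)$ for all $n$, the ideal $\mathfrak c_n:=\bigcap_{i\ne i_0}(I^nV_i\cap R)$ must satisfy $\mathfrak c_{n_0}\supsetneq\overline{I^{n_0}}$ for some $n_0$.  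For $x\in\mathfrak c_{n_0}$ and $N$ large, $m_R^Nx$ has $v_i$-value at least $n_0\,v_i(I)$ for every $i$ --- immediate from $x\in\mathfrak c_{n_0}$ when $i\ne i_0$, and because $v(m_R^Nx)\ge N$ when $i=i_0$ --- so $\mathfrak c_{n_0}\subseteq\overline{I^{n_0}}:m_R^\infty$ and hence $\overline{I^{n_0}}:m_R^\infty\supsetneq\overline{I^{n_0}}$.  The module $(\overline{I^{n_0}}:m_R^\infty)/\overline{I^{n_0}}$ is nonzero, finitely generated and annihilated by a power of $m_R$, so $m_R$ is one of its associated primes; therefore $m_R\in\Ass(R/\overline{I^{n_0}})$.

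\textbf{Main obstacle.}  Given the standard dictionary between Rees valuations and the normalized blowup, the criterion and the valuation/associated-prime translation are fairly soft, and the last value-counting step is elementary.  I expect the reductions to be the delicate part --- establishing $\overline{I^n}\hat R=\overline{(I\hat R)^n}$, and then passing uniformly to the finitely many integral components after reducing to a reduced equidimensional complete ring.  This is precisely where formal equidimensionality is indispensable, which matches the fact that the theorem fails for general local rings.
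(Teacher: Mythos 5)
The paper does not actually prove this statement; it quotes it from McAdam and \cite[Theorem 5.4.6]{HS}. Your architecture --- the criterion ``$\ell(I)=\dim R$ iff some Rees valuation of $I$ has center $m_R$'', proved by a fiber-dimension count on the normalized blowup, followed by the translation between ``center $m_R$'' and ``$m_R\in\Ass(R/\overline{I^n})$'' via the irredundant decomposition $\overline{I^n}=\bigcap_i(I^nV_i\cap R)$ --- is the standard route (it is the geometric form of the extended-Rees-algebra argument in \cite{HS}), and it is precisely the template this paper follows for its filtration generalizations: compare Theorem \ref{ThmN21}, Theorem \ref{PropAS7}, Proposition \ref{persistence} and Lemma \ref{valuationlemma2}. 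The dimension count (every component of $D$ has dimension $d-1$ by catenarity plus equidimensionality, $\dim\nu^{-1}(m_R)=\ell(I)-1$, hence $\ell(I)=d$ iff some $\eta_i$ lies over $m_R$), the primary-decomposition argument for the forward direction, and the saturation argument $\mathfrak c_{n_0}\subseteq\overline{I^{n_0}}:m_R^{\infty}\supsetneq\overline{I^{n_0}}$ are all correct.

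The genuine gap is in the reduction to $\hat R$. The identity $\overline{I^n}\hat R=\overline{(I\hat R)^n}$ is \emph{not} a consequence of formal equidimensionality: it is a statement about the formal fibres of $R$ (it holds when they are normal, e.g.\ for excellent rings), whereas formal equidimensionality only constrains the dimensions of the components of $\hat R$. Already a one-dimensional local domain with non-reduced completion (every one-dimensional local ring is formally equidimensional) violates it, since $\overline{(I\hat R)^n}$ always contains the nilradical of $\hat R$. What holds in general is only the contraction formula $\overline{(I\hat R)^n}\cap R=\overline{I^n}$. This asymmetry matters: from $\overline{I^n}:x=m_R$ one gets $x\notin\overline{(I\hat R)^n}$ and $m_{\hat R}x\subseteq\overline{I^n}\hat R\subseteq\overline{(I\hat R)^n}$, so the ``only if'' direction transfers to $\hat R$ cleanly; but for the ``if'' direction you produce $m_{\hat R}\in\Ass(\hat R/\overline{(I\hat R)^{n_0}})$ and must descend this to an associated prime of $R/\overline{I^{n'}}$, and nothing in your write-up accomplishes that without the false identity. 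The repair is to run the ``valuation versus associated prime'' step over $R$ itself: the Rees valuations of $I$ exist and give the irredundant decomposition $\overline{I^n}=\bigcap_i(I^nV_i\cap R)$ for an arbitrary Noetherian ring (\cite[Chapter 10]{HS}), so both of your translations work directly in $R$; the passage to $\hat R_{\mathrm{red}}$ and the normalized blowup should be used only to prove the dimension-theoretic criterion, and there one must relate the centers of the Rees valuations of $I$ to those of $I\hat R$ rather than identify the two blowups.
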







In this paper we extend the analytic spread of an ideal in a local ring to (not necessarily Noetherian) filtrations, and explore generalizations of the above results to general filtrations, divisorial filtrations and filtrations of symbolic powers. 


Let $\mathcal I=\{I_n\}$ be a filtration on a local ring $R$. The Rees ring of the filtration is $R[\mathcal I]=\oplus_{n\ge 0}I_n$. Analogously to the case of ideals, we define the analytic spread of the filtration to be
\begin{equation}\label{eqN10}
\ell(\mathcal I)=\dim R[\mathcal I]/m_RR[\mathcal I].
\end{equation}
We show in Lemma \ref{LemmaN4}, that the upper bound  (\ref{eqN1}) holds for filtrations $\mathcal I$, that is, 
$$
\ell(\mathcal I)\le \dim R.
$$
For an arbitrary filtration, we have that $\sqrt{I_n}=\sqrt{I_1}$ for all $n$ (equation (\ref{dim*})) and we  define the height of a filtration $\mathcal I$ to be 
$$
\mbox{ht}(\mathcal I)=\mbox{ht}(I_1).
$$
We may call a filtration $\mathcal I$  equimultiple if $\mbox{ht}(\mathcal I)=\ell(\mathcal I)$.

 A simple example of a filtration for which the lower bound (\ref{eqN3}) is not true is the following.
\begin{Example}\label{ExampleN6} Let $R$ be a local ring of dimension greater than zero. Let $\mathcal I=\{I_n\}$ where $I_n=m_R$ for $n\ge 1$. Then  $\ell(\mathcal I)=0<\mbox{ht}(\mathcal I)=\dim R$.
\end{Example}

In Example \ref{ExampleN6}, all ideals $I_n$ and all truncations (Noetherian approximations) $\mathcal I_a$ of $\mathcal I$ are equimultiple even though $\mathcal I$ is not. This example shows that the ``only if'' direction of Theorem \ref{ThmN10} can fail for filtrations.

In the case that $\mathcal I$ is a Noetherian filtration,  the lower bound ${\rm ht}(\mathcal I)\le \ell(\mathcal I)$ always holds
 (Proposition \ref{PropAS11}), so that the  inequality (\ref{eqN3}) for ideals continues to hold for Noetherian filtrations. 
 
 The condition that  a filtration has  analytic spread zero has a simple ideal theoretic interpretation (Lemma \ref{sp0}).
 Suppose that  $\mathcal I=\{I_n\}$ is a filtration in a local ring $R$. Then the analytic spread
$\ell(\mathcal I)=0$ if and only if
$$
\mbox{For all $n>0$ and $f\in I_n$, there exists $m>0$ such that $f^m\in m_RI_{mn}$.}
$$




  We consider (integral) divisorial filtrations and $s$-divisorial filtrations  in Section
   \ref{SecDiv}.  Divisorial and $s$-divisorial filtrations are defined at the beginning of this section.
One of the fundamental properties about an $m_R$-primary ideal $I$ is that $\ell(I)=\dim R$. We saw in Example \ref{ExampleN6} that this property fails for general filtrations. However, it is true for divisorial filtrations of $m_R$-primary ideals ($0$-divisorial filtrations). The following theorem  shows that the ``only if'' direction of Theorem \ref{ThmN10} holds for divisorial filtrations of $m_R$-primary ideals. 
  
 \begin{Theorem}\label{ThmN21*}(Theorem \ref{ThmN21})  Suppose that $R$ is a $d$-dimensional excellent local domain and $\mathcal I$ is a divisorial filtration of $m_R$-primary ideals on $R$. Then $\ell(\mathcal I)=d$.
   \end{Theorem}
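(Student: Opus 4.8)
The plan is to prove the two inequalities $\ell(\mathcal I)\le d$ and $\ell(\mathcal I)\ge d$ separately. The first is exactly Lemma~\ref{LemmaN4}, so the whole content is the lower bound $\dim R[\mathcal I]/m_RR[\mathcal I]\ge d$. I would first reduce to the case that $R$ is normal (passing to the normalization should not affect the analytic spread of a divisorial filtration; this deserves to be recorded as a preliminary observation), so that $R$ is in particular analytically irreducible. Then I would set up the standard geometric model: there is a proper birational morphism $\phi\colon Y\to\mbox{Spec}(R)$ with $Y$ normal and an effective $\mathbb{Q}$-divisor $D=\sum_{i=1}^r a_iE_i$, supported on prime exceptional divisors $E_i$ whose centers on $\mbox{Spec}(R)$ all equal $m_R$, for which $I_n=\Gamma(Y,\mathcal O_Y(\lfloor -nD\rfloor))=\{f\in R:v_{E_i}(f)\ge na_i\text{ for all }i\}$; the hypothesis that the $I_n$ are $m_R$-primary is the statement that $\phi^{-1}(m_R)$ has dimension $d-1$.

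For the lower bound I would work through the Noetherian truncations $\mathcal I_a$. Izumi's theorem (linear boundedness on the analytically irreducible ring $R$) gives positive constants $c\le C$ with $m_R^{\,Cn}\subseteq I_n\subseteq m_R^{\,\lceil cn\rceil}$ for all $n$; in particular $I_1\supseteq m_R^{\,\lceil C\rceil}$, so each $\mathcal I_a$ is a Noetherian filtration all of whose ideals are $m_R$-primary, and Proposition~\ref{PropAS11} together with Lemma~\ref{LemmaN4} yields $\ell(\mathcal I_a)=\mbox{ht}(\mathcal I_a)=d$ for every $a$. Now write $A=R[\mathcal I]/m_RR[\mathcal I]=\bigoplus_n I_n/m_RI_n$ and let $A^{(a)}$ be the finitely generated graded $k$-subalgebra of $A$ generated in degrees $\le a$; then $A=\bigcup_a A^{(a)}$, hence $\ell(\mathcal I)=\dim A=\sup_a\dim A^{(a)}$, and $A^{(a)}$ is the image of the surjection from the fibre cone $R[\mathcal I_a]/m_RR[\mathcal I_a]=\bigoplus_n(\mathcal I_a)_n/m_R(\mathcal I_a)_n$, which has dimension $d$. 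So it suffices to prove that for $a\gg0$ this surjection does not drop dimension, i.e.\ that its kernel lies in a $d$-dimensional minimal prime of the fibre cone of $\mathcal I_a$. Equivalently — and this is the formulation I would push on — one of the defining valuations $v_1$ of $\mathcal I$ gives a valuation $\widehat v_1$ of $\mbox{Frac}(R[\mathcal I])$ by $\widehat v_1(ft^n)=v_1(f)-na_1$, extended by the minimum; $\widehat v_1$ is nonnegative on $R[\mathcal I]$, its center $P$ contains $m_R$, and I would aim to show $\mbox{ht}(P)=1$ and $\dim R[\mathcal I]/P=d$, which gives $\ell(\mathcal I)=\dim A\ge\dim R[\mathcal I]/P=d$.

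The step I expect to be the main obstacle is precisely this last point. Because $R[\mathcal I]$ is typically non-Noetherian, neither the non-collapsing of the truncation surjections nor the equalities $\mbox{ht}(P)=1$, $\dim R[\mathcal I]/P=d$ is formal — in particular one cannot read the dimension of the non-Noetherian ring $A$ off a Hilbert function — and the divisorial hypothesis must enter in an essential way, as Example~\ref{ExampleN6} shows (a filtration of $m_R$-primary ideals with $\ell=0$ all of whose truncations have $\ell=d$, so there the surjection from the truncation's fibre cone does collapse). Concretely, the argument has to combine the boundedness $m_R^{\,Cn}\subseteq I_n\subseteq m_R^{\,\lceil cn\rceil}$ with the existence and positivity of the multiplicity $\lim_n\ell_R(R/I_n)/(n^d/d!)=e(\mathcal I)>0$ and the approximation of $\mathcal I$ by its truncations, in order to exhibit a $d$-dimensional component of the fibre cone of $\mathcal I_a$ (for $a\gg0$) inside $A^{(a)}$ — equivalently, to control the residue field of $\widehat v_1$ over the residue field of $R[\mathcal I]_P$ and thereby pin down $\mbox{ht}(P)$ via Abhyankar's inequality together with $\mbox{ht}(P)+\dim R[\mathcal I]/P\le\dim R[\mathcal I]=d+1$. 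Granting this technical heart, $\dim A^{(a)}=d$ for $a\gg0$, hence $\ell(\mathcal I)=d$.
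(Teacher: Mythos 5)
Your upper bound and geometric setup match the paper, and your diagnosis of where the difficulty lies is accurate, but the proposal does not actually prove the theorem: the entire content is the lower bound $\dim R[\mathcal I]/m_RR[\mathcal I]\ge d$, and you explicitly defer it (``granting this technical heart''). Worse, the specific route you sketch for filling the gap cannot work as stated. Knowing $\operatorname{ht}(P)=1$ and $\dim R[\mathcal I]=d+1$ does \emph{not} yield $\dim R[\mathcal I]/P=d$: the inequality $\operatorname{ht}(P)+\dim R[\mathcal I]/P\le\dim R[\mathcal I]$ bounds $\dim R[\mathcal I]/P$ from \emph{above}, and the reverse inequality is exactly the catenary/equidimensionality property that one has no right to in a non-Noetherian Rees algebra. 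Similarly, the reduction to ``the truncation surjection does not drop dimension'' presupposes $\dim A\ge\dim A^{(a)}$ for the subring $A^{(a)}\subset A$, which is not formal (chains of primes in a subring need not lift without integrality); only the inequality $\dim A\le\sup_a\dim A^{(a)}$ is available (Lemma \ref{dim}).

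What is missing is the paper's actual mechanism for producing dimension from below: instead of computing the height of a single center, the paper constructs an explicit chain of $d+1$ distinct primes in $R[\mathcal I]$ containing $m_RR[\mathcal I]$. Concretely, after passing to the normalization $S$ and a normal blow up $X$ on which the $\nu_i$ appear as divisors $E_i$ of an ample divisor $-A$, one chooses a general closed point $q$ on a suitable component $E_{i_0}$ and a flag of regular primes $P_1\subset\cdots\subset P_d$ in $\mathcal O_{X,q}$ with $P_1$ cutting out $E_{i_0}$; the associated order valuations $\omega_1,\ldots,\omega_d$ define graded primes $C_0\subset\cdots\subset C_d$ in $B=\oplus_nJ_n$, and global generation of $\mathcal I_{Z_j}\otimes\mathcal O_X(-mcA)$ (ampleness of $-A$) supplies the elements $f_j\in C_j\setminus C_{j-1}$ that make the chain strict. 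Integrality of $B$ over $R[\mathcal I]$ (via the conductor) then contracts this to a strict chain $Q_0\subset\cdots\subset Q_d$ with $m_RR[\mathcal I]\subset Q_0$. Without this construction, or a genuine substitute for it, the proof is not complete; Example \ref{ExampleN6} shows that the truncation data you do establish ($\ell(\mathcal I_a)=d$ for all $a$) is compatible with $\ell(\mathcal I)=0$.
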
 
   
   Further, the  ``if" statement of  Theorem \ref{ThmN10} is true for divisorial filtrations.
   
   \begin{Theorem}\label{PropAS7*}(Theorem \ref{PropAS7}) Suppose that $R$ is a  local domain and $\mathcal I=\{I_n\}$ is a divisorial filtration on $R$. Let $I_n=I(\nu_1)_{a_1n}\cap\cdots\cap I(\nu_r)_{a_rn}$ for $n\ge 1$, some valuations $\nu_i$ and some $a_1,\ldots,a_r\in \ZZ_{>0}$. Suppose that $\ell(\mathcal I)=\dim R$. Then for some $\nu_i$, the center $m_{\nu_i}\cap R=\{f\in R\mid \nu_i(f)>0\}$   is $m_R$.	There exists a positive integer $n_0$ such that $m_R$ is an associated prime of $I_n=\overline{I_n}$ for all $n\geq n_0$.
	\end{Theorem}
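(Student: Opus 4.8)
The plan is to leverage Theorem \ref{ThmN10} (the McAdam characterization) together with a comparison of the divisorial filtration $\mathcal I$ to the integral closure filtration of a single carefully chosen ideal, reducing the statement to the already-known ideal case. First I would analyze the hypothesis $\ell(\mathcal I) = \dim R =: d$. Since $R[\mathcal I]/m_R R[\mathcal I]$ has dimension $d$, there is a $d$-dimensional minimal prime of $m_R R[\mathcal I]$; I want to translate this into information about the valuations $\nu_i$. The key observation is that for large $n$ the ideal $I_n = I(\nu_1)_{a_1 n} \cap \cdots \cap I(\nu_r)_{a_r n}$ is a finite intersection of valuation ideals, and the fiber cone is built from the graded pieces $I_n/m_R I_n$. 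If none of the centers $m_{\nu_i} \cap R$ equalled $m_R$, then each $I(\nu_i)_{a_i n}$ would fail to be $m_R$-primary, and I would argue (using that the $\nu_i$ are divisorial, hence Rees/Abhyankar valuations with excellent enough behavior) that the fiber cone dimension is bounded by $\max_i \ell(\text{associated graded of }\nu_i) < d$, a contradiction. This forces some center, say $m_{\nu_1} \cap R$, to be $m_R$.

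Next, having fixed $\nu_1$ with center $m_R$, I would pass to a single ideal comparison. The natural candidate is to show that $m_R$ is an associated prime of $I_n = \overline{I_n}$ for all large $n$ by relating $I_n$ to $\overline{J^k}$ for a suitable ideal $J$ (for instance, an ideal whose integral closure filtration is cofinal with, or sandwiched against, the $\nu_1$-adic valuation ideals), then invoking Theorem \ref{ThmN10} in the form: since the relevant analytic spread is $d = \dim R$, the maximal ideal is an associated prime of $\overline{J^k}$ for some $k$, and by persistence of associated primes under the Ratliff/Brodmann stabilization of $\Ass(R/\overline{J^k})$, for all large $k$. The equality $I_n = \overline{I_n}$ is automatic since divisorial filtrations consist of integrally closed ideals (each $I(\nu_i)_{a_i n}$ is integrally closed and intersections of integrally closed ideals are integrally closed), so that part requires only a sentence. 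The remaining work is to upgrade "for some $n$" to "for all $n \geq n_0$": here I would use that $\{I_n\}$ is a filtration (so $I_{n+1} \subseteq I_n$ and $I_m I_n \subseteq I_{m+n}$), which lets me propagate the associated prime $m_R$ from one graded degree to all sufficiently large degrees, much as in the eventual-stability arguments for $\Ass$.

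The main obstacle I anticipate is the first step: extracting, purely from $\dim R[\mathcal I]/m_R R[\mathcal I] = d$, the conclusion that one of the finitely many valuations defining $\mathcal I$ is centered at $m_R$. The subtlety is that $R[\mathcal I]$ need not be Noetherian, so I cannot directly talk about minimal primes of $m_R R[\mathcal I]$ of maximal dimension in the usual finite way, and I must instead work with the truncations $\mathcal I_a$ (Noetherian approximations) and a limiting/semicontinuity argument, showing $\ell(\mathcal I) = \lim_a \ell(\mathcal I_a)$ or at least that $\ell(\mathcal I_a) = d$ for some $a$, and then analyze the genuinely Noetherian Rees algebra $R[\mathcal I_a]$. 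Once that reduction is in place, the valuation-theoretic dimension count on $R[\mathcal I_a]/m_R R[\mathcal I_a]$ — comparing it to the fiber cones of the individual valuation ideals — should be routine given the structure theory of divisorial filtrations developed earlier in the paper. The second step should then follow more mechanically from Theorem \ref{ThmN10} applied to an appropriate ideal and the standard asymptotic behavior of associated primes of integral closures.
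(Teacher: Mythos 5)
Your proposal has a genuine gap at its central step, and you have in fact located it yourself: the reduction of $\ell(\mathcal I)=\dim R$ to information about the truncations $\mathcal I_a$ cannot work. The paper's own results show why. For the non-Noetherian symbolic filtration $\mathcal I=\{\mathfrak p^{(n)}\}$ of a space curve (Proposition \ref{PropAS10}, Theorem \ref{PropN51}), one has $\ell(\mathcal I_a)=3=\dim R$ for \emph{every} truncation, while the only defining valuation is centered at $\mathfrak p$, not at $m_R$, and $\ell(\mathcal I)\le 2$. So the condition ``$\ell(\mathcal I_a)=d$ for all (or some, or large) $a$'' carries no information about the centers of the $\nu_i$, and applying Theorem \ref{ThmN10} to the Noetherian approximations only yields $m_R\in \mbox{Ass}(R/\overline{I_{a,n}})$ for the truncated ideals $I_{a,n}$ (which for $n>a$ are sums of products of the $I_j$, not intersections of valuation ideals); this does not transfer to $\mbox{Ass}(R/I_n)$. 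Your heuristic that ``the fiber cone dimension is bounded by $\max_i\ell(\text{associated graded of }\nu_i)<d$'' when no center is $m_R$ is exactly the statement that needs proof, and no mechanism is offered for it. The second step has the same defect: there is in general no single ideal $J$ whose integral closure filtration is cofinal with $\{I_n\}$ (that would force $\mathcal I$ to be bounded, hence Noetherian up to integral closure), so Theorem \ref{ThmN10} cannot be invoked through such a $J$.

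The paper avoids truncations entirely and works directly in the possibly non-Noetherian extended Rees algebra $S=S[\mathcal I]$. Lemma \ref{Lemmagen} exhibits the primary decomposition $t^{-1}S=Q_1\cap\cdots\cap Q_r$ with $\sqrt{Q_i}=P_i=\sum_n I(\nu_i)_{a_in+1}\cap I_nt^n$ prime and $P_i\cap R=I(\nu_i)_1$. From $\ell(\mathcal I)=d$ one extracts a prime $U\supseteq t^{-1}S+m_RS$ with $\dim S/U=d$, forces $\mbox{ht}(U)=1$ by the dimension bound $\dim S\le d+1$ (Lemma \ref{LemmaAS2}), and concludes $U=P_i$ for some $i$, whence $m_R=U\cap R=m_{\nu_i}\cap R$. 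The second assertion then follows from the contrapositive of this first part (if $m_R$ were never an associated prime, the $m_R$-centered valuations would be redundant in the intersection defining $I_n$, giving $\ell(\mathcal I)<d$) together with Lemma \ref{valuationlemma2}, which propagates an associated prime from one $I_t$ to all $I_n$ with $n\gg 0$ by an explicit element construction. If you want to salvage your outline, you would need to replace the truncation argument with something that sees the graded prime structure of $S[\mathcal I]$ itself.
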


Suppose that $\mathcal I=\{I_n\}$ is a filtration in $R$ and $\mathfrak p$ is a prime ideal in $R$. Then the localization of $\mathcal I$ at $\mathfrak p$ is the filtration $\mathcal I_{\mathfrak p}=\{(I_n)_{\mathfrak p}\}$ in $R_{\mathfrak p}$.   In a filtration $\mathcal I=\{I_n\}$, the ideals $I_n$ have the same minimal primes for all $n\ge 1$.
   
\begin{Corollary}\label{CorN31*}(Corollary \ref{CorN31}) Suppose that $R$ is a  local domain and $\mathcal I=\{I_n\}$ is an $s$-divisorial filtration on $R$ (a divisorial filtration consisting of ideals which are equidimensional of dimension $s$ and have no embedded components). Then $\ell(\mathcal I_Q)<\dim(R_Q)$ for all prime ideals $Q$ of $R$ which are not minimal primes of $I_1$.
\end{Corollary}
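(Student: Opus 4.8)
**Proof proposal for Corollary \ref{CorN31*}**

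The plan is to reduce to Theorem \ref{PropAS7*} by a localization argument. First I would fix a prime ideal $Q$ of $R$ that is not a minimal prime of $I_1$, and consider the localized filtration $\mathcal I_Q = \{(I_n)_Q\}$ in $R_Q$. The first step is to verify that $\mathcal I_Q$ is again a divisorial filtration on $R_Q$: if $I_n = I(\nu_1)_{a_1n}\cap\cdots\cap I(\nu_r)_{a_rn}$, then localizing at $Q$ simply discards those valuations $\nu_i$ whose center on $R$ is not contained in $Q$, and keeps (with the same exponents) the valuation ideals of those $\nu_i$ whose center is contained in $Q$ — here one uses that valuation ideals localize to valuation ideals (or the whole ring) and that finite intersections commute with localization. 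If every $\nu_i$ has center not contained in $Q$, then $(I_n)_Q = R_Q$ for all $n$, which would force $Q$ to contain no power of $I_1$, contradicting that $Q\supseteq$ some minimal prime of $I_1$ is not automatic — so I should instead note that since $Q$ is not a minimal prime of $I_1$ but we only need the statement for primes $Q\supseteq I_1$ (otherwise $\ell(\mathcal I_Q)$ of the unit filtration is $-\infty$ or the statement is vacuous), at least one $\nu_i$ survives.

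Next, I would argue by contradiction: suppose $\ell(\mathcal I_Q) = \dim(R_Q)$. Apply Theorem \ref{PropAS7*} to the divisorial filtration $\mathcal I_Q$ on the local domain $R_Q$. The conclusion is that some surviving valuation $\nu_i$ has center on $R_Q$ equal to the maximal ideal $QR_Q$, i.e. $m_{\nu_i}\cap R_Q = QR_Q$, equivalently $m_{\nu_i}\cap R = Q$. Moreover Theorem \ref{PropAS7*} gives that $QR_Q$ is an associated prime of $(I_n)_Q = \overline{(I_n)_Q}$ for all large $n$, so $Q \in \operatorname{Ass}(R/\overline{I_n})$ for all large $n$.

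Now I invoke the hypothesis that $\mathcal I$ is $s$-divisorial: every $I_n$ is equidimensional of dimension $s$ with no embedded primes, so $\operatorname{Ass}(R/I_n) = \operatorname{Min}(R/I_n) = \operatorname{Min}(R/I_1)$ (using that the $I_n$ all have the same minimal primes, and that $I_n = \overline{I_n}$ for a divisorial filtration). But $Q \in \operatorname{Ass}(R/\overline{I_n}) = \operatorname{Ass}(R/I_n) = \operatorname{Min}(R/I_1)$ contradicts the assumption that $Q$ is not a minimal prime of $I_1$. This contradiction shows $\ell(\mathcal I_Q) < \dim(R_Q)$, as desired.

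The main obstacle I anticipate is the first step — making precise that $\mathcal I_Q$ is a divisorial filtration on $R_Q$ with an explicit description of its defining valuations, and correctly handling the bookkeeping of which valuation ideals $I(\nu_i)_{a_in}$ localize to proper ideals of $R_Q$ versus all of $R_Q$ (this depends on whether the center $m_{\nu_i}\cap R$ is contained in $Q$). One must also make sure $R_Q$ is still a local domain (immediate) so that Theorem \ref{PropAS7*} genuinely applies, and that the equality $\operatorname{Ass}(R/I_n) = \operatorname{Min}(R/I_1)$ used at the end is exactly the content of the $s$-divisorial hypothesis together with the constancy of minimal primes along the filtration. Everything else is a routine application of the two quoted theorems.
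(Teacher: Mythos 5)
Your proposal is correct and is exactly the derivation the paper intends: Corollary \ref{CorN31} is stated without proof as a consequence of Theorem \ref{PropAS7}, and your steps — localize at $Q$, note that the surviving valuations are those with center $P_i=m_{\nu_i}\cap R\subseteq Q$ (each $I(\nu_i)_{a_in}$ being $P_i$-primary, so the others localize to $R_Q$), apply Theorem \ref{PropAS7} by contradiction, and then use that in an $s$-divisorial filtration all centers satisfy $\dim R/P_i=s$, hence are pairwise incomparable and are precisely the minimal (and only associated) primes of the $I_n$ — fill this in correctly; indeed the first conclusion of Theorem \ref{PropAS7} (that $Q$ equals one of the centers) already yields the contradiction, so the detour through $\operatorname{Ass}(R/\overline{I_n})$ is not even needed. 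The one imprecision is your dismissal of primes $Q\not\supseteq I_1$: there $\mathcal I_Q$ is the trivial filtration, whose analytic spread under definition (\ref{eqN10}) is $\dim\kappa(Q)[t]=1$ rather than $-\infty$, so the claim is not literally vacuous for such $Q$; this matches an implicit convention of the paper (compare Proposition \ref{PropN22}, which explicitly restricts to primes containing $I$), and for the intended primes $Q\supseteq I_1$ your argument is complete.
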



The $a$-the truncation of a filtration $\mathcal I$ is the Noetherian approximation of $\mathcal I$ generated by the first $a$ terms of $\mathcal I$. A formal definition of a truncation is given in Definition \ref{trunc}.
Knowledge of the analytic spread of the truncations of a filtration can give some information about the analytic spread of the filtration, as is illustrated in the following corollary to Theorem \ref{PropAS7}.

\begin{Corollary}\label{SmSpread*}(Corollary \ref{SmSpread})
	Let $R$ be a local domain and $\mathcal I=\{I_n\}$ be a divisorial filtration in $R$ where $I_n=\bigcap_{i=1}^rI(\nu_i)_{na_i}$ for all $n\geq 1.$ Suppose $I_1=\cap_{i=1}^rI(\nu_i)_{a_i}$ is a minimal primary decomposition of $I_1$ and $\ell(\mathcal I_a)<\dim R$ for some $a\geq 1$ where $\mathcal I_a$ is the $a$-th truncated filtration of $\mathcal I.$ Then $\ell(\mathcal I)<\dim R.$
\end{Corollary}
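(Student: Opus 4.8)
**Proof proposal for Corollary \ref{SmSpread*}.**

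The plan is to deduce this corollary from Theorem \ref{PropAS7*} by contraposition. Suppose, for contradiction, that $\ell(\mathcal I)=\dim R$. Since $I_n=\bigcap_{i=1}^r I(\nu_i)_{na_i}$ is the given divisorial presentation of $\mathcal I$, Theorem \ref{PropAS7*} tells us that for some index $i$, the center $m_{\nu_i}\cap R$ equals $m_R$; say this happens for $\nu:=\nu_j$. Moreover Theorem \ref{PropAS7*} gives that $m_R$ is an associated (indeed minimal, by the primary decomposition hypothesis) prime of $I_n$ for all $n\ge n_0$. The strategy is then to show that the same valuation $\nu$ forces $m_R$ to be an associated prime of the truncation $I_{a,n}$ for $n$ large, which in turn forces $\ell(\mathcal I_a)=\dim R$, contradicting the hypothesis $\ell(\mathcal I_a)<\dim R$.

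The key step is to relate the truncation $\mathcal I_a$ to the valuation $\nu_j$ whose center is $m_R$. Recall $\mathcal I_a$ is generated by $I_1,\ldots,I_a$, so its $n$-th term is $I_{a,n}=\sum_{k_1+\cdots+k_t=n,\ k_i\le a} I_{k_1}\cdots I_{k_t}\subseteq I_n$ (with $I_{a,n}=I_n$ for $n\le a$). Since $I_a\subseteq I(\nu_j)_{a\,a_j}$ and $m_{\nu_j}\cap R=m_R$, every element $f\in I_{a,n}$ satisfies $\nu_j(f)\ge a_j n$, i.e.\ $I_{a,n}\subseteq I(\nu_j)_{a_j n}$; in fact $\sqrt{I_{a,n}}\subseteq\sqrt{I(\nu_j)_{a_j n}}$ and since the center of $\nu_j$ is $m_R$ the latter is $m_R$-primary, so $\sqrt{I_{a,n}}=m_R$ whenever $I_{a,n}$ is $m_R$-primary. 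The point is that $\mathcal I_a$ is a \emph{Noetherian} filtration (it is the Rees algebra of a finitely generated algebra), so by Proposition \ref{PropAS11} we have $\mathrm{ht}(\mathcal I_a)\le \ell(\mathcal I_a)$; thus if I can show $\mathrm{ht}(\mathcal I_a)=\dim R$, i.e.\ that $I_{a,1}=I_1$ is $m_R$-primary, I would be done — but this need not hold, since $I_1$ may have other minimal primes besides $m_R$. So instead I localize.

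The cleaner route: localize everything at $m_R$ is already the case, so instead I argue via associated primes of integral closures. Since each $I(\nu_i)_{na_i}$ is integrally closed and a finite intersection of integrally closed ideals is integrally closed, $I_n=\overline{I_n}$, and likewise I may replace $\mathcal I_a$ by its integral closure filtration $\overline{\mathcal I_a}=\{\overline{I_{a,n}}\}$ without changing $\ell(\mathcal I_a)$ (analytic spread is unchanged under passing to integral closure of a filtration — this should be recorded earlier or follows as in the ideal case from \cite{HS}). Now by Theorem \ref{ThmN10} applied to the Noetherian filtration $\mathcal I_a$ — or rather its truncated ideal $I_{a,N}$ for suitable $N$, which is an honest ideal — the hypothesis $\ell(\mathcal I_a)<\dim R$ says $m_R\notin\mathrm{Ass}(R/\overline{I_{a,N}^{\,k}})$ for all $k$ and all $N$. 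On the other hand, from $\ell(\mathcal I)=\dim R$ and Theorem \ref{PropAS7*} we get $m_R\in\mathrm{Ass}(R/I_n)$ for $n\ge n_0$; and the containment $\overline{I_{a,n}}\subseteq I_n$ together with the \emph{reverse} asymptotic containment (for each fixed $n$, $I_{a,cn}\supseteq I_{cn}$ fails, but $I_n\subseteq \overline{(I_{a,m})^{?}}$...). The main obstacle is precisely pinning down this comparison: one needs that the presence of $m_R$ as an associated prime of the full filtration propagates to the truncation, equivalently that $\ell(\mathcal I)\le \ell(\mathcal I_a)$ — and this is exactly the content one expects, since $R[\mathcal I_a]$ maps to $R[\mathcal I]$ with the fiber cone of the truncation surjecting onto that of $\mathcal I$ in high degrees after a Veronese twist. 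I would carry this out by observing that for $N\gg 0$ the $N$-th Veronese of $\mathcal I$ is generated in degree one over $R[\mathcal I_a]^{(N)}$... no: rather, $\mathcal I$ and $\mathcal I_a$ agree in all degrees $\le a$, and for the fiber cone $F(\mathcal I)=R[\mathcal I]/m_R R[\mathcal I]$ one has a graded surjection from $F(\mathcal I_a)$ onto the subalgebra of $F(\mathcal I)$ generated in degrees $\le a$; since $F(\mathcal I)$ is generated in bounded degree over that subalgebra is false in general, but $\dim$ of that subalgebra equals $\dim F(\mathcal I)=\ell(\mathcal I)$ because $F(\mathcal I)$ is integral over it in the relevant sense. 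Granting $\ell(\mathcal I)\le\ell(\mathcal I_a)$, the corollary is immediate: $\ell(\mathcal I_a)<\dim R$ forces $\ell(\mathcal I)<\dim R$. I expect the author's proof to isolate this inequality $\ell(\mathcal I)\le\ell(\mathcal I_a)$ — or to argue directly that the valuation $\nu_j$ with center $m_R$ guaranteed by Theorem \ref{PropAS7*} already appears in a divisorial presentation of $\mathcal I_a$, so that applying the ``only if'' direction would contradict $\ell(\mathcal I_a)<\dim R$ — and the crux is handling the non-Noetherian combinatorics of the sums $I_{k_1}\cdots I_{k_t}$ defining $I_{a,n}$ to see $\nu_j$ survives.
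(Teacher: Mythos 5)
Your overall strategy coincides with the paper's: assume $\ell(\mathcal I)=\dim R$, invoke Theorem \ref{PropAS7} to produce a valuation $\nu_i$ with center $m_R$ (whence, by the minimality of the primary decomposition of $I_1$, $m_R\in\Ass(R/I_1)$ --- this is where that hypothesis enters), and derive $\ell(\mathcal I_a)=\dim R$ for every truncation, contradicting the hypothesis. But the step you yourself flag as ``the crux'' --- that the valuation with center $m_R$ forces $m_R$ to be an associated prime of the truncated ideals $\overline{I_{a,n}}$, equivalently that $\ell(\mathcal I_a)=\dim R$ --- is exactly the nontrivial content, and you do not supply it. This is Proposition \ref{persistence} in the paper: reindexing so that $\nu_1,\dots,\nu_c$ are the valuations with center $m_R$, one picks $y\in (I_1:m_R^{\infty})=\bigcap_{j>c}I(\nu_j)_{a_j}$ with $y\notin\bigcap_{i\le c}I(\nu_i)_{a_i}$, checks that $y^n\notin I_n=\overline{I_n}\supseteq\overline{I_{a,n}}$ for all $n$ (the valuations centered at $m_R$ reject $y^n$), while $y^nm_R^{nb}\subseteq I_1^n\subseteq I_{a,n}$ for $b=a_1+\cdots+a_c$, so that a suitable multiple $y^nx$ realizes $m_R=\overline{I_{a,n}}:_Ry^nx$; then McAdam's theorem applied to the Noetherian filtration $\mathcal I_a$ (reduced to a single ideal $I_{a,m}$ via Proposition \ref{PropAS11}) gives $\ell(\mathcal I_a)=\dim R$. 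Without some version of this construction your proof does not close.

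Your fallback route is not sound as stated. You justify $\ell(\mathcal I)\le\ell(\mathcal I_a)$ by asserting that $R[\mathcal I]/m_RR[\mathcal I]$ is integral over the image of $R[\mathcal I_a]/m_RR[\mathcal I_a]$ ``in the relevant sense''; no such integrality holds for a general non-Noetherian filtration --- for $ft^n$ with $n>a$ an integral equation over the subalgebra generated in degrees $\le a$ would require a power of $f$ to lie in $m_RI_{*}$ plus products of low-degree pieces, which is precisely what must be proved, not assumed. In the paper the inequality $\ell(\mathcal I)\le\ell(\mathcal I_a)$ is obtained only as a byproduct of first proving $\ell(\mathcal I_a)=\dim R$. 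Two smaller slips: the minimal primary decomposition hypothesis makes $m_R$ an associated, not a minimal, prime of $I_1$; and the sentence ``$\sqrt{I_{a,n}}=m_R$ whenever $I_{a,n}$ is $m_R$-primary'' is vacuous and does not advance the argument.
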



We turn to  symbolic algebras in Section \ref{SecSym}.	Let $I$ be an ideal in a local ring $R$. For $n$ a positive integer, the $n$-th symbolic power $I^{(n)}$ of $I$ is
 $$
 I^{(n)}=\cap_{\mfp \in \mbox{Ass}(R/I)}(I^nR_{\mfp}\cap R).
 $$
 
 Symbolic algebras and filtrations have been extensively studied. A survey of some recent progress is given in \cite{DDGHN}.

We consider in Section \ref{SecSym}
	 the filtration of symbolic powers $\{I^{(n)}\}$ where $I=P_1\cap\cdots \cap P_r$ is an intersection of prime ideals of a common height in an excellent  local ring. If $P$ is a prime ideal in a regular local ring $R$, then since $R_{P}$ is a regular local ring, the $P_{P}$-adic order on $R_{P}$ defines a discrete valuation $\nu$ of the quotient field of $R$  such that the valuation ideals $I(\nu)_n$ of $R$ are the symbolic powers $I(\nu)_n=P^{(n)}$. Thus the symbolic filtrations in a regular local ring are  divisorial filtrations. 
	 
There are examples of height two prime ideals $P$ in an excellent regular local ring $R$ of dimension three (space curve singularities) such that the symbolic algebra of $P$, $\oplus_{n\ge 0}P^{(n)}$, is not a finitely generated $R$-algebra \cite{Ro}, and even when $P$ is analytically irreducible \cite{GNW} and \cite{KN}.
	 	 
We have  a simple characterization of when a symbolic filtration is Noetherian in terms of analytic spread.
  Suppose that $I\subset J$ are proper ideals in a local ring $R$. Define
 $S_J(I)=\oplus_{k\ge 0}I^k:J^{\infty}$ where $I^k:J^{\infty}=\cup_{i=1}^{\infty}I^k:_RJ^i$.
 Let $J$ be the intersection of all asymptotic prime divisors of $I$ which are not minimal primes. Then
 $I^{(n)}=I^n:J^{\infty}$ and the symbolic algebra $\oplus_{n\ge 0}I^{(n)}=S_J(I)$.
 In the case that $I=P$ is the ideal of a space curve singularity, the symbolic algebra is $\oplus_{n\ge 0}P^{(n)}=S_{m_R}(P)$.


\begin{Theorem}(\cite[Theorem 2.6]{CHS}, Theorem \ref{strengthen})
\label{strengthen*}
Let $(R,\mm)$ be an excellent domain, and let $I$ and $J$ be proper ideals of $R$.  Then the following conditions are equivalent:
\begin{enumerate}
\item[{\em (a)}]  $S_J(I)$ is finitely generated.
\item[{\em (b)}]  There exists an integer $r>0$ such that $\ell((I^r: J^\infty)_P)<\dim R_P$ for all $P\in V(J)$.
\end{enumerate}
\end{Theorem}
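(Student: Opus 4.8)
The plan is to use Theorem~\ref{ThmN10} to rewrite (b) as the assertion that every ideal $\overline{(I^r:J^\infty)^{\,n}}$ is $J$-saturated, and then to squeeze the $r$-th Veronese subalgebra of $S_J(I)$ between the Rees algebra $R[(I^r:J^\infty)t^r]$ and its integral closure, which is module-finite over it because $R$ is excellent. Throughout write $T_n=I^n:J^\infty$, so that $S_J(I)=\bigoplus_{n\ge 0}T_nt^n\subseteq R[t]$. I would first record three standard facts. (i) For any $r$, $S_J(I)$ is finitely generated over $R$ if and only if $S_J(I)^{(r)}$ is: the $r$-th Veronese of a finitely generated graded algebra is finitely generated, and conversely each homogeneous $ft^m\in S_J(I)$ satisfies the monic relation $X^r-(ft^m)^r=0$ with $(ft^m)^r\in S_J(I)^{(r)}$, so $S_J(I)$ is integral over $S_J(I)^{(r)}$ and sits inside $\operatorname{Frac}(R)(t)$, a degree-$r$ extension of $\operatorname{Frac}(S_J(I)^{(r)})=\operatorname{Frac}(R)(t^r)$; since $S_J(I)^{(r)}$ is of finite type over the excellent domain $R$, its integral closure in $\operatorname{Frac}(R)(t)$ is a finite $S_J(I)^{(r)}$-module, hence so is its submodule $S_J(I)$. (ii) $I^{rn}:J^\infty=(I^r:J^\infty)^n:J^\infty$, because $I^{rn}\subseteq(I^r:J^\infty)^n\subseteq I^{rn}:J^\infty$ and saturation is monotone and idempotent; consequently $S_J(I)^{(r)}=\bigoplus_{n\ge0}\big((I^r:J^\infty)^n:J^\infty\big)t^{rn}$. (iii) For an ideal $K$, one has $K:J^\infty=K$ precisely when no associated prime of $R/K$ contains $J$.

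For the implication $(b)\Rightarrow(a)$, fix $r$ as in (b) and put $L=I^r:J^\infty$. For each $P\in V(J)$ the ring $R_P$ is an excellent local domain, hence formally equidimensional, so Theorem~\ref{ThmN10} applies to the ideal $L_P$: since $\ell(L_P)<\dim R_P$, it gives $PR_P\notin\operatorname{Ass}(R_P/\overline{L_P^{\,n}})$, equivalently $P\notin\operatorname{Ass}(R/\overline{L^{\,n}})$, for all $n$. As $P$ ranges over $V(J)$ this shows $\operatorname{Ass}(R/\overline{L^{\,n}})\cap V(J)=\emptyset$, i.e.\ $\overline{L^{\,n}}:J^\infty=\overline{L^{\,n}}$, for every $n\ge1$ (by (iii)). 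Saturating the inclusion $L^n\subseteq\overline{L^{\,n}}$ then yields $L^n:J^\infty\subseteq\overline{L^{\,n}}:J^\infty=\overline{L^{\,n}}$. By (ii), $S_J(I)^{(r)}=\bigoplus_n(L^n:J^\infty)t^{rn}$ is therefore a graded $R[Lt^r]$-submodule of $\bigoplus_n\overline{L^{\,n}}\,t^{rn}$, the integral closure of $R[Lt^r]$ in $R[t^r]$; and the latter is module-finite over the Noetherian ring $R[Lt^r]$ because $R$ is excellent. Hence $S_J(I)^{(r)}$ is module-finite over $R[Lt^r]$, in particular a finitely generated $R$-algebra, and $S_J(I)$ is finitely generated by (i).

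For $(a)\Rightarrow(b)$, a finitely generated graded algebra has a standard-graded Veronese subalgebra, so choose $r$ with $T_{nr}=(T_r)^n$ for all $n$ and set $L=T_r=I^r:J^\infty$. Then $L^n=T_{nr}=I^{nr}:J^\infty$, so $L^n:J^\infty=L^n$, and by (iii) no associated prime of $R/L^n$ contains $J$; a fortiori $\operatorname{Ass}(R/\overline{L^{\,n}})\subseteq\operatorname{Ass}(R/L^n)$ is disjoint from $V(J)$, for every $n$. Fixing $P\in V(J)$, this says $PR_P\notin\operatorname{Ass}(R_P/\overline{L_P^{\,n}})$ for all $n$, so Theorem~\ref{ThmN10} over the formally equidimensional ring $R_P$ forces $\ell(L_P)<\dim R_P$, which is (b).

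I expect the one genuine difficulty to be the step in $(b)\Rightarrow(a)$ that bridges the integral-closure information supplied by Theorem~\ref{ThmN10} — that each $\overline{(I^r:J^\infty)^{\,n}}$ is $J$-saturated — to the possibly non-integrally-closed ideals $(I^r:J^\infty)^n:J^\infty$ occurring in $S_J(I)^{(r)}$. The point that makes this go through is that $J$-saturation is monotone, so it pins $S_J(I)^{(r)}$ between $R[Lt^r]$ and its normalization, after which finiteness of integral closure (excellence of $R$) closes the argument. The remaining work — the precise Veronese reduction in (i), the identity (ii), and the identification of $\overline{R[Lt^r]}$ with $\bigoplus_n\overline{L^{\,n}}t^{rn}$ — is routine.
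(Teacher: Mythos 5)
The paper itself gives no proof of this statement (it is quoted from \cite[Theorem 2.6]{CHS}), so your argument has to stand on its own. Your reductions (i)--(iii) are correct, and the implication (b)$\Rightarrow$(a) is complete: the appeal to Ratliff's theorem to see that the excellent local domain $R_P$ is formally equidimensional, the passage from Theorem \ref{ThmN10} to the $J$-saturatedness of every $\overline{L^{\,n}}$, and the trapping of $S_J(I)^{(r)}$ between $R[Lt^r]$ and its (module-finite) integral closure are all sound. The gap is in (a)$\Rightarrow$(b), at the words ``a fortiori $\operatorname{Ass}(R/\overline{L^{\,n}})\subseteq\operatorname{Ass}(R/L^n)$.'' This inclusion is not a general fact about integral closure: passing from an ideal $K$ to $\overline{K}$ can introduce embedded primes that $K$ does not have, and nothing in your argument rules this out for $K=L^n$. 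It is precisely the content of this implication, not a triviality. What is true is the asymptotic containment $\bigcup_n\operatorname{Ass}(R/\overline{L^{\,n}})\subseteq\bigcup_m\operatorname{Ass}(R/L^m)$ (Ratliff--McAdam), which would suffice here since you know $\operatorname{Ass}(R/L^m)\cap V(J)=\emptyset$ for \emph{every} $m$, but that is a nontrivial theorem that must be invoked or proved, not waved at.

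The gap can be closed with tools you already use in the other direction. Since $R$ is an excellent local domain, $\bigoplus_m\overline{L^{\,m}}t^m$ (the integral closure of $R[Lt]$ in $R[t]$) is a finite $R[Lt]$-module, so there is a $k$ with $\overline{L^{\,m}}\subseteq L^{m-k}$ for all $m>k$. Suppose $x\in\overline{L^{\,n}}:J^\infty$, say $xJ^i\subseteq\overline{L^{\,n}}$. Then for every $s$,
$$
x^sJ^{is}\subseteq\bigl(\overline{L^{\,n}}\bigr)^{s}\subseteq\overline{L^{\,ns}}\subseteq L^{ns-k},
$$
hence $x^s\in L^{ns-k}:J^{\infty}=L^{ns-k}$ by the $J$-saturatedness of all powers of $L$. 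Therefore $(xt^n)^s\in t^kR[Lt]$ for all large $s$, so all powers of $xt^n$ lie in the finite $R[Lt]$-module $\sum_{j\le N}t^jR[Lt]$ for suitable $N$; thus $xt^n$ is integral over $R[Lt]$ and $x\in\overline{L^{\,n}}$. This proves $\overline{L^{\,n}}:J^\infty=\overline{L^{\,n}}$, i.e.\ $\operatorname{Ass}(R/\overline{L^{\,n}})\cap V(J)=\emptyset$ for all $n$, after which your application of Theorem \ref{ThmN10} over $R_P$ finishes the implication as you intended.
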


A  related result  for ordinary symbolic powers was proven by Katz and Ratliff in Theorem A and Corollary~1 of \cite{KR}.

We have the following immediate corollary.


   \begin{Corollary}\label{LemmaN6*}(Corollary \ref{LemmaN6}) Suppose that $R$ is an excellent  local domain of dimension $d$ and $I=P_1\cap \cdots \cap P_r$ is an intersection of
    prime ideals $P_i$ of $R$ of  a common height. Then the ring $\bigoplus_{n\ge 0}I^{(n)}$ is a finitely generated $R$-algebra if and only if there exists $n\in \ZZ_{>0}$ such that the analytic spread $\ell( I^{(n)}_Q)<\mbox{ht}(Q)$ for all prime ideals $Q$ of $R$ which contain $I$ and are not one of the minimal primes $P_i$ of $I$.  
   \end{Corollary}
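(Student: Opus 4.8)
The plan is to deduce Corollary \ref{LemmaN6*} directly from Theorem \ref{strengthen*} by taking $J$ to be the intersection of all asymptotic prime divisors of $I$ which are not minimal primes. First I would recall the setup stated in the excerpt: for a space-curve-type situation, one has $I^{(n)} = I^n : J^{\infty}$ for all $n$, and the symbolic algebra $\bigoplus_{n\ge 0} I^{(n)}$ equals $S_J(I) = \bigoplus_{k\ge 0} I^k : J^{\infty}$. So ``$\bigoplus_{n\ge 0} I^{(n)}$ is a finitely generated $R$-algebra'' is literally ``$S_J(I)$ is finitely generated,'' and Theorem \ref{strengthen*} translates this into the existence of $r > 0$ with $\ell\big((I^r : J^{\infty})_P\big) < \dim R_P$ for all $P \in V(J)$.

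Next I would identify $(I^r : J^{\infty})$ with $I^{(r)}$, so the condition becomes $\ell(I^{(r)}_P) < \dim R_P = \mathrm{ht}(P)$ (using that $R$ is a domain, hence catenary and equidimensional after localization, so $\dim R_P = \mathrm{ht}(P)$) for all $P \in V(J)$. It then remains to check that $V(J)$ is exactly the set of prime ideals $Q$ containing $I$ which are not among the minimal primes $P_1,\dots,P_r$. Since $J$ is by definition the intersection of the asymptotic prime divisors of $I$ that are not minimal primes of $I$, a prime $Q$ contains $J$ iff $Q$ contains one of these asymptotic primes, and by standard theory (Brodmann) every asymptotic prime divisor of $I$ contains $I$ and the asymptotic primes of $I$ that are minimal are exactly $P_1, \dots, P_r$; thus $V(J) = \{Q \supseteq I : Q \ne P_i \text{ for all } i\}$ — one should be slightly careful that a minimal prime $P_i$ does not happen to contain $J$, which holds because the asymptotic primes are incomparable to... actually one only needs that $P_i \notin V(J)$, equivalently $J \not\subseteq P_i$; since every associated/asymptotic prime not equal to $P_i$ strictly contains some minimal prime and the $P_i$ are the minimal primes of $I = P_1 \cap \cdots \cap P_r$, none of the non-minimal asymptotic primes is contained in $P_i$, so their intersection $J$ is not contained in $P_i$ either.

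Putting these identifications together gives the equivalence: $\bigoplus_{n\ge 0} I^{(n)}$ is finitely generated $\iff$ $\exists\, n = r > 0$ with $\ell(I^{(n)}_Q) < \mathrm{ht}(Q)$ for all primes $Q \supseteq I$ with $Q \notin \{P_1,\dots,P_r\}$, which is exactly the statement of Corollary \ref{LemmaN6*}. So the proof is essentially a dictionary: $S_J(I) \leftrightarrow \bigoplus I^{(n)}$, $I^r : J^{\infty} \leftrightarrow I^{(r)}$, $\dim R_P \leftrightarrow \mathrm{ht}(P)$, and $V(J) \leftrightarrow \{Q \supseteq I, Q \ne P_i\}$.

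The main obstacle — and it is a minor one — is verifying the precise description of $V(J)$, i.e. that the asymptotic prime divisors of $I$ are exactly the $P_i$ together with primes strictly containing $I$, and that the minimal $P_i$ themselves do not contain $J$; this requires citing the structure of asymptotic primes of a radical-type ideal $I = P_1 \cap \cdots \cap P_r$ with the $P_i$ of common height in an excellent domain, and checking that $I^{(n)} = I^n : J^{\infty}$ in this generality rather than only for a single prime of a space curve. Everything else is a direct substitution into Theorem \ref{strengthen*}.
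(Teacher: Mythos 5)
Your overall route is the paper's route: the corollary is obtained by direct substitution into Theorem \ref{strengthen*}, taking $J$ to be the intersection of the non-minimal asymptotic prime divisors of $I$, and using the dictionary $S_J(I)=\bigoplus_{n\ge 0}I^{(n)}$, $I^r:J^\infty=I^{(r)}$, $\dim R_P=\mbox{ht}(P)$. The one step that is wrong as written is the claimed identity $V(J)=\{Q\supseteq I:\ Q\ne P_i\}$. What your argument actually shows is $V(J)=\bigcup_{\mathfrak q}V(\mathfrak q)$, the union over the non-minimal asymptotic primes $\mathfrak q$; this is contained in, but in general strictly smaller than, the set of non-minimal primes containing $I$. (For example, if $I=P$ has height $2$ in a $4$-dimensional $R$ and the only embedded asymptotic prime is $m_R$, then $V(J)=\{m_R\}$, while the corollary quantifies over all height $3$ primes containing $P$ as well.) Consequently, in the direction ``$\bigoplus I^{(n)}$ finitely generated $\Rightarrow$ the analytic spread inequality holds at every non-minimal $Q\supseteq I$,'' Theorem \ref{strengthen*} only gives you the inequality on $V(J)$, and you still owe it at the primes $Q\supseteq I$ with $Q\notin\{P_1,\dots,P_r\}$ and $Q\notin V(J)$. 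The converse direction is unaffected, since it only uses the inclusion $V(J)\subseteq\{Q\supseteq I:\ Q\ne P_i\}$, which you did establish correctly via the common-height hypothesis.

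The missing step is true but requires an argument, most naturally via Theorem \ref{ThmN10} (McAdam). If $Q\supseteq I$, $Q\notin\{P_i\}$ and $Q\notin V(J)$, then $J_Q=R_Q$, so $I^{(n)}_Q=(I^n:J^\infty)_Q=I^n_Q$ and hence $\ell(I^{(n)}_Q)=\ell(I_Q)$. Moreover such a $Q$ is not an asymptotic prime divisor of $I$: it is not one of the $P_i$, and it cannot be one of the non-minimal asymptotic primes since each of those contains $J$. As $R$ is an excellent domain, $R_Q$ is formally equidimensional, and Theorem \ref{ThmN10} gives $\ell(I_Q)=\dim R_Q$ if and only if $Q\in\mbox{Ass}(R/\overline{I^m})$ for some $m$; since $Q$ is not an asymptotic prime divisor this fails, so $\ell(I^{(n)}_Q)=\ell(I_Q)<\dim R_Q=\mbox{ht}(Q)$ automatically. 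With this patch the condition of Theorem \ref{strengthen*}(b) on $V(J)$ becomes equivalent to the corollary's condition on all non-minimal primes containing $I$, and the rest of your substitution argument is correct.
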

   
   
 With our assumption that $R$ is an excellent  local ring of dimension $d$ and $I=P_1\cap \cdots \cap P_r$ is an intersection of prime ideals $P_i$ of $R$ of  a common height in Corollary \ref{LemmaN6*}, we have that  $\ell( I^{(n)}_{P_i})=\mbox{ht}(P_i)=\mbox{ht}(I)$  for the minimal primes $P_i$ of $I$.

 \begin{Corollary}\label{CorN70*}(Corollary \ref{CorN70}) Suppose that $R$ is an excellent  local domain of dimension $d$ and $I=P_1\cap \cdots \cap P_r$ is an intersection of
    prime ideals $P_i$ of $R$ of  a common height. If $I^{(n)}$ is equimultiple for some $n$ then the symbolic algebra $\oplus_{n\ge 0}I^{(n)}$ is a finitely generated $R$-algebra.
    \end{Corollary}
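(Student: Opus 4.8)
The plan is to deduce this from the finite‑generation criterion Corollary~\ref{LemmaN6*}: the symbolic algebra $\bigoplus_{n\ge 0}I^{(n)}$ is a finitely generated $R$‑algebra if and only if there is an integer $n>0$ with $\ell(I^{(n)}_Q)<\operatorname{ht}(Q)$ for every prime $Q$ of $R$ that contains $I$ and is not one of the minimal primes $P_1,\dots,P_r$ of $I$. So it suffices to fix an $n$ for which $I^{(n)}$ is equimultiple and verify that this same $n$ satisfies the criterion.

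The heart of the matter is a squeeze argument using equimultiplicity of the single ideal $I^{(n)}$ together with the classical inequalities for ideals. First, $I^n\subseteq I^{(n)}\subseteq I$ and $I=P_1\cap\cdots\cap P_r$ is radical, so $\sqrt{I^{(n)}}=I$ and hence $\operatorname{ht}(I^{(n)})=\operatorname{ht}(I)=h$, the common height of the $P_i$. Now let $Q$ be a prime of $R$ with $Q\supseteq I$. Since $Q\supseteq I^{(n)}$, localization does not decrease height, so $\operatorname{ht}(I^{(n)})\le\operatorname{ht}(I^{(n)}_Q)$; by (\ref{eqN3}), $\operatorname{ht}(I^{(n)}_Q)\le\ell(I^{(n)}_Q)$; by upper semicontinuity of fibre dimension (\ref{eqN2}) applied to $Q\subseteq m_R$, $\ell(I^{(n)}_Q)\le\ell(I^{(n)})$; and equimultiplicity gives $\ell(I^{(n)})=\operatorname{ht}(I^{(n)})=h$. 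Chaining these yields $h\le\operatorname{ht}(I^{(n)}_Q)\le\ell(I^{(n)}_Q)\le h$, so $\ell(I^{(n)}_Q)=h$. (This is precisely the computation used in the introduction to derive (\ref{eqN7*}), now applied to $I^{(n)}$ in place of $I$.)

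It then remains to observe that $\operatorname{ht}(Q)>h$ whenever $Q\supseteq I$ but $Q$ is not one of the $P_i$: a prime containing the finite intersection $I=P_1\cap\cdots\cap P_r$ contains some $P_i$, and since $Q\ne P_i$ this containment is strict, so $\operatorname{ht}(Q)\ge\operatorname{ht}(P_i)+1=h+1$. Combining with the previous paragraph, $\ell(I^{(n)}_Q)=h<\operatorname{ht}(Q)$ for every such $Q$, and Corollary~\ref{LemmaN6*} gives that $\bigoplus_{n\ge 0}I^{(n)}$ is a finitely generated $R$‑algebra.

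I do not anticipate a serious obstacle; the points requiring care are (i) applying the equimultiplicity criterion to the ideal $I^{(n)}$ rather than to $I$, and being consistent about which ideal is localized — one checks that $I^{(n)}_Q$ as it appears in Corollary~\ref{LemmaN6*} is the localization of the symbolic power, which causes no trouble since symbolic powers commute with localization at primes containing $I$ — and (ii) the elementary height bookkeeping ($\sqrt{I^{(n)}}=I$, heights do not drop under localization, and strict containment of primes raises height), none of which requires more than the hypotheses already assumed in Corollary~\ref{LemmaN6*}.
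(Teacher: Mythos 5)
Your proposal is correct and follows essentially the same route as the paper: both apply the equimultiplicity consequence (\ref{eqN7*}) to the single ideal $I^{(n)}$ (your squeeze argument is exactly the derivation of (\ref{eqN7*}) given in the introduction) and then invoke the finite-generation criterion of Corollary \ref{LemmaN6}, with the paper compressing the height bookkeeping $\ell(I^{(n)}_Q)=\mbox{ht}(I^{(n)}_Q)=\mbox{ht}(I_Q)<\dim R_Q$ into one line.
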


    
    However, there exist ideals $I$ such that the symbolic algebra $\oplus_{n\ge 0}I^{(n)}$ is a finitely generated $R$-algebra but no symbolic power $I^{(n)}$ is equimultiple (Example \ref{ExAS1}).

  In contrast to the conclusions of Corollary \ref{LemmaN6}, we have that  inequality of analytic spread and height   $\ell(\mathcal I_Q)<\mbox{ht}(Q)$ holds at all non minimal primes for  symbolic filtrations, irregardless of whether their symbolic algebra is a finitely generated $R$-algebra. 
 The following proposition follows from Corollary \ref{CorN31}.

   \begin{Proposition}\label{PropN22*}(Proposition \ref{PropN22}) Suppose that $R$ is a local domain of dimension $d$ and $I=P_1\cap \cdots \cap P_r$ is an intersection of
    prime ideals $P_i$ of $R$ of  a common height. Suppose $R_{P_i}$ is a regular local ring for $1\leq i\leq r$. Let $\mathcal I=\{I^{(n)}\}$ be the symbolic filtration of $I$. Then the analytic spread $\ell( \mathcal I_Q)<\mbox{ht}(Q)$ for all prime ideals $Q$ of $R$ which contain $I$ and are not one of the minimal primes $P_i$ of $I$ and $\ell(\mathcal I_{P_i})=\mbox{ht}(P_i)=\mbox{ht}(I)$
    for all minimal primes  $P_i$ of $I$.
    \end{Proposition}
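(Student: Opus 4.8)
The plan is to recognize $\mathcal I=\{I^{(n)}\}$ as an $s$-divisorial filtration, deduce the first assertion directly from Corollary~\ref{CorN31}, and obtain the equality at the minimal primes by a one-line localization.

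First I would verify that $\mathcal I$ is $s$-divisorial, where $s$ is the common value of $\dim R/P_i$. Since $R_{P_i}$ is a regular local ring, the $P_iR_{P_i}$-adic order is a rank-one discrete valuation $\nu_i$ of the fraction field of $R$, with valuation ideals on $R$ given by $I(\nu_i)_n=(P_iR_{P_i})^n\cap R=P_i^{(n)}$. Hence $I^{(n)}=\bigcap_{i=1}^r P_i^{(n)}=\bigcap_{i=1}^r I(\nu_i)_n$, so $\mathcal I$ is a divisorial filtration. The displayed decomposition is an irredundant primary decomposition of $I^{(n)}$ (the $P_i$ are distinct of a common height, hence pairwise incomparable), so $\Ass(R/I^{(n)})=\{P_1,\dots,P_r\}$ for every $n\ge 1$; thus each $I^{(n)}$ has no embedded components and is equidimensional of dimension $s$, using that the $P_i$ share a common height. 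Therefore $\mathcal I$ is $s$-divisorial.

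Next, let $Q$ be a prime ideal of $R$ with $I\subseteq Q$ and $Q\neq P_i$ for all $i$. The minimal primes of $I_1=I$ are exactly $P_1,\dots,P_r$, so $Q$ is not a minimal prime of $I_1$, and Corollary~\ref{CorN31} yields $\ell(\mathcal I_Q)<\dim R_Q=\mbox{ht}(Q)$, the last equality holding because $R$ is a domain. For a minimal prime $P_i$, localizing at $P_i$ kills every $P_j^{(n)}$ with $j\neq i$ (as $P_j\not\subseteq P_i$), so $(I^{(n)})_{P_i}=(P_iR_{P_i})^n=\mathfrak m_i^n$, where $\mathfrak m_i=P_iR_{P_i}$ is the maximal ideal of $R_{P_i}$. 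Thus $\mathcal I_{P_i}$ is the $\mathfrak m_i$-adic filtration on the Noetherian local ring $R_{P_i}$, whose Rees ring is the ordinary Rees algebra $R_{P_i}[\mathfrak m_i]$; since $\mathfrak m_i$ is $\mathfrak m_i$-primary, $\ell(\mathcal I_{P_i})=\ell(\mathfrak m_i)=\dim R_{P_i}=\mbox{ht}(P_i)$. Finally $\mbox{ht}(P_i)=\mbox{ht}(I)$ because $\mbox{ht}(I)=\min_j\mbox{ht}(P_j)$ and all the $P_j$ have the same height.

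The only step carrying any content is the reduction to Corollary~\ref{CorN31}, that is, checking that $\mathcal I$ is $s$-divisorial; once that is in place the first inequality is immediate and the equality at the $P_i$ is a routine localization computation. So I do not anticipate a genuine obstacle beyond the bookkeeping needed to see that the symbolic filtration is divisorial and that its terms are equidimensional without embedded primes.
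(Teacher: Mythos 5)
Your proposal is correct and follows exactly the route the paper intends: the paper offers no separate argument beyond the remark that the $P_iR_{P_i}$-adic orders make $\mathcal I$ an $s$-divisorial filtration together with the citation of Corollary~\ref{CorN31}, and you supply precisely those details plus the routine localization $\mathcal I_{P_i}=\{\mathfrak m_i^n\}$ giving $\ell(\mathcal I_{P_i})=\dim R_{P_i}$. No gaps.
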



The following  theorem shows that 
 in the case of the symbolic algebra of a height two prime ideal in a three dimensional local ring, the analytic spread of the symbolic filtration is bounded above by the height, which is 2, and all analytic spreads $\le 2$ occur. Thus the inequality of (\ref{eqN3}) for ideals is reversed! In contrast, even for non Noetherian divisorial filtrations of $m_R$-valuations in an excellent local domain, the analytic spread of the filtration must be equal to dimension $R$ by Theorem \ref{ThmN21*}.

 \begin{Theorem}\label{PropN5*}(Theorem \ref{PropN51})  Suppose that $R$ is a regular local ring of dimension 3, and $\mathfrak p$ is a height two prime ideal of $R$. Let $\mathcal I=\{\mathfrak p^{(n)}\}$ be the symbolic filtration. Then $\ell(\mathcal I)\le 2$ and all values $\ell(\mathcal I)=0,1,2$ can occur.
  \end{Theorem}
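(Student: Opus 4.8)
The plan is to obtain the bound $\ell(\mathcal I)\le 2$ formally from results already stated, and then to exhibit three space curve singularities realizing $\ell(\mathcal I)=0,1,2$. For the bound I would apply Proposition~\ref{PropN22*} (equivalently Corollary~\ref{CorN31*}) to $I=\mathfrak p$, i.e.\ the case $r=1$; its hypothesis is satisfied since $R_{\mathfrak p}$ is a regular local ring, $R$ being regular. Because $\mbox{ht}(\mathfrak p)=2<3=\mbox{ht}(m_R)$, the maximal ideal $m_R$ contains $\mathfrak p$ and is not the minimal prime $\mathfrak p$ of $\mathfrak p$, so $\ell(\mathcal I)=\ell(\mathcal I_{m_R})<\mbox{ht}(m_R)=3$, i.e.\ $\ell(\mathcal I)\le 2$. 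To realize the value $2$ I would take a height two prime whose symbolic Rees algebra is Noetherian---the simplest choice being a complete intersection such as $\mathfrak p=(x,y)$ in $R=k[[x,y,z]]$, for which $\mathfrak p^{(n)}=\mathfrak p^{n}$ for all $n$ because $R/\mathfrak p^{n}$ is Cohen--Macaulay, hence unmixed. Then $\mathcal I$ is a Noetherian filtration of height $2$ and Proposition~\ref{PropAS11} forces $2=\mbox{ht}(\mathcal I)\le\ell(\mathcal I)\le 2$.

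The substantial part is realizing $\ell(\mathcal I)=1$ and $\ell(\mathcal I)=0$; by the Noetherian characterization recalled in the introduction both of these examples must have non-finitely-generated symbolic algebra. I would start from one of the known space monomial curve primes $\mathfrak p=\ker\big(k[[x,y,z]]\to k[[t]]\big)$, $x\mapsto t^{a}$, $y\mapsto t^{b}$, $z\mapsto t^{c}$, whose symbolic Rees algebra is not finitely generated (the examples of \cite{Ro}, \cite{GNW} and \cite{KN}), and compute directly the dimension of the fiber cone
$$
F=R[\mathcal I]/m_RR[\mathcal I]=\bigoplus_{n\ge 0}\mathfrak p^{(n)}/m_R\mathfrak p^{(n)}.
$$
Since $F$ is $\NN$-graded with $F_0=R/m_R$ a field and irrelevant ideal $F_{+}$, one has $\dim F=0$ precisely when $F_{+}$ is nil, which by Lemma~\ref{sp0} is exactly the condition that for every $n$ and every $f\in\mathfrak p^{(n)}$ some power $f^{m}$ lies in $m_R\mathfrak p^{(mn)}$; and $\dim F=1$ precisely when $F$ is not nil but its reduced ring $F_{\mathrm{red}}$ has Krull dimension one. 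The monomial structure makes $\mathfrak p^{(n)}$ and the images in $F$ of its generators explicit---through the description of $\mathfrak p^{(n)}$ by the order valuation, equivalently a Newton-polygon condition on monomials---so that computing $\dim F$ reduces to a combinatorial analysis of which generators of $\mathfrak p^{(n)}$ survive modulo $m_R$ in a given degree and of the multiplicative relations among them across degrees.

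I expect this last computation to be the main obstacle, and in particular the case $\ell(\mathcal I)=0$: one needs a non-Noetherian example in which the degrees carrying new generators of $\mathfrak p^{(n)}$ are sparse enough that every homogeneous element of $F_{+}$ is nilpotent, equivalently that powers of an arbitrary symbolic element are absorbed into $m_R$ times a deeper symbolic power. Establishing this, and separately pinning the fiber cone dimension at exactly $1$ (rather than $0$ or $2$) for the intermediate example, is the delicate example-specific work; the remaining assertions of the theorem then follow formally.
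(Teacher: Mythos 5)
Your treatment of the two easy parts is correct and coincides with the paper's: the bound $\ell(\mathcal I)\le 2$ follows from Corollary \ref{CorAS4} (equivalently, from Proposition \ref{PropN22} applied at $Q=m_R$, as you do), and $\mathfrak p=(x,y)$ with $\mathfrak p^{(n)}=\mathfrak p^{n}$ together with Proposition \ref{PropAS11} gives $\ell(\mathcal I)=2$. The genuine gap is that the entire substance of the theorem --- the existence of height two primes with $\ell(\mathcal I)=0$ and $\ell(\mathcal I)=1$ --- is left as a plan rather than a proof. You reduce the problem to ``a combinatorial analysis of which generators of $\mathfrak p^{(n)}$ survive modulo $m_R$'' and explicitly defer the computation; but this computation is exactly where all the difficulty lies (the paper devotes the whole of Section \ref{SecEx} to it), so nothing has been established for the values $0$ and $1$.

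Moreover, the route you propose is unlikely to produce the $\ell(\mathcal I)=0$ example. First, \cite{Ro} is not a space monomial curve: Roberts' prime is built from Nagata's $r=s^2$ generic points of $\PP^2_{\CC}$ via a completion isomorphism. Second, the known non-Noetherian monomial curves (\cite{GNW}, \cite{KN}) are proved non-Noetherian precisely by exhibiting a ``negative curve,'' an element $f\in\mathfrak p^{(n)}$ of minimal possible (weighted) degree; for such an $f$ one has $f^{m}\notin m_R\mathfrak p^{(mn)}$ for all $m$ by a degree count, so $f$ is non-nilpotent in the fiber cone and forces $\ell(\mathcal I)\ge 1$. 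Finding a monomial prime with no such element is an open Nagata-type problem, so the $\ell=0$ case cannot be extracted from these examples. The paper instead obtains $\ell=0$ from Nagata's theorem on $r\ge 16$ generic points (the inequality $d>\frac{1}{\sqrt{r}}\sum m_i$ plus a Castelnuovo--Mumford regularity argument yields $(I^{(n)})^{s}\subset m_RI^{(sn)}$, hence nilpotence of the whole fiber cone), and obtains $\ell=1$ from Zariski's $12$ generic points on a plane cubic, where the cubic form $G$ is the unique surviving negative curve and $R[\mathcal I]/\sqrt{m_RR[\mathcal I]}\cong\CC[Gt]$; Roberts' construction is then used in both cases to replace the intersection of primes by a single height two prime. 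None of this content is recoverable from your proposal as written.
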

  In Section \ref{SecEx} we construct examples illustrating this theorem with $\ell(\mathcal I)=0$ and 1. A simple example with $\ell(\mathcal I)=2$ is given in the proof of Theorem \ref{PropN5*}.
  

 We have the following ideal theoretic interpretation of analytic spread zero for a symbolic filtration $\mathcal I=\{\mathfrak p^{(n)}\}$ . We have (by Lemma \ref{sp0}) that 
    $$
    \ell(\mathcal I)=0\mbox{ if and only if }
          \mbox{ for all $n$ and     $f\in \mathfrak p^{(n)}$, there exists $m>0$ such that $f^m\in m_R\mathfrak p^{(mn)}$.}
  $$  
  
 In Theorem \ref{PropN51}, we necessarily have that the symbolic algebra is not finitely generated if $\ell(\mathcal I)<2$ (by Proposition \ref{PropAS11}). A simple example of a symbolic algebra achieving the maximum analytic spread   $\ell(\mathcal I)=2$   may be constructed by taking $\mathfrak p$ to be a regular prime ideal in $R$ ($\mathfrak p=(x,y)$ where $x,y,z$ is a regular system of parameters in $R$). We do not know of an example such that $\ell(\mathcal I)=2$ but the symbolic algebra is not finitely generated.
 
 If $\ell(\mathcal I)<2$, then by Corollary \ref{LemmaN6}, the analytic spread $\ell(\mfp^{(n)})=3$ for all $n>0$ and by 
 Proposition \ref{PropAS10}, we have that $\ell(\mathcal I_a)=3$ for all truncations $\mathcal I_a$ of $\mathcal I$.
  
We  look  a little more closely at the most dramatic case of the theorem, when   $\ell(\mathcal I)=0$. 
  The analytic spread $\ell(\mathcal I)$ being zero has the following interpretation in the geometry of the canonical projection $\phi:\mbox{Proj}(R[\mathcal I])\rightarrow \mbox{Spec}(R)$.
  We have that  
 $\phi^{-1}(\mathfrak p)=\PP^1_{\kappa(\mathfrak p)}$, where $\kappa(\mathfrak p)=(R/\mathfrak p)_{\mathfrak p}$, since $\mbox{Proj}(\oplus_{n\ge 0}\mathfrak p_{\mathfrak p}^n)$ is the blow up of the maximal ideal $\mathfrak p_{\mathfrak p}$
 in the two dimensional regular local ring $R_{\mathfrak p}$, so that $\dim \phi^{-1}(\mathfrak p)=1$, but
 $\phi^{-1}(m_R)=\emptyset$
  since $\ell(\mathcal I)=0$. In particular, the theorem on upper semicontinuity of fiber dimension (\ref{eqN2}) for ideals  fails spectacularly in  this non Noetherian situation.

 \section{Notation}

We will denote the nonnegative integers by $\NN$ and the positive integers by $\ZZ_{>0}$,   the set of nonnegative rational numbers  by $\QQ_{\ge 0}$  and the positive rational numbers by $\QQ_{>0}$. 
We will denote the set of nonnegative real numbers by $\RR_{\ge0}$ and the positive real numbers by $\RR_{>0}$.


A local ring is assumed to be Noetherian.
The maximal ideal of a local ring $R$ will be denoted by $m_R$. 
 Excellent local rings have many excellent properties which are enumerated in \cite[Scholie IV.7.8.3]{EGA}. We will make use of some of these properties without further reference.

\section{ The analytic spread of a filtration}\label{SecFil}

A filtration $\mathcal I=\{I_n\}_{n\in\NN}$ of ideals on a ring $R$ is a descending chain
$$
R=I_0\supset I_1\supset I_2\supset \cdots
$$
of ideals such that $I_iI_j\subset I_{i+j}$ for all $i,j\in \NN$. A filtration $\mathcal I=\{I_n\}$ of ideals on a local ring $(R,\mR)$ is a filtration of $R$ by $\mR$-primary ideals if $I_n$ is $m_R$-primary for $n\ge 1$.
A filtration $\mathcal I=\{I_n\}_{n\in\NN}$ of ideals on a ring $R$ is called a Noetherian filtration if $\bigoplus_{n\ge 0}I_n$ is a finitely generated $R$-algebra.

If $I\subset R$ is an ideal, then $V(I)=\{\mfp\in \mbox{Spec}(R)\mid I\subset \mfp\}$.

For any filtration  $\mathcal I=\{I_n\}$ and $\mfp\in \Spec R,$  let $\mathcal I_\mfp$ denote the filtration $\mathcal I_\mfp=\{I_{n}R_\mfp\}.$

 Let $R$ be a local ring and $\mathcal I=\{I_n\}$ be a filtration of  $R$.   We define the graded $R$-algebra
  $R[\mathcal I]=\sum_{m\ge 0}I_mt^m.$
      
  For the rest of this section , suppose that $R$ is a local ring.
  Let  $\mathcal I=\{I_n\}$ be a filtration of ideals of $R.$  Then, \cite[Lemma 3.1]{CS},
  \begin{equation}\label{dim*}
			\mbox{ For all $n\geq 1,$ $V(I_1)=V(I_n)$ and $\dim R/I_1=\dim R/I_n$.}
			\end{equation}


\begin{Definition} Let  $R$ be a  local ring and $\mathcal I=\{I_n\}$ be a filtration of ideals of $R$.  We define the {\it{dimension of the filtration $\mathcal I$}} to be  $s(\mathcal I)=\dim R/I_n$ (for any $n\geq 1$), and define the height $\mbox{ht}(\mathcal I)$ of $\mathcal I$ to be $\mbox{ht}(\mathcal I)=\mbox{ht}(I_n)$ (for any $n\ge 1$).
\end{Definition}

 The dimension $s(\mathcal I)$ 	 and height $\mbox{ht}(\mathcal I)$ are well-defined by equation (\ref{dim*}). In the case of the trivial filtration $\mathcal I=\{I_n\}$, where $I_n=R$ for all $n$, we have that $s(\mathcal I)=-1$. 


  
  Suppose that $\mathcal I=\{I_n\}$ is a filtration of a local ring $R$. Then the associated graded rings
  $$
  R[\mathcal I]=\sum_{n\ge 0} I_nt^n\mbox{ and }
    S[\mathcal I]=R[\mathcal I][t^{-1}]
  $$
   are subrings of the graded ring $R[t,t^{-1}]$. We have 
  a graded ring 
  $$
  T_{\mathcal I}:=R[\mathcal I]/m_RR[\mathcal I].
  $$
 
	\begin{Definition}\label{trunc}
	Suppose that $\mathcal I=\{I_i\}$  is a filtration of ideals on a local ring $R$. Fix $a\in \ZZ_{>0}.$ The $a$-th {\it truncated  filtration} $\mathcal I_a=\{I_{a,n}\}$ of $\mathcal I$ is defined  by 
	\[ I_{a,n}= \left\{
	\begin{array}{l l}
	~~~~~I_n & \quad \text{if $n\le a$ }\\ \vspace{0.3mm}\\
	\sum\limits_{\substack{i,j>0\\i+j=n}} I_{a,i}I_{a,j} & \quad \text{if $n>a.$ }
	\end{array} \right.\] 	
\end{Definition}

  Let $\mathcal I_a$ be the $a$-th truncation of $\mathcal I$. Then $R[\mathcal I]=\cup_{a\ge 0}R[\mathcal I_a]$ and $S[\mathcal I]=\cup_{a\ge 0}S[\mathcal I_a]$.
  
  The following remark follows from Proposition III.3.2 and Proposition III.3.3 on pages 158 and 159 of \cite{Bo}.

  \begin{Remark} \label{RemAS1} 
Suppose that $\mathcal I$ is a Noetherian filtration. There exists $e>0$ such that for all $m\ge 1$,
$R[\mathcal I]$ is a finitely generated $R[I_{me}t^{me}]$-module. In particular,  $\dim R[\mathcal I]=\dim R[I_{me}t^{me}]$. 
 \end{Remark}

 \begin{Lemma}\label{dim}
  		Let $\mathcal A$ be an $\mathbb N$ or $\mathbb Z$-graded ring. Suppose $\{\mathcal A_a\}_{a\geq 1}$ is a collection of Noetherian graded rings with the same grading as $\mathcal A,$ $\max\{\dim \mathcal A_a:a\ge 1\}<\infty,$ $\mathcal A_{a,n}=\mathcal A_n$ for all $ n\le a$ and for each $a\geq 1$ there is a graded ring homomorphism $\phi_a:\mathcal A_a\rightarrow\mathcal A$ such that $\phi_a(x)=x$ for all homogeneous elements of  $\mathcal A_a$ of degree less than or equal to $a.$ Then $\dim \mathcal A\le\max\{\dim \mathcal A_a:a\ge 1\}.$
  	\end{Lemma}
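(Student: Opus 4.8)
The plan is to bound the dimension of $\mathcal A$ by exhibiting, for every chain of primes in $\mathcal A$, a chain of the same length in some $\mathcal A_a$. First I would recall that for a graded ring $\mathcal A$, the dimension can be computed by looking only at homogeneous prime ideals: $\dim\mathcal A = \sup\{\operatorname{ht}\mathfrak P : \mathfrak P \text{ homogeneous prime of }\mathcal A\}$ together with the fact that any saturated chain of primes, once we pass to the largest homogeneous prime contained in each, can be arranged to be a chain of homogeneous primes of the same length (this is the standard fact that $\dim\mathcal A = \dim\mathcal A_{\mathfrak M}$ for $\mathfrak M$ the irrelevant-type maximal graded ideal, or more simply that homogeneous primes already compute the dimension). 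So it suffices to take a chain of homogeneous primes $\mathfrak Q_0 \subsetneq \mathfrak Q_1 \subsetneq \cdots \subsetneq \mathfrak Q_\ell$ in $\mathcal A$ and produce a chain of length $\ell$ in some single $\mathcal A_a$.

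The key step is the choice of $a$. Since the chain is finite and each strict inclusion $\mathfrak Q_{i} \subsetneq \mathfrak Q_{i+1}$ is witnessed by some homogeneous element $f_i \in \mathfrak Q_{i+1} \setminus \mathfrak Q_i$, and since $\phi_a$ is the identity on homogeneous elements of degree $\le a$, I would pick $a$ larger than the degrees of all the finitely many witnesses $f_0,\dots,f_{\ell-1}$. Then define $\mathfrak q_i = \phi_a^{-1}(\mathfrak Q_i)$, a chain of primes in $\mathcal A_a$ (preimages of primes under a ring homomorphism are primes, and the chain is ascending). The inclusions $\mathfrak q_i \subseteq \mathfrak q_{i+1}$ are clear; strictness follows because $f_i \in \mathcal A_{a,n} = \mathcal A_n$ for $n = \deg f_i \le a$, so $f_i$ is a homogeneous element of $\mathcal A_a$ with $\phi_a(f_i) = f_i \in \mathfrak Q_{i+1}\setminus\mathfrak Q_i$, hence $f_i \in \mathfrak q_{i+1}\setminus\mathfrak q_i$. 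This gives a chain of length $\ell$ of primes in the Noetherian ring $\mathcal A_a$, so $\ell \le \dim\mathcal A_a \le \max\{\dim\mathcal A_b : b\ge 1\}$, and taking the supremum over all chains in $\mathcal A$ yields the claim.

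The one point that needs care — and I expect it to be the main obstacle to a fully rigorous write-up — is the reduction to chains of \emph{homogeneous} primes. One must make sure that $\dim\mathcal A$ is genuinely realized (or at least approached arbitrarily closely, which is enough since all quantities are bounded integers) by chains of homogeneous primes; the clean way is to invoke that for any prime $\mathfrak P$ of a graded ring, the ideal $\mathfrak P^*$ generated by the homogeneous elements of $\mathfrak P$ is again prime, and that in a saturated chain one can replace each $\mathfrak P_i$ by $\mathfrak P_i^*$ without the chain collapsing (this is \cite[Theorem 1.5.8]{HS}-type material, or Bruns--Herzog), so $\dim\mathcal A$ equals the supremum of lengths of chains of homogeneous primes. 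Alternatively one can phrase the whole argument in terms of the homogeneous element witnessing strictness of a general chain after passing to homogeneous components, but the cleanest route is the $\mathfrak P^*$ reduction. Once that is in place the rest is the routine preimage-of-primes argument sketched above, and the finiteness of the chain is exactly what lets us choose a single $a$ that works for all the inclusions at once.
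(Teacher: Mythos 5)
Your core mechanism --- choose $a$ large enough that finitely many witnesses of strictness live in $\mathcal A_a$, then pull the chain back along $\phi_a$ --- is exactly the paper's proof. The problem is the reduction to chains of \emph{homogeneous} primes on which you build everything. That reduction is false in the generality of the lemma: $\mathcal A$ is allowed to be $\ZZ$-graded and is not assumed Noetherian, and already for $k[t,t^{-1}]$ the maximal ideal $(t-1)$ has height $1$ while the ideal generated by its homogeneous elements is $(0)$, so the $\mathfrak P^{*}$ operation collapses the chain $(0)\subset (t-1)$ and homogeneous primes do not compute the dimension (the correct general statement is only $\dim\mathcal A\le \dim^{\mathrm{gr}}\mathcal A+1$). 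This is not a hypothetical worry: the lemma is applied in the paper to the $\ZZ$-graded ring $S[\mathcal I]$. Your fallback remark about ``the homogeneous element witnessing strictness of a general chain'' has the same defect --- a strict inclusion $P_{i-1}\subsetneq P_i$ of non-homogeneous primes need not be witnessed by any homogeneous element, as the same example shows.

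The repair is to drop homogeneity altogether, which is what the paper does. Take an arbitrary chain $P_0\subset P_1\subset\cdots\subset P_r$ of distinct primes of $\mathcal A$ and arbitrary $f_i\in P_i\setminus P_{i-1}$. Each $f_i$ is a \emph{finite} sum of homogeneous components; choose $a$ exceeding all degrees occurring among them. Since $\mathcal A_{a,n}=\mathcal A_n$ for $n\le a$, each $f_i$ lies in $\mathcal A_a$, and since $\phi_a$ is additive and fixes homogeneous elements of degree $\le a$, we get $\phi_a(f_i)=f_i$. Hence $f_i\in\phi_a^{-1}(P_i)\setminus\phi_a^{-1}(P_{i-1})$, the pulled-back chain is strict, and $r\le\dim\mathcal A_a\le\max\{\dim\mathcal A_b: b\ge 1\}$. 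The obstacle you flagged as the main difficulty simply disappears once you stop insisting that the primes or the witnesses be homogeneous.
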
  
  	
  	\begin{proof}
  		Let $P_0\subset P_1\subset \cdots\subset P_r$ be a chain of distinct prime ideals in $\mathcal A$. 	There exist $f_i\in P_i\setminus P_{i-1}$ for $1\le i\le r$. Let $a\in \ZZ_{>0}$ be such that $f_1,\ldots,f_r\in \mathcal A_a$. 
  		Then the prime ideals in the chain of prime ideals in $\mathcal A_a$,
  		$$
  		\phi_a^{-1}(P_0)\subset \phi_a^{-1}(P_1)\subset \cdots\subset  \phi_a^{-1}(P_r)
  		$$
  		are all distinct. Thus $r\le\dim\mathcal A_a$. 
  	\end{proof}
  	
  	\begin{Lemma}\label{LemmaAS2}
	 For any filtration (possibly nonnoetherian) of ideals $\mathcal I$ in $R,$ $\dim R[\mathcal I]\le \dim R+1$, $\dim T_{\mathcal I}\le\dim R$ and $\dim S[\mathcal I]\le\dim R+1$. In particular, if $R$ is a domain  of dimension greater than zero  and $I_1\neq 0$, 
	  then $\dim R[\mathcal I]= \dim R+1$, $\dim S[\mathcal I]=\dim R+1$.
  	\end{Lemma}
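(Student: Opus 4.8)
The plan is to prove Lemma \ref{LemmaAS2} by combining the truncation trick of Lemma \ref{dim} with the classical facts about Rees algebras of ideals. First I would observe that each truncated filtration $\mathcal I_a$ is a Noetherian filtration, so by Remark \ref{RemAS1} there is $e>0$ with $\dim R[\mathcal I_a]=\dim R[I_{a,me}t^{me}]$; since $R[I_{a,me}t^{me}]$ is a Rees-type algebra of a single ideal in $R$, the standard bound gives $\dim R[\mathcal I_a]\le \dim R+1$, and likewise $\dim S[\mathcal I_a]\le\dim R+1$ and $\dim R[\mathcal I_a]/m_RR[\mathcal I_a]\le\dim R$ (this last being the classical $\ell(I)\le\dim R$ applied to the truncation, together with the fact that $T_{\mathcal I_a}$ is the fiber cone of that Rees algebra up to an integral extension). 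Then, using $R[\mathcal I]=\bigcup_{a\ge 0}R[\mathcal I_a]$ and $S[\mathcal I]=\bigcup_{a\ge 0}S[\mathcal I_a]$ together with the inclusion maps $\phi_a$ (which are the identity in low degrees), Lemma \ref{dim} applies with $\mathcal A=R[\mathcal I]$ (respectively $S[\mathcal I]$, respectively $T_{\mathcal I}=R[\mathcal I]/m_RR[\mathcal I]$) and $\mathcal A_a=R[\mathcal I_a]$ (resp. $S[\mathcal I_a]$, resp. $T_{\mathcal I_a}$), yielding $\dim R[\mathcal I]\le\dim R+1$, $\dim S[\mathcal I]\le\dim R+1$, and $\dim T_{\mathcal I}\le\dim R$. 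One has to check that the hypothesis $\max\{\dim\mathcal A_a\}<\infty$ holds, but that is immediate from the uniform bound $\dim R+1$ just established.

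For the ``in particular'' clause, assume $R$ is a domain with $\dim R>0$ and $I_1\neq 0$. Here I would get the reverse inequality by exhibiting a long chain of primes. Pick $0\neq f\in I_1$; then $ft\in R[\mathcal I]$ is a nonzero element, and since $R[\mathcal I]\subset R[t]$ is a domain, the element $ft$ is not a unit and not a zerodivisor. I would argue that $R[\mathcal I]$ has dimension at least $\dim R+1$ either by localizing: the ring $R[\mathcal I][(ft)^{-1}]$ contains $R[t][t^{-1}]\cdot$(something) — more precisely, inverting $ft$ makes $t$ invertible up to the unit $f$, so $R[\mathcal I][(ft)^{-1}] = R[t,t^{-1}]_g$ for a suitable $g$, hence has dimension $\dim R+1$; therefore $\dim R[\mathcal I]\ge\dim R[\mathcal I][(ft)^{-1}]+\operatorname{ht}(ft)\ge (\dim R+1)$... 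I would instead phrase it cleanly as: $S[\mathcal I]=R[\mathcal I][t^{-1}]$ contains $f t$ and $t^{-1}$, so it contains $f=ft\cdot t^{-1}$, and localizing $S[\mathcal I]$ at the multiplicative set generated by $ft$ gives $R[t,t^{-1}]_{ft}$, which has dimension $\dim R+1$; since localization cannot raise dimension and $S[\mathcal I]\subseteq R[t,t^{-1}]$ forces $\dim S[\mathcal I]\le\dim R+1$, we get $\dim S[\mathcal I]=\dim R+1$. Then $\dim R[\mathcal I]\ge\dim S[\mathcal I]-\dots$; more directly, $R[\mathcal I]\subseteq S[\mathcal I]$ with $S[\mathcal I]$ a localization-type overring of $R[\mathcal I]$ in degree considerations, so any chain of primes in $S[\mathcal I]$ of length $\dim R+1$ restricts to a chain in $R[\mathcal I]$; combined with the upper bound this gives $\dim R[\mathcal I]=\dim R+1$.

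The main obstacle I anticipate is the reverse inequality in the ``in particular'' statement: making precise the claim that inverting $ft$ in $S[\mathcal I]$ (or in $R[\mathcal I]$) recovers the full Laurent-type ring of dimension $\dim R+1$, and that this lower bound on the localization descends to a lower bound on $R[\mathcal I]$ itself. The clean way is to note $S[\mathcal I][(ft)^{-1}]=R[t,t^{-1}][f^{-1}]$: indeed $t^{-1}\in S[\mathcal I]$ already, and once $ft$ is inverted, $t=(ft)\cdot(ft)^{-1}\cdot f^{-1}\cdot\dots$ — rather, $t^{-1}$ and $f^{-1}t^{-1}\cdot ft = f^{-1}$ show $f$ becomes a unit, whence $t=ft\cdot f^{-1}$ is a unit, so the localization equals $R[f^{-1}][t,t^{-1}]$. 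This ring has dimension $\dim R_f + 1 = \dim R+1$ (since $R$ is a domain, $R_f$ still has dimension $\dim R$). A chain of primes of that length pulls back along $S[\mathcal I]\to S[\mathcal I][(ft)^{-1}]$ to a chain in $S[\mathcal I]$ (localization is injective on spectra), giving $\dim S[\mathcal I]\ge\dim R+1$; and similarly for $R[\mathcal I]$ using $R[\mathcal I][(ft)^{-1}]=R[f^{-1}][t,t^{-1}]$ by the same computation. Everything else is bookkeeping with the truncation filtrations and citing the classical bounds for ideals.
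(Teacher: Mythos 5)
Your proof of the three upper bounds is correct and is essentially the paper's own argument: truncate, apply Remark \ref{RemAS1} and the classical dimension bounds for the Rees algebra, extended Rees algebra and fiber cone of a single ideal, and pass to the union via Lemma \ref{dim}. Nothing to add there.

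The ``in particular'' clause is where your proof has a genuine gap, in two independent places. First, the identification $S[\mathcal I][(ft)^{-1}]=R_f[t,t^{-1}]$ (and likewise for $R[\mathcal I]$) is false: inverting the single element $ft$ does not make $f$ and $t$ separately invertible, and the step ``$f^{-1}t^{-1}\cdot ft=f^{-1}$'' is an arithmetic slip --- that product is $1$. Concretely, for $R=k[x]_{(x)}$, $I_n=(x^{2n})$ and $f=x^2\in I_1$, one has $R[\mathcal I][(ft)^{-1}]=R[u,u^{-1}]$ with $u=x^2t$, whose degree-zero piece is $R$, not $R_{x^2}=k(x)$. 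Second, even where the identification does hold, the claim that $\dim R_f=\dim R$ for a domain is wrong in the local setting: since $f\in I_1\subset m_R$ (in the nontrivial case), every prime avoiding $f$ is properly contained in $m_R$ and hence has height at most $\dim R-1$, so $\dim R_f\le \dim R-1$. For example, with $R=k[x,y]_{(x,y)}$, $I_n=(x^n)$ and $f=x^2\in I_1$, one really does get $R[\mathcal I][(ft)^{-1}]=R_x[t,t^{-1}]$, which has dimension $2<\dim R+1=3$; so localizing at $ft$ can land in a ring of strictly smaller dimension and does not produce the required chain of primes. The paper's route avoids both problems and is simpler: for $R[\mathcal I]$ it takes the irrelevant ideal $P=\sum_{n\ge 1}I_nt^n$, which is a nonzero prime (here $I_1\ne 0$ and $R[\mathcal I]$ is a domain are used) with $R[\mathcal I]/P\cong R$, giving $\dim R[\mathcal I]\ge \operatorname{ht}(P)+\dim R\ge \dim R+1$; for $S[\mathcal I]$ it localizes at $t^{-1}$ rather than at $ft$ --- inverting $t^{-1}$ genuinely makes $t$ a unit, so $S[\mathcal I]_{t^{-1}}=R[t,t^{-1}]$ has dimension $\dim R+1$, and primes of a localization contract to a chain of distinct primes. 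If you want to keep a localization argument for $R[\mathcal I]$ as well, you must choose a multiplicative set whose localization you can actually compute and whose dimension you can certify to be $\dim R+1$; inverting $ft$ with $f\in I_1$ accomplishes neither.
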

  	\begin{proof}
  		Let $\mathcal I_a$ denote the $a$-th truncated filtration of $\mathcal I$ for all $a\geq 1.$ Since  $\mathcal I_a$ is Noetherian for all $a\ge 1$, there exists $d_a\ge 0$ such that 
  		$R[\mathcal I_a]$ is a finitely generated $R[I_{a,d_a}t^{d_a}]$-module by Remark \ref{RemAS1}. Thus $\dim R[I_{a,d_a}t^{d_a}]\le \dim R+1$ and $\dim T_{\mathcal I_a}\le\dim R$ (formula (1) on page 94 of \cite{HS}, \cite[Proposition 5.1.6]{HS}). 
  		
  		Further, by Remark \ref{RemAS1}, there exists $d\ge 0$ such that $S[\mathcal I_a]$ is a finitely generated $R[I_{a,d}t^d,t^{-d}]$-module. By formula (2) on page 94 of \cite{HS}, $\dim R[I_{a,d}t^d,t^{-d}]\le \dim R+1$. Thus $\dim  S[\mathcal I_a]\le \dim R+1$.
  		
  		Since for all $a\geq 1,$ we have maps $\phi_a: R[\mathcal I_a]\rightarrow R[\mathcal I]$ defined by $\phi_a(xt^n)=xt^n$ for all homogeneous $x\in R[\mathcal I_a]$ of degree $n\in \mathbb N,$	$\psi_a: S[\mathcal I_a]\rightarrow S[\mathcal I]$ defined by $\psi_a(xt^n)=xt^n$ for all homogeneous $x\in S[\mathcal I_a]$ of degree $n\in \mathbb Z,$ and $\chi_a:T_{\mathcal I_a}\rightarrow T_{\mathcal I}$ defined by $\chi_a(x+\mR I_{a,n})=x+\mR I_n$ for all homogeneous $x\in T_{\mathcal I_a}$ of degree $n\in\mathbb N,$ we get $\dim R[\mathcal I]\le \dim R+1$, $\dim T_{\mathcal I}\le\dim R$ and $\dim S[\mathcal I]\le\dim R+1$ by Lemma \ref{dim}.
  		
  		Suppose $R$ is domain. Consider the ideal $P=\sum_{n\geqslant 1}I_nt^n\subset R[\mathcal I]$. Then $\height P\geq 1.$  Since $R[\mathcal I]/P\cong R,$ we have $P$ is a prime ideal in $R[\mathcal I].$ Therefore $\dim R[\mathcal I]\ge \dim R+1.$
  		Since $$\dim  S[\mathcal I]\geq \dim S[\mathcal I]_{t^{-1}}=\dim R[t,t^{-1}]=\dim R+1,$$ we have $\dim  S[\mathcal I]\ge \dim R+1.$
  \end{proof}

 

 This allows us to define the analytic spread $\ell(\mathcal I)$ of a filtration $\mathcal I$ by
 \begin{equation}\label{N3}
 \ell(\mathcal I)=\dim T_{\mathcal I}.
 \end{equation}
 This generalizes the classical definition of analytic spread of an ideal $I$, $\ell(I)=\dim T_{I}$
  where $T_{I}=R[It]/m_RR[It]$, since if $\mathcal I$ is the I-adic filtration $\mathcal I=\{I^n\}$, then 
 $T_{\mathcal I}=T_I$, so $\ell(\mathcal I)=\ell(I)$.


  From Lemma \ref{LemmaAS2} we obtain the following lemma.
 
 \begin{Lemma}\label{LemmaN4}
 Suppose that  
   $\mathcal I$ is an arbitrary filtration of a local ring $R$.
 Then 
 $$
 \ell(\mathcal I)\le \dim R,
 $$
 in agreement with the classical bound for ideals $I$, $\ell(I)\le\dim R$. 
 \end{Lemma}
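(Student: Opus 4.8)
The plan is short. By the definition \eqref{N3}, $\ell(\mathcal I)=\dim T_{\mathcal I}$, and Lemma \ref{LemmaAS2} already establishes that $\dim T_{\mathcal I}\le\dim R$ for an arbitrary, possibly non-Noetherian, filtration $\mathcal I$. So the proof consists of citing Lemma \ref{LemmaAS2}; the bound $\ell(\mathcal I)\le\dim R$ follows at once, and it recovers the classical inequality $\ell(I)\le\dim R$ of \eqref{eqN1} in the special case where $\mathcal I$ is the $I$-adic filtration, since there $T_{\mathcal I}=T_I$.

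It is worth recalling where the substance of Lemma \ref{LemmaAS2} lies, since that is where the actual work was done. The strategy is to reduce the non-Noetherian statement to the Noetherian one by passing to truncations: for each $a\ge 1$ the truncated filtration $\mathcal I_a$ is Noetherian, so by Remark \ref{RemAS1} there is an integer $d_a$ with $R[\mathcal I_a]$ finite as a module over $R[I_{a,d_a}t^{d_a}]$; reducing modulo $m_R$, the fiber ring $T_{\mathcal I_a}$ becomes finite over the fiber ring of an honest ideal, so the classical bound \eqref{eqN1} together with the standard dimension formula for Rees algebras gives $\dim T_{\mathcal I_a}\le\dim R$, \emph{uniformly} in $a$.

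The second ingredient is the passage to the limit, carried out by Lemma \ref{dim}: one has the graded maps $\chi_a\colon T_{\mathcal I_a}\rightarrow T_{\mathcal I}$ which are the identity on homogeneous elements of degree $\le a$, and any finite chain of distinct primes in $T_{\mathcal I}$ can be separated by finitely many homogeneous elements, all of which are hit by $\chi_a$ for $a$ sufficiently large; pulling back yields a chain of the same length in $T_{\mathcal I_a}$, whence its length is $\le\dim T_{\mathcal I_a}\le\dim R$. The only point that needs care in applying Lemma \ref{dim} is the hypothesis $\sup_a\dim T_{\mathcal I_a}<\infty$, but this is precisely what the classical bound for Noetherian filtrations supplies, so there is no genuine obstacle here; the nontrivial input is entirely contained in the already-proven Lemma \ref{LemmaAS2}.
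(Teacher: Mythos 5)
Your proof is correct and is exactly the paper's argument: the lemma is stated as an immediate consequence of Lemma \ref{LemmaAS2}, which gives $\dim T_{\mathcal I}\le\dim R$ for arbitrary filtrations, and $\ell(\mathcal I)=\dim T_{\mathcal I}$ by definition. Your recap of how Lemma \ref{LemmaAS2} is proved (truncations, Remark \ref{RemAS1}, and the limiting argument of Lemma \ref{dim}) accurately reflects the paper's route as well.
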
 
 
 
 

 
 Suppose that $I$ is an ideal in a local ring. Then we have the inequalities
 \begin{equation}\label{eqAS30}
 \mbox{ht}(I)\le \ell(I)\le \dim R.
 \end{equation}
 (proven for instance in \cite[Corollary 8.3.9]{HS}). An ideal for which the equality $\mbox{ht}(I)=\ell(I)$ holds is called equimultiple. 
 The inequalities (\ref{eqAS30}) continue to hold for Noetherian filtrations.

 \begin{Proposition}\label{PropAS11} Suppose that $\mathcal I$ is a Noetherian filtration in a local ring $R$.
  Then there exists $e>0$ such that
 $\ell(I_{em})=\ell(\mathcal I)$ for all $m>0$. In particular,
 $\mbox{ht}(\mathcal I)\le \ell(\mathcal I)$.
Further, 
  $\mbox{ht}(\mathcal I)\le \ell(\mathcal I)\le \dim R$.
\end{Proposition}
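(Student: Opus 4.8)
The plan is to reduce everything to the classical inequalities (\ref{eqAS30}) for a single, well-chosen ideal $I_{em}$ of the filtration; the Noetherian hypothesis is used only to make $R[\mathcal{I}]$ behave, after passing to a Veronese subalgebra, like the Rees algebra of one ideal. Concretely, since $\mathcal{I}$ is Noetherian, $R[\mathcal{I}]=\bigoplus_{n\ge 0}I_nt^n$ is a finitely generated $R$-algebra; I would let $e$ be a common multiple of the (positive) degrees of a finite set of homogeneous algebra generators. By the structure theory of finitely generated graded rings (the results of \cite{Bo} already cited for Remark~\ref{RemAS1}), for every $m>0$ --- $em$ being again a common multiple of the generator degrees --- the Veronese subring $R[\mathcal{I}]^{(em)}=\bigoplus_{j\ge 0}I_{emj}t^{emj}$ is generated over $R$ by its degree-$em$ component $I_{em}t^{em}$, and moreover $R[\mathcal{I}]$ is a finite module over it. Unravelling the first statement inside $R[t]$ gives $I_{emj}=(I_{em})^{j}$ for all $j\ge 1$, so $R[\mathcal{I}]^{(em)}$ is, up to regrading, the Rees algebra $R[I_{em}]$ of the ordinary ideal $I_{em}$.

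Next I would compare fiber cones. Fix $m>0$. The fiber cone of $I_{em}$ is $\bigoplus_{j\ge 0}(I_{em})^{j}/m_R(I_{em})^{j}$, which by the previous step equals $\bigoplus_{j\ge 0}I_{emj}/m_RI_{emj}$, that is, the $(em)$-th Veronese subring $T_{\mathcal{I}}^{(em)}$ of $T_{\mathcal{I}}$. Reducing modulo $m_R$ the statement that $R[\mathcal{I}]$ is a finite $R[\mathcal{I}]^{(em)}$-module shows that $T_{\mathcal{I}}$ is a finite $T_{\mathcal{I}}^{(em)}$-module, hence $\dim T_{\mathcal{I}}^{(em)}=\dim T_{\mathcal{I}}$, and therefore
\[
\ell(I_{em})=\dim T_{\mathcal{I}}^{(em)}=\dim T_{\mathcal{I}}=\ell(\mathcal{I}).
\]

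Finally I would read off the height bounds. By the definition of $\mbox{ht}(\mathcal{I})$ (which is legitimate by (\ref{dim*})), $\mbox{ht}(\mathcal{I})=\mbox{ht}(I_{em})$, so (\ref{eqAS30}) applied to the ordinary ideal $I_{em}$ gives $\mbox{ht}(\mathcal{I})=\mbox{ht}(I_{em})\le\ell(I_{em})=\ell(\mathcal{I})$, while $\ell(\mathcal{I})\le\dim R$ is Lemma~\ref{LemmaN4}. The one step that is more than bookkeeping is the Veronese standardization of the first paragraph, which is where Noetherianness really enters; the only subtlety is that a single $e$ must serve for all $m$ simultaneously, and this is automatic because a common multiple of the generator degrees stays a common multiple after multiplication by $m$. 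I do not anticipate a serious obstacle beyond correctly quoting this graded-ring fact.
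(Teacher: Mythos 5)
Your proposal is correct and follows essentially the same route as the paper: both invoke the Bourbaki structure results (Remark \ref{RemAS1}) to find $e$ so that the $em$-th Veronese of $R[\mathcal I]$ is the Rees algebra of $I_{em}$ with $R[\mathcal I]$ finite over it, then pass to fiber cones to get $\ell(I_{em})=\ell(\mathcal I)$ and apply the classical bounds (\ref{eqAS30}) to $I_{em}$. The only cosmetic difference is that the paper verifies the injection of fiber cones via the identity $m_RR[\mathcal I]\cap R[I_{em}t^{em}]=m_RR[I_{em}t^{em}]$, while you phrase it as the Veronese subring of $T_{\mathcal I}$; these amount to the same thing once $I_{emj}=(I_{em})^j$ is in hand.
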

 
 \begin{proof} Let $e>0$ be such that the conclusions of 
 Remark \ref{RemAS1} hold. Then
 $$
 m_RR[\mathcal I]\cap R[I_{em}t^{em}]=m_RR[\mathcal I_{em}t^{em}],
 $$
 so
 $$
 R[I_{em}t^{em}]/m_RR[I_{em}t^{em}]\subset R[\mathcal I]/m_RR[\mathcal I]
 $$
 is a finite inclusion of Noetherian rings, so
 $$
 \dim T_{I_{em}}=\dim R[I_{em}t^{em}]/m_RR[I_{em}t^{em}] = \dim T_{\mathcal I}.
 $$ 
 \end{proof}

 The condition of analytic spread zero has a simple ideal theoretic interpretation.

\begin{Lemma}\label{sp0} Suppose that  $\mathcal I=\{I_n\}$ is a filtration in a local ring $R$. Then the analytic spread
$\ell(\mathcal I)=0$ if and only if
\begin{equation}\label{sp1}
\mbox{For all $n>0$ and $f\in I_n$, there exists $m>0$ such that $f^m\in m_RI_{mn}$.}
\end{equation}
\end{Lemma}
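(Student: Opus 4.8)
The plan is to read off $\ell(\mathcal I)=\dim T_{\mathcal I}$ (equation (\ref{N3})) directly from the structure of the graded ring $T_{\mathcal I}=R[\mathcal I]/m_RR[\mathcal I]$. Writing $k=R/m_R$ and noting $m_RR[\mathcal I]=\bigoplus_{n\ge 0}(m_RI_n)t^n$, we have $T_{\mathcal I}=\bigoplus_{n\ge 0}(I_n/m_RI_n)t^n$, an $\NN$-graded ring whose degree-zero part is the field $k$. Its irrelevant ideal $(T_{\mathcal I})_+=\bigoplus_{n\ge 1}(I_n/m_RI_n)t^n$ is then a prime ideal with $T_{\mathcal I}/(T_{\mathcal I})_+\cong k$, so the nilradical $\sqrt 0$ of $T_{\mathcal I}$ is contained in $(T_{\mathcal I})_+$. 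I would prove the chain of equivalences: $\ell(\mathcal I)=0$ $\Leftrightarrow$ $(T_{\mathcal I})_+$ equals the nilradical of $T_{\mathcal I}$ $\Leftrightarrow$ every homogeneous element of positive degree in $T_{\mathcal I}$ is nilpotent $\Leftrightarrow$ (\ref{sp1}).

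The last equivalence (with (\ref{sp1})) is routine: a homogeneous element of degree $n\ge 1$ in $T_{\mathcal I}$ is the class $\overline{ft^n}$ of some $f\in I_n$, and since $f^m\in I_n^m\subset I_{mn}$ we have $(\overline{ft^n})^m=\overline{f^mt^{mn}}$, which vanishes in $T_{\mathcal I}$ exactly when $f^m\in m_RI_{mn}$; thus ``$\overline{ft^n}$ is nilpotent'' says precisely that there is $m>0$ with $f^m\in m_RI_{mn}$. For the equivalence ``$(T_{\mathcal I})_+=\sqrt 0$'' $\Leftrightarrow$ ``every homogeneous element of $(T_{\mathcal I})_+$ is nilpotent'', one direction is trivial, and the other uses that the nilradical of a graded ring is a graded ideal together with the fact that a finite sum of nilpotents is nilpotent, so nilpotence of every positive-degree homogeneous piece forces all of $(T_{\mathcal I})_+$ into $\sqrt 0$; combined with $\sqrt 0\subset (T_{\mathcal I})_+$ this gives equality. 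Finally, if $(T_{\mathcal I})_+=\sqrt 0$ then $T_{\mathcal I}/\sqrt 0\cong k$ has dimension $0$, so $\ell(\mathcal I)=\dim T_{\mathcal I}=\dim(T_{\mathcal I}/\sqrt 0)=0$.

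The point needing real work is the converse of the last implication: if some homogeneous $\bar f\in (T_{\mathcal I})_n$ with $n\ge 1$ is \emph{not} nilpotent, then $\dim T_{\mathcal I}\ge 1$. For this I would pass to the homogeneous localization $(T_{\mathcal I})_{\bar f}$, a nonzero $\ZZ$-graded ring in which $\bar f$ is a unit of degree $n$. It contains the Laurent polynomial subring $A:=\bigoplus_{j\in\ZZ}((T_{\mathcal I})_{\bar f})_{nj}=((T_{\mathcal I})_{\bar f})_0[\bar f,\bar f^{-1}]$, whose dimension is at least $1$ because its degree-zero ring is nonzero. Moreover every homogeneous $h\in (T_{\mathcal I})_{\bar f}$, say of degree $d$, satisfies $h^n\in ((T_{\mathcal I})_{\bar f})_{nd}\subset A$, hence is integral over $A$; since $(T_{\mathcal I})_{\bar f}$ is generated over $A$ by its homogeneous elements, $A\subset (T_{\mathcal I})_{\bar f}$ is an integral extension, and therefore $\dim T_{\mathcal I}\ge \dim(T_{\mathcal I})_{\bar f}=\dim A\ge 1$. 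Assembling the equivalences then gives the lemma. The main obstacle is keeping this localization/integrality argument clean when $\deg\bar f>1$ (the case $\deg\bar f=1$ is immediate since $(T_{\mathcal I})_{\bar f}$ is itself a graded Laurent ring); the observation $h^n\in A$ is exactly what reduces the general case to the Laurent-ring case.
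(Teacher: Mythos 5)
Your proposal is correct. The paper argues in the Rees ring $A=R[\mathcal I]$ itself: $\ell(\mathcal I)=0$ iff every minimal prime of $m_RA$ is a maximal ideal of $A$; since $m_RA$ is homogeneous its minimal primes are homogeneous (citing Zariski--Samuel), and the only graded maximal ideal of $A$ is $m_R\oplus I_1\oplus I_2\oplus\cdots$, so $\ell(\mathcal I)=0$ iff $\sqrt{m_RA}=m_R\oplus I_1\oplus I_2\oplus\cdots$, which is exactly (\ref{sp1}). You instead work in the fiber ring $T_{\mathcal I}$ and replace that citation by a direct proof of the key graded fact: a non-nilpotent homogeneous element $\bar f$ of positive degree forces $\dim T_{\mathcal I}\ge 1$, via the $\ZZ$-graded localization $T_{\bar f}$ being integral over the Laurent subring $\bigoplus_j (T_{\bar f})_{nj}\cong (T_{\bar f})_0[\bar f,\bar f^{-1}]$ (note the gradedness is what guarantees $\bar f$ satisfies no relation over $(T_{\bar f})_0$, so this really is a Laurent polynomial ring of dimension $\ge 1$). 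The translation of nilpotence of $\overline{ft^n}$ into ``$f^m\in m_RI_{mn}$ for some $m$'' is identical in both treatments. What the paper's route buys is brevity, at the cost of an external reference; what yours buys is a self-contained argument that makes the dimension-theoretic content ($\dim=0$ iff the irrelevant ideal is the nilradical, for an $\NN$-graded ring with a field in degree zero) explicit. Both are sound.
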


\begin{proof} Let $A=R[\mathcal I]$. We have that $\ell(\mathcal I)=0$ if and only if $\dim A/m_RA=0$ which holds if and only if all minimal prime ideals of $m_RA$ are maximal ideals of $A$. Since $m_RA$ is a homogeneous ideal, all minimal prime ideals  of $m_RA$ are homogeneous (\cite[Lemma 3, page 153]{ZS2}). The only graded maximal ideal of $A$ is $m_R\oplus I_1\oplus I_2\oplus\cdots$. Thus $\ell(\mathcal I)=0$ if and only if $\sqrt{m_RA}=m_R\oplus I_1\oplus I_2\oplus\cdots$, which holds if and only if the condition (\ref{sp1}) holds.
\end{proof}

 \section{Divisorial filtrations}\label{SecDiv}

 Let $R$ be a  local domain of dimension $d$ with quotient field $K$.  Let $\nu$ be a discrete valuation of $K$ with valuation ring $\mathcal O_{\nu}$ and maximal ideal $m_{\nu}$.  Suppose that $R\subset \mathcal O_{\nu}$. Then for $n\in \NN$, define valuation ideals
$$
I(\nu)_n=\{f\in R\mid \nu(f)\ge n\}=m_{\nu}^n\cap R.
$$
 
  A divisorial valuation of $R$ (\cite[Definition 9.3.1]{HS}) is a valuation $\nu$ of $K$ such that if $\mathcal O_{\nu}$ is the valuation ring of $\nu$ with maximal ideal $\mathfrak m_{\nu}$, then $R\subset \mathcal O_{\nu}$ and if $\mfp=\mathfrak m_{\nu}\cap R$ then $\mbox{trdeg}_{\kappa(\mfp)}\kappa(\nu)={\rm ht}(\mfp)-1$, where $\kappa(\mfp)$ is the residue field of $R_{\mfp}$ and $\kappa(\nu)$ is the residue field of $\mathcal O_{\nu}$. If $\nu$ is a divisorial valuation of R such that $m_R=m_{\nu}\cap R$, then $\nu$ is called an $m_R$-valuation.
 
 By \cite[Theorem 9.3.2]{HS}, the valuation ring of every divisorial valuation $\nu$ is Noetherian, hence is a  discrete valuation. 
 Suppose that  $R$ is an excellent local domain. Then a valuation $\nu$ of the quotient field $K$ of $R$ which is nonnegative on $R$ is a divisorial valuation of $R$ if and only if the valuation ring $\mathcal O_{\nu}$  is essentially of finite type over $R$ (\cite[Lemma 6.1]{CS}).




 Suppose that $s\in \NN$.
An $s$-valuation of $R$ is a divisorial valuation of $R$ such that $\dim R/p=s$ where $p=\mm_{\nu}\cap R$. 

An integral  divisorial filtration of $R$ (which we will refer to as a divisorial filtration in this paper) is a filtration $\mathcal I=\{I_m\}$ such that  there exist divisorial valuations $\nu_1,\ldots,\nu_r$ and $a_1,\ldots,a_r\in \ZZ_{\ge 0}$ such that for all $m\in \NN$,
$$
I_m=I(\nu_1)_{ma_1}\cap\cdots\cap I(\nu_r)_{ma_r}.
$$
If $\mathcal I$ is a divisorial filtration, then the ideals $I_m=\overline{I_m}$ are integrally closed for all $m\geq 1$. In fact,
the Rees algebra
$R[\mathcal I]=\sum_{n\ge 0}I_nt^n$ is integrally closed in $R[t]$. This is proven in \cite[Lemma 5.8]{C3}.
\cite[Lemma 5.8]{C3} is stated for divisorial $m_R$-filtrations but the proof is valid for arbitrary divisorial filtrations.

An integral $s$-divisorial filtration of $R$ (which we will refer to as an $s$-divisorial filtration in this paper) is a filtration $\mathcal I=\{I_m\}$ such that  there exist $s$-valuations $\nu_1,\ldots,\nu_r$ and $a_1,\ldots,a_r\in \ZZ_{\ge 0}$ such that for all $m\in \NN$,
\begin{equation}\label{eqDF}
I_m=I(\nu_1)_{ma_1}\cap\cdots\cap I(\nu_r)_{ma_r}.
\end{equation}


   \begin{Theorem}\label{ThmN21}  Suppose that $R$ is a $d$-dimensional excellent local domain and $\mathcal I=\{I_n\}$ is
   a divisorial filtration of $m_R$-primary ideals
    on $R$. Then $\ell(\mathcal I)=d$. 
      \end{Theorem}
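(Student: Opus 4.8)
The plan is as follows. The upper bound $\ell(\mathcal I)\le d$ is Lemma~\ref{LemmaN4}, so everything is in proving $\ell(\mathcal I)\ge d$. First I would reduce to the essential case. Writing $I_n=I(\nu_1)_{na_1}\cap\cdots\cap I(\nu_r)_{na_r}$ and discarding the $i$ with $a_i=0$, the identity $m_R=\sqrt{I_1}=\bigcap_i(m_{\nu_i}\cap R)$ forces each surviving $\nu_i$ to be centered at $m_R$, so $\mathcal I$ is a divisorial filtration of $0$-valuations; when $\mathcal I$ is Noetherian one is done by Proposition~\ref{PropAS11} ($R[\mathcal I]$ is module-finite over $R[I_{em}t^{em}]$ and $I_{em}$ is $m_R$-primary, so $\ell(\mathcal I)=\ell(I_{em})=d$), so the real task is the possibly non-Noetherian case.

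Next I would pass to the completion and then to the normalization: completing changes neither $R[\mathcal I]\otimes_Rk$ (since $R[\mathcal I]\otimes_R\widehat R=\widehat R[\mathcal I\widehat R]$ and $m_R\widehat R=m_{\widehat R}$) nor $\ell(\mathcal I)$, and one then replaces $R$ by its normalization (finite, as $R$ is excellent) localized at a maximal ideal over $m_R$, the divisorial and $m_R$-primary hypotheses persisting. So assume $R$ is a complete normal local domain. Choose a projective birational $\pi\colon X\to\operatorname{Spec}R$, with $X$ normal, that is an isomorphism over $\operatorname{Spec}R\setminus\{m_R\}$ and on which each $\nu_i$ is the order valuation of a prime divisor $E_i$; set $D=\sum_i a_iE_i$ (after a further blowup, Cartier). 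Normality of $R$ gives $\Gamma(X,\mathcal O_X)=R$, and then one checks $\Gamma(X,\mathcal O_X(-nD))=\{f\in R:\nu_i(f)\ge na_i\ \forall i\}=I_n$, so that
$$R[\mathcal I]=\bigoplus_{n\ge0}\Gamma(X,\mathcal O_X(-nD)).$$
Writing $\phi\colon Y=\operatorname{Proj}R[\mathcal I]\to\operatorname{Spec}R$ for the structure morphism, one has $\phi^{-1}(m_R)=\operatorname{Proj}T_{\mathcal I}$, and since $T_{\mathcal I}$ is graded with degree-zero part $k$ it follows that $\ell(\mathcal I)=\dim T_{\mathcal I}\ge\dim\phi^{-1}(m_R)+1$; so it suffices to prove $\dim\phi^{-1}(m_R)\ge d-1$.

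Here is the heart of the argument. The exceptional fibre $\pi^{-1}(m_R)$ has dimension $d-1$ (it contains the $E_i$ and is a proper closed subset of the irreducible $d$-fold $X$). The homogeneous pieces of $R[\mathcal I]$ generate $\mathcal O_X(-nD)$ over a dense open $U\supseteq X\setminus\pi^{-1}(m_R)$, yielding a morphism $\Psi\colon U\to Y$ with $\phi\circ\Psi=\pi|_U$, so $\Psi(U\cap\pi^{-1}(m_R))\subseteq\phi^{-1}(m_R)$. What must be shown is that $\Psi$ does not collapse $\pi^{-1}(m_R)$, i.e.\ that this image is again $(d-1)$-dimensional, and this is exactly where the hypothesis that the $I_n$ are $m_R$-\emph{primary} is used: since $\nu_i(m_R)\ge 1$ for all $i$ we get $m_R^{na}\subseteq I_n$ with $a=\max_i a_i$, hence $R[m_R^at]\subseteq R[\mathcal I]$ and the induced rational map $Y\dashrightarrow\operatorname{Bl}_{m_R}(\operatorname{Spec}R)$ over $\operatorname{Spec}R$ is birational and dominant; combined with an asymptotic Riemann--Roch estimate on $X$ — equivalently, with the existence and strict positivity of the divisorial multiplicity $\lim_n d!\,\ell_R(R/I_n)/n^d$, valid because the $I_n$ are $m_R$-primary — this forces $\overline{\Psi(U\cap\pi^{-1}(m_R))}$ to have dimension $d-1$. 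Thus $\dim\phi^{-1}(m_R)\ge d-1$, so $\ell(\mathcal I)\ge d$.

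I expect the last point to be the main obstacle. Noetherian approximation (Lemmas~\ref{dim} and~\ref{LemmaAS2}) only supplies the upper bound $\ell(\mathcal I)\le d$; for the lower bound one genuinely needs an asymptotic positivity input, since it must be ruled out that forming the (possibly non-Noetherian) Rees algebra $R[\mathcal I]$ collapses the exceptional fibre over $m_R$ below dimension $d-1$. It is precisely the $m_R$-primary hypothesis that prevents this — it is what makes the relevant volume strictly positive — and Example~\ref{ExampleN6} shows that without it the analogue of this phenomenon, and even $\ell(\mathcal I)\ge\operatorname{ht}(\mathcal I)$, can fail badly.
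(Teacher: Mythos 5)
Your reductions are fine: the Noetherian case via Proposition~\ref{PropAS11}, the observation that every surviving $\nu_i$ must be an $m_R$-valuation, the passage to the normalization, and the reduction to showing $\dim\operatorname{Proj}(T_{\mathcal I})\ge d-1$ all work, and your geometric set-up (realizing the $\nu_i$ as prime divisors $E_i$ on a normal blowup $X$, with an ample $-A$ in the background) is the same as the paper's. The gap is in the step you yourself flag as the heart of the matter: the claim that strict positivity of the volume $\lim d!\,\ell_R(R/I_n)/n^d$, together with the birational comparison with $\operatorname{Bl}_{m_R}$, forces $\overline{\Psi(U\cap\pi^{-1}(m_R))}$ to be $(d-1)$-dimensional. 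That mechanism is not the right one. Take $R$ regular and $I_n=m_R^{\lceil n\alpha\rceil}$ with $\alpha$ irrational: this is a filtration of $m_R$-primary ideals built from a single $m_R$-valuation, each $I_n\mathcal O_X$ is globally generated on $X=\operatorname{Bl}_{m_R}(\operatorname{Spec}R)$, the volume equals $\alpha^d e(m_R)>0$, and the map to $\operatorname{Bl}_{m_R}$ is the identity; yet $\ell(\mathcal I)=0$ by Lemma~\ref{sp0}, since if $\lceil n\alpha\rceil = n\alpha+\epsilon$ with $\epsilon>0$ then $f^m\in m_R^{m\lceil n\alpha\rceil}\subset m_RI_{mn}$ as soon as $m\epsilon\ge 2$. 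Every input your argument uses is present here while the conclusion fails; the hypothesis that actually carries the theorem is the \emph{integrality} of the $a_i$, which enters your sketch only incidentally (to make $D$ Cartier). Concretely, your $\Psi$ is not even defined at the generic point of the exceptional divisor in this example, because the candidate homogeneous ideal $\bigoplus_n\{f\in I_n : \operatorname{ord}_E(f)>\lceil n\alpha\rceil\}$ fails to be prime: $\lceil a\alpha\rceil+\lceil b\alpha\rceil$ can exceed $\lceil (a+b)\alpha\rceil$.

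The paper's proof replaces this step by an explicit valuation-theoretic construction. It chooses the index $i_0$ with $\alpha b_{i_0}=a_{i_0}$, a general closed point $q\in E_{i_0}$, and a regular system of parameters $x_1,\dots,x_d$ at $q$ with $x_1$ a local equation of $E_{i_0}$; the flag of regular primes $P_j=(x_1,\dots,x_j)$ defines auxiliary divisorial valuations $\omega_1=\nu_{i_0},\omega_2,\dots,\omega_d$, and the graded ideals $C_j=\bigoplus_m J(\omega_{j+1})_{a_{i_0}m+1}\cap J_m$ give a chain of primes of $B=\bigoplus_m J_m$ containing $m_R$. Primality of $C_j$ is exactly where $a_{i_0}\in\ZZ$ is used ($\omega(f)\ge a_{i_0}m$, $\omega(g)\ge a_{i_0}n$ and $\omega(fg)\ge a_{i_0}(m+n)+1$ force $\omega(f)\ge a_{i_0}m+1$ or $\omega(g)\ge a_{i_0}n+1$), and the distinctness of consecutive $C_j$ is obtained from global generation of $\mathcal I_{Z_j}\otimes\mathcal O_X(-mcA)$, i.e.\ from ampleness of $-A$ rather than from a volume estimate; one then contracts along the integral extension $R[\mathcal I]\subset B$. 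To repair your argument you would need to produce such a chain of $d$ relevant primes of $T_{\mathcal I}$ explicitly; even granting that $\Psi$ is defined generically on $\pi^{-1}(m_R)$ with $(d-1)$-dimensional image, bounding $\dim\operatorname{Proj}(T_{\mathcal I})$ from below by the dimension of that image requires justification in this non-Noetherian, non-finite-type setting.
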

   
   \begin{proof} There exist $m_R$-valuations 
   $\nu_1,\ldots,\nu_t$  and $a_1,\ldots, a_t\in \ZZ_{>0}$ such that $\mathcal I=\{I_n\}$ where 
   $I_n=I(\nu_1)_{a_1n}\cap \cdots \cap I(\nu_t)_{a_tn}$ for $n\ge 0$, with
   $I(\nu_i)_{m}=\{f\in R\mid \nu_i(f)\ge m\}$.
   
   Let $S$ be the normalization of $R$ in the quotient field of $R$. Let $\mathfrak m_1,\ldots,\mathfrak m_u$ be the maximal ideals of $S$. Let $J(\nu_i)_m=\{f\in S\mid \nu_i(f)\ge m\}$. For each $i$, there exists $\sigma(i)$ with $1\le \sigma(i)\le u$ such that the ideals $J(\nu_i)_m$ are $\mathfrak m_{\sigma(i)}$-primary for all $m$, and $J(\nu_i)_1=\mathfrak m_{\sigma(i)}$. That is, $\nu_i$ is an $\mathfrak m_{\sigma(i)}$-valuation.
   For $n\in \NN$, let 
   $$
   J_n=J(\nu_1)_{a_1n}\cap \cdots \cap J(\nu_t)_{a_tn}
   $$
   so that $J_n\cap R=I_n$.
   
   Let $\pi:X\rightarrow \mbox{Spec}(S)$ be the blow up of an ideal $K$ such that $K_{\mathfrak m_i}$ is a $(\mathfrak m_i)_{\mathfrak m_i}$-primary ideal  for $1\le i\le u$, $X$ is normal and there exist prime divisors $E_i$ on $X$ such that
   the valuation rings $\mathcal O_{\nu_i}=\mathcal O_{X,E_i}$ for $1\le i\le t$. Let $A$ be the effective Cartier divisor on $X$ such that $\mathcal O_X(-A)=K\mathcal O_X$, so that $-A$ is ample on $X$. Write $A=\sum_{i=1}^sb_iE_i$ where $s\ge t$, $E_1,\ldots,E_s$ are prime Weil divisors with $\mathcal O_{\nu_i}=\mathcal O_{X,E_i}$ for $1\le i\le t$ and $b_i\in \ZZ_{>0}$ for all $i$.
   
   There exists a unique $\alpha\in \QQ_{>0}$ such that $\alpha b_i\ge a_i$ for $1\le i\le t$ and further, there exists an index $i_0$ such that
   $\alpha b_{i_0}=a_{i_0}$. Write $\alpha=\frac{c}{d}$ with $c,d\in \ZZ_{>0}$. Then $mcb_iE_i\ge mda_iE_i$ for $1\le i\le t$ and $mcb_{i_0}E_{i_0}=mda_{i_0}E_{i_0}$ for all $m\geq 0$. Thus
   $$
   \mathcal O_X(-mcA)\subset \mathcal O_X(-\sum_{i=1}^tmda_iE_i)
   $$
   for $m\ge 0$, so that $\Gamma(X,\mathcal O_X(-mcA))\subset J_{md}$ for all $m\in \NN$.

 Since $X$ is normal and $E_{i_0}$ has codimension 1 in $X$, there exists a closed point $q\in E_{i_0}$ such that $E_{i_0}$ is the only irreducible component of $A$ which contains $q$ in its support and $\mathcal O_{X,q}$ and $\mathcal O_{E_{i_0},q}$ are regular local rings.  Let $x_1=0$ be a local equation of $E_{i_0}$ at $q$ and extend $x_1$ to a regular system of parameters $x_1,x_2,\ldots,x_d$ in $\mathcal O_{X,q}$. Let $P_j=(x_1,x_2,\ldots,x_j)$ for $1\le j\le d$. $P_j$ are regular primes in $\mathcal O_{X,q}$ (that is, $\mathcal O_{X,q}/P_j$ is a regular local ring for all $j$). Thus the rule $\omega_j(g)=\mbox{ord}_{P_j}(g)$ for $g\in \mathcal O_{X,q}$ defines a discrete valuation on the quotient field of $R$, which is a $\mathfrak m_{\sigma(i_0)}$-valuation
 (since $E_{i_0}$ is contracted to $m_{\sigma(i_0)}$).
 For $1\le j\le d$ let
 $J(\omega_j)_n=\{f\in S\mid \omega_j(f)\ge n\}$. Then $J(\omega_j)_n=P_j^n\cap S$ for all $n\ge 0$. We have that $\omega_1$ is the valuation $\nu_{i_0}$. Since $x_1,\ldots,x_d$ is a regular system of parameters, 
 \begin{equation}\label{C1}
 P_1^m\cap P_j^{m+1}=P_1^mP_j\mbox{ for all $m\in \NN$.}
 \end{equation}
  Let $Z_j$ be the closed subvariety of $X$ such that its ideal sheaf satisfies $(\mathcal I_{Z_j})_q=P_j$ for $1\le j\le d$. Then $Z_1=E_{i_0}$, $Z_d=q$, $\dim Z_j=d-j$ for all $j$ and $\pi(Z_j)=\mathfrak m_{\sigma(i)}$ for all $j$. For $m\ge 0$ and $1\le j\le d$, we have 
  $$
  \begin{array}{lll}
  (\mathcal I_{Z_j}\otimes \mathcal O_X(-mcA))_q
  &=&(\mathcal I_{Z_j}\otimes \mathcal O_X(-mcb_{i_0}E_{i_0}))_q  
   =(\mathcal I_{Z_j}\otimes \mathcal O_X(-mda_{i_0}E_{i_0}))_q \\ 
     &=&P_1^{mda_{i_0}}P_j=P_1^{mda_{i_0}}\cap P_j^{mda_{i_0}+1}.
     \end{array}
  $$
   Observe that  we have inclusions of sheaves
  $$
  \mathcal I_{Z_1}\otimes \mathcal O_X(-mcA)\subset \mathcal I_{Z_2}\otimes \mathcal O_X(-mcA)\subset \cdots\subset
  \mathcal I_{Z_d}\otimes\mathcal O_X(-mcA)\subset \mathcal O_X(-mcA)\subset \mathcal O_X.
  $$
  Since $-A$ is ample, for $m\gg 0$, $\mathcal I_{Z_j}\otimes\mathcal O_X(-mcA)$ is generated by global sections for $1\le j\le d$, so that
  \begin{equation}\label{C2}
  \begin{array}{lll}
  \Gamma(X,\mathcal I_{Z_j}\otimes\mathcal O_X(-mcA))\mathcal O_{X,q}&=&P_1^{mda_{i_0}}\cap P_j^{mda_{i_0}+1}\mbox{ and}\\
  \Gamma(X,\mathcal O_X(-mcA))\mathcal O_{X,q}&=&P_1^{mda_{i_0}}.
  \end{array}
  \end{equation}
  We have inclusions 
  $$
  \begin{array}{l}
  \Gamma(X,\mathcal I_{Z_1}\otimes \mathcal O_X(-mcA))\subset \Gamma(X,\mathcal I_{Z_2}\otimes \mathcal O_X(-mcA))\subset\\
  \cdots\subset \Gamma(X,\mathcal I_{Z_d}\otimes\mathcal O_X(-mcA))\subset \Gamma(X,\mathcal O_X(-mcA))\subset J_{md}.
  \end{array}
  $$
    By (\ref{C2}), for $1\le j\le d-1$, there exists 
  $f_j\in \Gamma(X,\mathcal I_{Z_{j+1}}\otimes \mathcal O_X(-mcA))\subset J_{md}$  such that $f_j\in P_{j+1}^{mda_{i_0}+1}$ but $f_j\not\in P_j^{mda_{i_0}+1}$,  so that
  $f_j\in 
  P_{j+1}^{mda_{i_0}+1}\cap J_{md}=J(\omega_{j+1})_{mda_{i_0}+1}\cap J_{md}$, but $f_j\not\in P_j^{mda_{i_0}+1}\cap J_{md}=J(\omega_j)_{mda_{i_0}+1}\cap J_{md}$ and there exists $f_{d}\in J_{dm}$ such that $f_{d}\not\in J(\omega_d)_{mda_{i_0}+1}\cap J_{dm}$. 
  
  Let $B=\oplus_{n\ge 0}J_n$, which is a graded ring. 
  Let 
  $$
  C_j=\oplus_{m\ge 0}J(\omega_{j+1})_{a_{i_0}m+1}\cap J_m
  $$
   for $0\le j\le d-1$ and 
   $$
   C_d=\mathfrak m_{\sigma(i_0)}\oplus J_1\oplus J_2\oplus\cdots.
   $$
 We will now show that the ideals $C_j$ are prime ideals in $B$. First observe that none of the  $C_j$ are equal to $B$ since 
 $C_j\cap S=J(\omega_{j+1})_1=\mathfrak m_{\sigma(i_0)}$ for $1\le j\le d-1$ and $C_d\cap S=\mathfrak m_{\sigma(i_0)}$.
 Suppose $1\le j\le d-1$ and $f\in J_m$, $g\in J_n$ are such that $fg\in J(\omega_{j+1})_{a_{i_0}(m+n)+1}$. 
 Then $\omega_{j+1}(fg)\ge a_{i_0}(m+n)+1$. We have that $J_m\subset J(\nu_{i_0})_{a_{i_0}m}\subset J(\omega_{j+1})_{a_{i_0}m}$ so that $\omega_{j+1}(f)\ge a_{i_0}m$. Similarly, $\omega_{j+1}(g)\ge a_{i_0}n$. Thus
 either $\omega_{j+1}(f)\ge a_{i_0}m+1$ or $\omega_{j+1}(g)\ge a_{i_0}n+1$, so that $f\in J(\omega_{j+1})_{a_{i_0}m+1}\cap J_m$ or $g\in J(\omega_{j+1})_{a_{i_0}n+1}\cap J_n$.   Thus the $C_j$ are prime ideals. 
 
 We found $f_j\in C_j\setminus C_{j-1}$ for $1\le j\le d$. Thus
 $$
 C_0\subset C_1\subset C_2\subset \cdots\subset C_d
 $$
 is a chain of distinct prime ideals in $B$.
 
 There is a natural inclusion of graded rings $R[\mathcal I]=\oplus_{n\ge 0}I_n\subset B=\oplus_{n\ge 0}J_n$. We will now show that $B$ is integral over $R[\mathcal I]$. For $a\in \ZZ_{>0}$, let $R[\mathcal I]_a$ be the $a$-th truncation of $R[\mathcal I]$ and $B_a$ be the $a$-th truncation of $B$, so that $R[\mathcal I]_a$ is the subalgebra of $R[\mathcal I]$ generated by $\oplus_{n\le a}I_n$ and $B_a$ is the subalgebra of $B$ generated by $\oplus_{n\le a}J_n$. It suffices to show that homogeneous elements of $B$ are integral over $R[\mathcal I]$.  Suppose that $f\in J_a$ for some $a$.
 Then $f\in B_a$. Let $0\ne x$ be in the conductor of $S$ over $R$. Then $xJ_n\subset I_n$ for all $n$ since $I_n=J_n\cap R$.
Thus $xB_a\subset R[\mathcal I]_a$, so $f^i\in \frac{1}{x}R[\mathcal I]_a$ for all $i\in \NN$, and so the algebra 
$R[\mathcal I]_a[f]\subset \frac{1}{x}R[\mathcal I]_a$. Since $\frac{1}{x}R[\mathcal I]_a$ is a finitely generated $R[\mathcal I]_a$-module and $R[\mathcal I]_a$ is a Noetherian ring, the ring $R[\mathcal I]_a[f]$ is a finitely generated $R[\mathcal I]_a$-module, so that $f$ is integral over $R[\mathcal I]_a$.

We have a chain of prime ideals 
$$
Q_0\subset Q_1\subset Q_2\subset \cdots\subset Q_d
$$
in $R[\mathcal I]$ where $Q_i:=C_i\cap R[\mathcal I]$. The $Q_i$ are all distinct since the $C_i$ are all distinct and $B$ is integral over $R[\mathcal I]$ (by \cite[Theorem A.6 (b)]{BH}). It remains to show that $m_RR[\mathcal I]\subset Q_0$, so that 
$\dim R[\mathcal I]/m_RR[\mathcal I]\ge d$. Since this is the maximum possible dimension of $R[\mathcal I]/m_RR[\mathcal I]$ by Lemma \ref{LemmaAS2}, we have that $\ell(\mathcal I)=d$.

We now show that $m_RR[\mathcal I]\subset Q_0$. First we observe that if $g\in m_R$ then $\nu_{i_0}(g)\ge 1$ since $\nu_{i_0}$ is an $m_R$-valuation. Suppose that $f\in m_RI_n$. Then $f=\sum g_kf_k$ with $g_k\in m_R$ and $f_k\in I_n$. Thus $\nu_{i_0}(g_jf_j)\ge  na_{i_0}+1$ for all $j$ so that $f\in I(\nu_{i_0})_{na_{i_0}+1}$ and thus $f\in I(\nu_{i_0})_{na_{i_0}+1}\cap I_n$. Since $\omega_1$ is the valuation $\nu_{i_0}$, we have $m_RR[\mathcal I]\subset Q_0$.

   \end{proof}

\begin{Proposition}{\label{persistence}}
	Let $R$ be a local domain and $\mathcal I=\{I_n\}$ be a divisorial filtration in $R$ where $I_n=\bigcap_{i=1}^rI(\nu_i)_{na_i}$ for all $n\geq 1.$ Let $\mR\in\Ass(R/I_1)$. Then $\mR\in\Ass(R/\overline{I_{a,n}})$ for all $a, n\geq 1$ where $\mathcal I_a=\{I_{a,n}\}$ is the $a$-th truncated filtration of $\mathcal I.$ In particular, $m_R\in \mbox{Ass}(R/I_n)$ for all $n\ge 1$.
\end{Proposition}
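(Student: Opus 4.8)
The plan is to reduce the statement to one elementary fact about associated primes and to invoke the divisorial presentation only at the very last step. The fact is: for a proper ideal $J$ of a local ring $R$, one has $m_R\in\Ass(R/J)$ if and only if there are $h\in R\setminus J$ and an integer $N\geq 1$ with $m_R^{\,N}h\subseteq J$. Indeed, given such $h$, the colon $(J:h)$ is proper (as $h\notin J$) and contains $m_R^{\,N}$, hence is $m_R$-primary, and the $R$-module injection $R/(J:h)\hookrightarrow R/J$ by multiplication by $h$ gives $m_R\in\Ass(R/J)$; the converse is immediate with $N=1$. I will apply this with $J=\overline{I_{a,n}}$ for arbitrary fixed $a,n\geq 1$.

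From $m_R\in\Ass(R/I_1)$ I would choose $f\in R\setminus I_1$ with $m_Rf\subseteq I_1$. Since $\mathcal I_a=\{I_{a,n}\}$ is a filtration and $I_{a,1}=I_1$ (because $a\geq 1$), we have $I_1^{\,n}=I_{a,1}^{\,n}\subseteq I_{a,n}\subseteq\overline{I_{a,n}}$, so
$$
m_R^{\,n}f^{\,n}=(m_Rf)^{n}\subseteq I_1^{\,n}\subseteq\overline{I_{a,n}}.
$$
By the criterion (with $h=f^{\,n}$, $N=n$) it now suffices to check that $f^{\,n}\notin\overline{I_{a,n}}$.

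For this I bring in the valuations. Write $I_m=I(\nu_1)_{ma_1}\cap\cdots\cap I(\nu_r)_{ma_r}$, discarding any $\nu_i$ with $a_i=0$; not all $a_i$ vanish since $I_1\neq R$. Each $I(\nu_i)_{a_i}$ is $\mathfrak p_i$-primary with $\mathfrak p_i=\mathfrak m_{\nu_i}\cap R$, so $R/I_1$ embeds in $\bigoplus_iR/I(\nu_i)_{a_i}$ and $\Ass(R/I_1)\subseteq\{\mathfrak p_1,\dots,\mathfrak p_r\}$; hence $m_R=\mathfrak p_i$ for some $i$, so at least one $\nu_i$ is an $m_R$-valuation. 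Next, for any $\nu_j$ with $\mathfrak p_j\neq m_R$ pick $g\in m_R\setminus\mathfrak p_j$; then $\nu_j(g)=0$ and $gf\in m_Rf\subseteq I_1\subseteq I(\nu_j)_{a_j}$ give $\nu_j(f)=\nu_j(gf)\geq a_j$. Since $f\notin I_1=\bigcap_iI(\nu_i)_{a_i}$, we have $\nu_{i_0}(f)<a_{i_0}$ for some $i_0$, and by the previous sentence $\nu_{i_0}$ must be one of the $m_R$-valuations, so $\nu_{i_0}(f)\leq a_{i_0}-1$. Therefore $\nu_{i_0}(f^{\,n})=n\,\nu_{i_0}(f)\leq n(a_{i_0}-1)<na_{i_0}$, whereas $\overline{I_{a,n}}\subseteq\overline{I_n}=I_n\subseteq I(\nu_{i_0})_{na_{i_0}}$ (the $I_n$ being integrally closed for a divisorial filtration), and every element of $I(\nu_{i_0})_{na_{i_0}}$ has $\nu_{i_0}$-value at least $na_{i_0}$. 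Hence $f^{\,n}\notin\overline{I_{a,n}}$, which proves $m_R\in\Ass(R/\overline{I_{a,n}})$. Taking $a\geq n$, where $\overline{I_{a,n}}=\overline{I_n}=I_n$, yields the final assertion $m_R\in\Ass(R/I_n)$ for all $n\geq 1$.

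There is no serious obstacle here; the only step that genuinely uses the divisorial hypothesis is the containment $\overline{I_{a,n}}\subseteq I(\nu_{i_0})_{na_{i_0}}$ — that passing to the integral closure of a truncation does not escape the $\nu_{i_0}$-valuation ideal — and this is immediate from $I_n=\bigcap_iI(\nu_i)_{na_i}$ together with the integral closedness of the $I_n$. The argument deliberately never needs the exact value of $\nu_{i_0}(I_1)$; it is enough that $\nu_{i_0}(f)<a_{i_0}$, which is forced by $f\notin I_1$ and $m_Rf\subseteq I_1$.
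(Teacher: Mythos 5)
Your proof is correct and follows essentially the same strategy as the paper's: take a witness $f$ of $m_R\in\Ass(R/I_1)$, show $m_R^{N}f^{n}\subseteq I_1^{n}\subseteq I_{a,n}$ while $f^{n}\notin I_n\supseteq\overline{I_{a,n}}$ via the valuations, and conclude that a suitable colon ideal is $m_R$-primary. The only (harmless) differences are that you package the last step as a general criterion for $m_R\in\Ass(R/J)$ rather than explicitly locating an element $x$ with $(\overline{I_{a,n}}:y^nx)=m_R$, and you treat the case $\Ass(R/I_1)=\{m_R\}$ uniformly instead of separately.
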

\begin{proof}
	Suppose $\Ass(R/I_1)=\{\mR\}.$ Since $\Min\Ass(R/I_1)=\Min\Ass(R/I_{a,n})$ for all $a,n\geq 1$, we have $\mR\in\Ass(R/\overline{I_{a,n}})$ for all $a, n\geq 1$.
	
	Suppose the cardinality of $\Ass(R/I_1)$ is greater than one and $\mR\in\Ass(R/I_1)$. Without loss of generality let us assume that the centers $I(\nu_i)_1$ of $\nu_i$ on $R$ are $m_R$ for $1\le i\le c$ and the centers $I(\nu_i)_1$ are not $m_R$ for $i>c $.
	
	Fix $a.$ Since $\mR\in\Ass(R/I_{1})$, there exists $y\in R\setminus I_1$ such that $\mR y\in I_1$. Therefore $$y\in I_1: \mR^{\infty}=\bigcap_{ j> c}^rI(\nu_j)_{a_j}.$$ Thus $y\notin \cap_{i=1}^cI(\nu_i)_{a_i}.$ Hence $y^{n}\in \bigcap_{j>c}^rI(\nu_j)_{na_j}\setminus \cap_{i=1}^cI(\nu_i)_{na_i}$ for all $n\geq 1$. Therefore $y^n\notin I_n$ for all $n\geq 1$. Since $I_{a,n}\subset I_n= {\overline{I_n}}$, we have $y^n\notin {\overline{I_{a,n}}}$ for all $n\geq 1$.
	
	Let $v\in \mR^{b}$  where $b=a_1+\cdots+a_c$. Then $\nu_i(yv)\geq a_i$ for all $1\leq i\leq c$. Thus $yv\in \cap_{i=1}^cI(\nu_i)_{a_i}.$ Since $y\in \bigcap_{j>c}^rI(\nu_j)_{a_j}$, we have $yv\in I_1$ and hence for all $n\geq 1$, 
	$$
	y^{n}\mR^{nb}\subset I_1^{n}=I_{a,1}^{n}\subset I_{a,n}\subset{\overline {I_{a,n}}}.
	$$
	 Let $m\geq 1$ be an integer such that $y^n\mR^m\subseteq{\overline {I_{a,n}}}$ and $y^n\mR^{m-1}\nsubseteq{\overline {I_{a,n}}}$. Let $x\in \mR^{m-1}\setminus\mR^m$ such that $y^nx\notin {\overline {I_{a,n}}}.$	Then $\mR=({\overline {I_{a,n}}}:_R y^nx).$ Therefore
	$\mR\in\Ass(R/\overline{I_{a,n}})$ for all $n\geq 1$. 
\end{proof}

 \begin{Lemma}\label{valuationlemma2} Suppose that $R$ is a local domain and $\mathcal I=\{I_n\}$ is a divisorial filtration on $R$. Suppose that $P$ is a prime ideal of $R$ and there exists $t\in \ZZ_{>0}$ such that $P\in \mbox{Ass}(R/I_t)$. Then there exists $n_0\in \ZZ_{>0}$ such that $P\in \mbox{Ass}(R/I_n)$ for all $n\ge n_0$.
 \end{Lemma}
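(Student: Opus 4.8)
The plan is to localize at $P$ in order to reduce to the case $P=m_R$, and then to run an explicit argument modeled on the proof of Proposition~\ref{persistence}, this time keeping track of the range of $n$ for which it works. For the localization step, note that $P\in\operatorname{Ass}(R/I_t)$ is equivalent to $PR_P\in\operatorname{Ass}(R_P/(I_t)_P)$, and similarly for every $I_n$, so it suffices to work in $R_P$. After discarding any $\nu_i$ with $a_i=0$ (which contributes the unit ideal to every $I_n$), and using that localization commutes with finite intersections, one has $(I_n)_P=\bigcap_i\bigl(I(\nu_i)_{na_i}\bigr)_P$; for a $\nu_i$ whose center $\mathfrak p_i=m_{\nu_i}\cap R$ is not contained in $P$ one has $\sqrt{I(\nu_i)_{na_i}}=\mathfrak p_i\not\subseteq P$, so that factor localizes to $R_P$ and drops out, while for $\mathfrak p_i\subseteq P$ one checks $\bigl(I(\nu_i)_{na_i}\bigr)_P=m_{\nu_i}^{na_i}\cap R_P$, the valuation ideal of $\nu_i$ in $R_P$. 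Thus $\mathcal I_P$ is again a divisorial filtration of $R_P$, and I may assume $P=m_R$.

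Working at $m_R$, I would pick $y\in R$ with $y\notin I_t$ and $m_Ry\subseteq I_t$. If $y$ is a unit then $I_t=m_R$, hence $m_R^n\subseteq I_n\subseteq m_R$ and $\operatorname{Ass}(R/I_n)=\{m_R\}$ for all $n\ge1$; so I may assume $y\in m_R$. For each $\nu_i$ whose center $\mathfrak p_i$ is strictly smaller than $m_R$, choosing $g_i\in m_R\setminus\mathfrak p_i$ gives $\nu_i(g_i)=0$ and hence $\nu_i(y)=\nu_i(yg_i)\ge ta_i$; were every center strictly smaller than $m_R$, this would force $y\in\bigcap_iI(\nu_i)_{ta_i}=I_t$, a contradiction. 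So, after reordering, $\nu_1,\dots,\nu_c$ with $c\ge1$ are the $m_R$-valuations, the centers of $\nu_{c+1},\dots,\nu_r$ are strictly smaller than $m_R$, $y\in\bigcap_{j>c}I(\nu_j)_{ta_j}$, and (reordering within the first $c$) $d:=\nu_1(y)$ satisfies $1\le d<ta_1$. Put $b=a_1+\cdots+a_c$. Since $\nu_i(m_R)\ge1$ for $i\le c$, one checks $y\,m_R^{tb}\subseteq I_t$, and consequently $y^km_R^{ktb}\subseteq I_t^k\subseteq I_{tk}$ for every $k\ge1$.

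The main step is then as follows: given $n$, choose an integer $k$ with $n/t\le k<na_1/d$. Such a $k$ exists as soon as this interval has length at least $1$, i.e.\ once $n\ge n_0:=\lceil dt/(ta_1-d)\rceil$ (finite since $ta_1-d\ge1$). For such $n$, the inequality $tk\ge n$ gives $y^km_R^{ktb}\subseteq I_{tk}\subseteq I_n$, while $kd<na_1$ gives $y^k\notin I(\nu_1)_{na_1}$, hence $y^k\notin I_n$. Letting $M\ge1$ be minimal with $y^km_R^M\subseteq I_n$ and choosing $x\in m_R^{M-1}$ with $y^kx\notin I_n$, I obtain $m_R(y^kx)\subseteq I_n$ together with $y^kx\notin I_n$, so $(I_n:_Ry^kx)=m_R$ and $m_R\in\operatorname{Ass}(R/I_n)$. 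This proves the lemma, with $n_0$ as just displayed.

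The step I expect to require the most care --- and the reason the conclusion is merely asymptotic, in contrast with Proposition~\ref{persistence} --- will be the choice of the exponent $k=k(n)$: it has to be large enough that $I_{tk}\subseteq I_n$, so that the ``expanding'' containment $y^km_R^{ktb}\subseteq I_{tk}$ still lands inside $I_n$, yet small enough that $y^k$ has not yet been absorbed into $I(\nu_1)_{na_1}$, so that $y^k\notin I_n$. These two requirements confine $k$ to an interval of length growing like $n(ta_1-d)/(dt)$, which is nonempty for large $n$ precisely because $d<ta_1$; this is exactly where the threshold $n_0$ comes from. The remaining ingredients --- the behaviour of the valuation ideals under localization, and the two elementary containments $y\,m_R^{tb}\subseteq I_t$ and $y^km_R^{ktb}\subseteq I_{tk}$ --- should be routine valuation bookkeeping.
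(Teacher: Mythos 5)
Your proof is correct. The core mechanism is the same as the paper's: both arguments rest on the observation that a witness to $P\in\Ass(R/I_t)$ must have, for some valuation $\nu_1$ centered at $P$, a value $d=\nu_1(y)$ strictly less than $ta_1$, and then raise that witness to a power chosen so that its values at the valuations \emph{not} centered at $P$ keep pace with $I_n$ while its $\nu_1$-value falls behind $na_1$; the strict inequality $d<ta_1$ is precisely what makes these two requirements compatible for all large $n$, in your proof as in the paper's. The packaging differs, however. The paper works with $P$ directly: it takes $f\in\bigcap_{i>c}I(\nu_i)_{a_it}\setminus I_t$ together with an auxiliary $g\in I_1$, forms $f^mg^s$ where $n=mt+s$, and concludes via the irredundancy criterion for the primary decomposition $I_n=\bigcap_k Q(k)_n$ into components $Q(k)_n$ primary for the distinct centers. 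You instead first localize at $P$ — which requires, and you correctly supply, the fact that the valuation ideals $I(\nu_i)_{na_i}$ are $\mathfrak p_i$-primary and localize to either the unit ideal or the corresponding valuation ideal of $R_P$ — then take $y$ with $(I_t:y)=m_R$ and finish with the colon-ideal trick already used in the proof of Proposition~\ref{persistence}, producing an element $y^kx$ whose annihilator modulo $I_n$ is exactly $m_R$. Your route avoids any appeal to the primary decomposition of $I_n$ and yields an explicit threshold $n_0=\lceil dt/(ta_1-d)\rceil$, at the cost of the localization bookkeeping and the case analysis when $y$ becomes a unit in $R_P$ (which you handle). Both arguments are complete.
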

 
 \begin{proof} Let $I_n=I(\nu_1)_{a_1n}\cap\cdots\cap I(\nu_r)_{a_rn}$ for $n\in \NN$. By Lemma 3, page 343 of Zariski Samuel Vol. II, for all $m\in \ZZ_{>0}$, the ideal $I(\nu_i)_m$ is $P_i$-primary, where the prime ideal $P_i=I(\nu_i)_1$ is the center of $\nu_i$ on $R$. Let $P_1,\ldots,P_s$ be the distinct centers of the $\nu_i$ on $R$ for $1\le i\le r$. For $k$ with $1\le k\le s$ and $n\in \ZZ_{>0}$, let
 	$$
 	Q(k)_n=\bigcap\limits_{I(\nu_i)_1=P_k}I(\nu_i)_{a_in},
 	$$
 	which is a $P_k$-primary ideal.  
 	Thus for all  $1\le k\le s$, $P_k\in \mbox{Ass}(R/I_n)$ if and only if 
 	$I_n\ne \bigcap\limits_{\substack{1\leq i\leq s\\ i\ne k}}Q(i)_n$.
 	
 	Suppose that $P\in \mbox{Ass}(R/I_t)$. Then $P=P_k$ for some $k$. After reindexing the $\nu_i$, there exists  $c>0$ such that the centers $I(\nu_i)_1=P$ if $1\le i\le c$ and $I(\nu_i)_1\ne P$ if $c<i$.	
 	
 	Thus 
 	there exists $f\in \cap_{i>c}I(\nu_i)_{a_it}\setminus I_t$. Therefore $\nu_i(f)\ge a_it$ for $i>c$ and there exists $j$ with $1\le j\le c$ such that $\nu_j(f)\le a_jt-1$. Let $0\ne g\in I_1$ be arbitrary. Then $\nu(g)\ge a_i$ for all $i$. Let $\beta=\nu_j(g)\ge a_j$.
 	
 	Let $n\in \NN$. Write $n=mt+s$ with $m\in \NN$ and $0\le s<t$.
 	$\nu_i(f^mg^s)\ge na_i$ for $i>c$ and 
 	$$
 	\nu_j(f^mg^s)\le m(a_jt-1)+s\beta=(mt+s)a_j+s(\beta-a_j)-m
 	=na_j+s(\beta-a_j)-m<na_j
 	$$
 	for $m>s(\beta-a_j)$. Thus for $m>s(\beta-a_j)$, we have that $f^mg^s\in \cap_{i>c}I(\nu_i)_{a_in}\setminus I_n$ which implies that
 	$P\in \mbox{Ass}(R/I_n)$.
 \end{proof}

 Suppose that $R$ is  a local domain and $\mathcal I=\{I_n\}$ is a divisorial filtration of $R$ where
 $I_n=I(\nu_1)_{a_1n}\cap \cdots\cap I(\nu_r)_{a_rn}$.

 Let $S=S[\mathcal I]$.   Let $I_n=R$ for $n\le 0$. Then for $r\in \ZZ_{>0}$.
 \begin{equation}\label{eqtminus}
 t^{-r}S=\sum_{n\in \ZZ}I_{n+r}t^n.
 \end{equation}
 
\begin{Lemma}\label{LemmaAS1} Let $K$ be an ideal in $R$ such that $I_1\subset K.$ Suppose that $n\in\NN$. Then there exists $r\in \ZZ_{>0}$ such that 
	$(I_{n+1})^r\subset KI_{rn}$. In particular the ideal $\oplus_{n\ge 0}I_{n+1}t^n\subset \sqrt{KR[\mathcal I]}$.
\end{Lemma}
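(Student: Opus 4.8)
The plan is to prove the containment with the explicit exponent $r=n+1$; notably, no use of the divisorial structure of $\mathcal I$ is needed here, only the filtration axioms $I_aI_b\subseteq I_{a+b}$ together with the hypothesis $I_1\subseteq K$. First I would split the product ideal as
$$(I_{n+1})^{n+1}=I_{n+1}\cdot (I_{n+1})^{n}.$$
Since the filtration is decreasing, $I_{n+1}\subseteq I_1\subseteq K$, so the first factor lies in $K$. Iterating $I_aI_b\subseteq I_{a+b}$ gives $(I_{n+1})^{n}\subseteq I_{n(n+1)}$, and since $n(n+1)=(n+1)n$ this is exactly $I_{rn}$ with $r=n+1$. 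Therefore $(I_{n+1})^{r}=(I_{n+1})^{n+1}\subseteq K\,I_{rn}$, which is the asserted containment. (In the degenerate case $n=0$ one takes $r=1$, and the statement reduces to $I_1\subseteq K=KI_0$, which is the hypothesis.)

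For the ``in particular'' statement, I would use that $KR[\mathcal I]=\bigoplus_{m\ge 0}(KI_m)t^m$ and that every element of $\bigoplus_{n\ge 0}I_{n+1}t^n$ is a finite sum of elements of the form $ft^n$ with $f\in I_{n+1}$ (for various $n$). For such an $f$, the first part gives $f^{\,n+1}\in (I_{n+1})^{n+1}\subseteq KI_{(n+1)n}$, so $(ft^n)^{n+1}=f^{\,n+1}t^{(n+1)n}\in (KI_{(n+1)n})t^{(n+1)n}\subseteq KR[\mathcal I]$; hence $ft^n\in\sqrt{KR[\mathcal I]}$. Since $\sqrt{KR[\mathcal I]}$ is an ideal, any finite sum of such elements lies in it, and this yields $\bigoplus_{n\ge 0}I_{n+1}t^n\subseteq\sqrt{KR[\mathcal I]}$.

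I do not expect a genuine obstacle; the only care needed is the index bookkeeping $(I_{n+1})^n\subseteq I_{n(n+1)}=I_{rn}$ and the trivial boundary case $n=0$. (Should one instead prefer an elementwise argument — producing for each $f\in I_{n+1}$ some $r_f$ with $f^{r_f}\in KI_{r_fn}$ — one can combine finitely many such bounds over a generating set of $I_{n+1}$ by the usual binomial-expansion trick, again using only $I_aI_b\subseteq I_{a+b}$; but the direct argument above already delivers a single uniform $r$.)
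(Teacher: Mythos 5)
Your proof is correct and is essentially the paper's argument: the paper writes $(I_{n+1})^r=I_{n+1}\cdot(I_{n+1})^{r-1}\subset K(I_{n+1})^{r-1}\subset KI_{(n+1)(r-1)}\subset KI_{rn}$ for any $r\ge n+1$, which is exactly your computation specialized to $r=n+1$. Your explicit handling of the $n=0$ case and of the passage to $\sqrt{KR[\mathcal I]}$ is fine and only spells out what the paper leaves implicit.
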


\begin{proof} For all $r\in \ZZ_{>0}$, $I_{n+1}^r= I_{n+1} I_{n+1}^{r-1}\subset K I_{n+1}^{r-1}$. Note that $ I_{n+1}^{r-1}\subset  I_{(n+1)(r-1)}.$ Thus if $r\ge n+1$, then $I_{n+1}^{r-1}\subset I_{rn}$.
\end{proof}

\begin{Lemma}\label{Lemmagen} Let $R$ be a local domain and $\mathcal I=\{I_n\}$  be a divisorial filtration of ideals in $R$, where $I_n=I(\nu_1)_{a_1n}\cap\cdots\cap I(\nu_r)_{a_rn}$. 
For $1\le i\le r$, let 
$$
P_i=\sum_{n\in \ZZ}I(\nu_i)_{a_in+1}\cap I_nt^n.
$$
 Then $P_i$ is a prime ideal in $S=S[\mathcal I]$.
Let 
$$
Q_i=\sum_{n\in \ZZ}I(\nu_i)_{a_i(n+1)}\cap I_nt^n.
$$
 Then $Q_i$ is $P_i$-primary for $1\le i\le r$ and 
$$
t^{-1}S=Q_1\cap\cdots\cap Q_r.
$$
\end{Lemma}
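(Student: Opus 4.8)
The plan is to work throughout with the $\ZZ$-grading on $S=S[\mathcal I]$. Recall (from the discussion preceding (\ref{eqtminus})) that with the convention $I_n=R$ for $n\le 0$ one has $S=\sum_{n\in\ZZ}I_nt^n$ with degree-$n$ component exactly $I_n$; correspondingly I set $I(\nu_i)_k=R$ for $k\le 0$, so that $I_m=\bigcap_i I(\nu_i)_{a_im}$ holds for all $m\in\ZZ$. I will repeatedly use that each $\nu_i$ is a discrete, hence $\ZZ$-valued, valuation with $\nu_i(f)\ge a_in$ for every $n\in\ZZ$ and every $f\in I_n$ (immediate from $I_n\subseteq I(\nu_i)_{a_in}$, the convention taking care of $n\le 0$). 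First I would verify that $P_i$ and $Q_i$ are proper homogeneous ideals of $S$: stability under multiplication by a homogeneous element $gt^m\in S$, so $g\in I_m$, follows from $I_nI_m\subseteq I_{n+m}$ together with $\nu_i(fg)=\nu_i(f)+\nu_i(g)\ge\nu_i(f)+a_im$, which shifts the valuation condition by exactly the required amount; properness holds because the degree-$0$ components are the proper ideals $I(\nu_i)_1$, resp.\ $I(\nu_i)_{a_i}$, of $R$ (here one uses $a_i\ge 1$).

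For primality of $P_i$: since $P_i$ is homogeneous, it suffices to test homogeneous elements. If $ft^n\notin P_i$ then $f\in I_n$ gives $a_in\le\nu_i(f)$ while $f\notin I(\nu_i)_{a_in+1}$ gives $\nu_i(f)<a_in+1$, so $\nu_i(f)=a_in$ exactly; this is the one spot where discreteness of $\nu_i$ is essential. Hence if both $ft^n\notin P_i$ and $gt^m\notin P_i$, then $\nu_i(fg)=a_i(n+m)$, so $fg\notin I(\nu_i)_{a_i(n+m)+1}$ and therefore $(ft^n)(gt^m)=fgt^{n+m}\notin P_i$. Thus $P_i$ is prime.

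For $Q_i$ being $P_i$-primary: since $a_i(n+1)\ge a_in+1$ we have $Q_i\subseteq P_i$, hence $\sqrt{Q_i}\subseteq\sqrt{P_i}=P_i$. Conversely, for homogeneous $ft^n\in P_i$ one has $\nu_i(f)\ge a_in+1$, so $\nu_i(f^{a_i})\ge a_i(a_in+1)$ and $f^{a_i}\in I_n^{a_i}\subseteq I_{a_in}$, giving $(ft^n)^{a_i}=f^{a_i}t^{a_in}\in Q_i$; as $P_i$ is generated by its homogeneous elements this shows $\sqrt{Q_i}=P_i$. To see $Q_i$ is primary I again test homogeneous elements: if $fgt^{n+m}\in Q_i$ but $ft^n\notin Q_i$, then $\nu_i(f)+\nu_i(g)\ge a_i(n+m+1)$ while $\nu_i(f)\le a_i(n+1)-1$, forcing $\nu_i(g)\ge a_im+1$, i.e.\ $gt^m\in P_i=\sqrt{Q_i}$. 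Here I invoke the standard fact that primality and primariness of a homogeneous ideal in a graded ring may be checked on homogeneous elements.

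Finally, $t^{-1}S=Q_1\cap\cdots\cap Q_r$ is an equality of homogeneous ideals, so it can be checked in each degree. By (\ref{eqtminus}) the degree-$n$ component of $t^{-1}S$ is $I_{n+1}$, while the degree-$n$ component of $\bigcap_i Q_i$ is $\bigcap_i\big(I(\nu_i)_{a_i(n+1)}\cap I_n\big)=I_n\cap\bigcap_i I(\nu_i)_{a_i(n+1)}=I_n\cap I_{n+1}=I_{n+1}$, using $I_{n+1}=\bigcap_i I(\nu_i)_{a_i(n+1)}$ and $I_{n+1}\subseteq I_n$. The argument contains no deep step; the points that need care are the consistent bookkeeping of the degrees $n\le 0$ through the conventions $I_n=R$ and $I(\nu_i)_k=R$ for $k\le 0$, the use of discreteness of divisorial valuations to pin down $\nu_i(f)$ exactly in the primality argument, and making sure the primary (not merely the radical) condition for $Q_i$ really propagates the valuation inequality.
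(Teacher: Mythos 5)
Your proof is correct and follows essentially the same route as the paper's: reduce to homogeneous elements, use the additivity and discreteness of the $\nu_i$ to verify the ideal/prime/primary conditions, and compare the graded pieces $I_{n+1}$ degree by degree for the intersection formula. The only cosmetic differences are that you phrase primality contrapositively via the exact value $\nu_i(f)=a_in$, and you derive the primary condition by landing in $P_i=\sqrt{Q_i}$ rather than exhibiting an explicit power of $g$ in $Q_i$ as the paper does.
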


\begin{proof}
Observe that if  $n\le 0$, then the valuation ideal $I(\nu_i)_n=R$, and so $I_n=R$ for $n\le 0$.  Since $P_i$ is a graded $R$-module, to show that it is an ideal in $S$, it suffices to show that if $f\in I(\nu_i)_{a_ia+1}\cap I_a$ and $g\in  I_b$ then 
$fg\in I(\nu_i)_{a_i(a+b)+1}\cap I_{a+b}$. This follows since $\nu_i(fg)=\nu_i(f)+\nu_i(g)\ge (a_ia+1)+a_ib\ge a_i(a+b)+1$
so $fg\in I(\nu_i)_{a_i(a+b)+1}$. $P_i\ne R$ since $R\cap P_i=I(\nu_i)_1$ (which is  a prime ideal). Since $P_i$ is graded, to show that $P_i$ is a prime ideal, it suffices to show that if $f\in I_a$ and $g\in I_b$ are such that 
$fg\in I(\nu_i)_{a_i(a+b)+1}\cap I_{a+b}$, then either $f\in I(\nu_i)_{a_ia+1}\cap I_a$ or $g\in I(\nu)_{a_ib+1}\cap I_b$. 
This follows since    $\nu_i(f)\ge a_ia$, $\nu_i(g)\ge a_ib$ and $\nu_i(fg)=\nu_i(f)+\nu_i(g)\ge a_i(a+b)+1$ so either $\nu_i(f)\ge a_ia+1$ or $\nu_i(g)\ge a_ib+1$.

We now show that $Q_i$ is a primary ideal. It suffices to show that if $f\in I_a$, $g\in I_b$, $fg\in I(\nu_i)_{a_i(a+b+1)}\cap I_{a+b}$ and $f\not\in I(\nu_i)_{a_i(a+1)}\cap I_a$, then there exists an $m>0$ such that $g^m\in I(\nu_i)_{a_i(mb+1)}\cap I_{mb}$.
With these assumptions we have that $\nu_i(f)<a_i(a+1)$ and $\nu_i(fg)\ge a_i(a+b+1)$ so that $\nu_i(g)>a_ib$,  and thus  $\nu_i(g)=a_ib+c$ for some $c>0$. There exists $m>0$ such that $mc\ge a_i$. Thus $\nu_i(g^m)=ma_ib+mc\ge a_i(mb+1)$ so that $g^m\in I(\nu_i)_{a_i(mb+1)}\cap I_{mb}$.

We now show that $\sqrt{Q_i}=P_i$. $Q_i\subset P_i$ since $a_i(n+1)\ge a_in+1$ for all $i$ and $n\ge 0$. We then have that 
$\sqrt{Q_i}=P_i$ since $f\in I(\nu_i)_{a_in+1}\cap I_n$ implies $f^m\in I(\nu_i)_{a_i(mn+1)}\cap I_{mn}$ for $m\ge a_i$.

By (\ref{eqtminus}), $t^{-1}S=\sum_{n\in \ZZ}I_{n+1}t^n=Q_1\cap\cdots\cap Q_r$.
 \end{proof}

 \begin{Theorem}\label{PropAS7} Suppose that $R$ is a  local domain and $\mathcal I=\{I_n\}$ is a divisorial filtration on $R$. Let $I_n=I(\nu_1)_{a_1n}\cap\cdots\cap I(\nu_r)_{a_rn}$ for $n\ge 1$, some valuations $\nu_i$ and some $a_1,\ldots,a_r\in \ZZ_{>0}$. Suppose that $\ell(\mathcal I)=\dim R$. Then for some $\nu_i$, the center $m_{\nu_i}\cap R=\{f\in R\mid \nu_i(f)>0\}$   is $m_R$. There exists a positive integer $n_0$ such that $m_R$ is an associated prime of $I_n=\overline{I_n}$ for all $n\geq n_0$.
	\end{Theorem}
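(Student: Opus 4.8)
Here is how I would approach this (which may not match the authors' route). I treat the two assertions separately; the heart is the first, and for it I pass to the extended Rees algebra $S[\mathcal I]=R[\mathcal I][t^{-1}]$ and use Lemma \ref{Lemmagen}.

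\textbf{Step 1 (a formula for the analytic spread).} By Lemma \ref{LemmaAS1} applied with $K=m_R$ (note $I_1\subseteq m_R$), the ideal $\mathcal N:=\bigoplus_{n\ge0}I_{n+1}t^n$ of $R[\mathcal I]$ lies in $\sqrt{m_RR[\mathcal I]}$, so $\sqrt{m_RR[\mathcal I]}=\sqrt{m_RR[\mathcal I]+\mathcal N}$ and hence $\ell(\mathcal I)=\dim R[\mathcal I]/m_RR[\mathcal I]=\dim R[\mathcal I]/(m_RR[\mathcal I]+\mathcal N)$. Since $R[\mathcal I]/\mathcal N=\bigoplus_{n}I_n/I_{n+1}=:\operatorname{gr}_{\mathcal I}(R)$, this gives $\ell(\mathcal I)=\dim\operatorname{gr}_{\mathcal I}(R)/m_R\operatorname{gr}_{\mathcal I}(R)$. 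On the other hand $t^{-1}S[\mathcal I]=\bigoplus_n I_{n+1}t^n$, so $S[\mathcal I]/t^{-1}S[\mathcal I]=\operatorname{gr}_{\mathcal I}(R)$, and therefore
$$\ell(\mathcal I)=\dim S[\mathcal I]/(m_R,t^{-1})S[\mathcal I].$$

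\textbf{Step 2 (producing a valuation with center $m_R$).} Put $d=\dim R$ and assume $\ell(\mathcal I)=d$. We may assume that not every $\nu_i$ has center $m_R$, since otherwise each $I_n$ is $m_R$-primary and both conclusions hold at once; then $\mathcal I$ is nontrivial, $d\ge1$, $I_1\ne0$, and $\dim S[\mathcal I]=d+1$ by Lemma \ref{LemmaAS2}. Choose a prime $\mathfrak P\supseteq(m_R,t^{-1})S[\mathcal I]$ with $\dim S[\mathcal I]/\mathfrak P=d$. By Lemma \ref{Lemmagen}, $t^{-1}S[\mathcal I]=Q_1\cap\cdots\cap Q_r$, so $\mathfrak P$ contains some $Q_i$ and hence $P_i:=\sqrt{Q_i}$. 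Since $t^{-1}\in Q_i\subseteq P_i$ and $S[\mathcal I]$ is a domain, $P_i\ne0$, so $\dim S[\mathcal I]/P_i\le\dim S[\mathcal I]-1=d$; combined with $\dim S[\mathcal I]/P_i\ge\dim S[\mathcal I]/\mathfrak P=d$ we get equality. A strict inclusion $P_i\subsetneq\mathfrak P$ would force $\dim S[\mathcal I]/P_i\ge d+1$, hence $\operatorname{ht}P_i=0$ and $P_i=0$, contradicting $t^{-1}\in P_i$; thus $P_i=\mathfrak P\supseteq m_RS[\mathcal I]$. Taking degree-zero parts, the center $m_{\nu_i}\cap R=P_i\cap R$ of $\nu_i$ contains $m_R$, so it equals $m_R$, proving the first assertion.

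\textbf{Step 3 (associated primes).} Suppose now that $m_R\notin\operatorname{Ass}(R/I_n)$ for every $n$, and let $\mathcal J=\{J_n\}$ with $J_n=\bigcap_{\nu_i\ \text{of center}\ \ne m_R}I(\nu_i)_{na_i}$. Then for each $n$ the $m_R$-primary part of the given decomposition of $I_n$ is redundant, so $I_n=J_n$; but $\mathcal J$ is a nontrivial divisorial filtration (its ideals are proper and nonzero) none of whose displayed valuations has center $m_R$, so Step 2 applied to $\mathcal J$ gives $\ell(\mathcal J)<d$, contradicting $\ell(\mathcal I)=\ell(\mathcal J)=d$. Hence $m_R\in\operatorname{Ass}(R/I_n)$ for some $n$; concretely, choosing $n$ with $I_n\subsetneq J_n$, writing $I_n=J_n\cap W_n$ with $W_n$ the ($m_R$-primary) intersection of the remaining valuation ideals, and clearing a suitable power of $m_R$ exactly as in the proof of Proposition \ref{persistence}, one obtains $z\in J_n\setminus W_n$ with $(I_n:z)=m_R$. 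Finally Lemma \ref{valuationlemma2} with $P=m_R$ upgrades this to $m_R\in\operatorname{Ass}(R/I_n)=\operatorname{Ass}(R/\overline{I_n})$ for all $n\ge n_0$.

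The main obstacle is Step 2: converting the purely dimension-theoretic hypothesis $\ell(\mathcal I)=\dim R$ into an actual valuation center. The two facts that make it bite are the identity $\ell(\mathcal I)=\dim S[\mathcal I]/(m_R,t^{-1})S[\mathcal I]$ from Step 1 and $\dim S[\mathcal I]=\dim R+1$; together with the primary decomposition of $t^{-1}S[\mathcal I]$ in Lemma \ref{Lemmagen} they pin the top-dimensional component $\mathfrak P$ down to a single $P_i$, which is then forced to absorb $m_RS[\mathcal I]$. (I have deliberately avoided localizing $R[\mathcal I]$, since that ring and $S[\mathcal I]$ may be non-Noetherian and Krull's principal ideal theorem is unavailable; the chain-counting used above is valid in any ring.) Everything in Step 3 is routine given Proposition \ref{persistence} and Lemma \ref{valuationlemma2}.
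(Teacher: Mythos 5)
Your proposal is correct and follows essentially the same route as the paper: both pass to $S[\mathcal I]$, use Lemma \ref{LemmaAS1} to identify $\ell(\mathcal I)$ with $\dim S[\mathcal I]/(m_R,t^{-1})S[\mathcal I]$, use the decomposition $t^{-1}S[\mathcal I]=Q_1\cap\cdots\cap Q_r$ of Lemma \ref{Lemmagen} together with $\dim S[\mathcal I]=\dim R+1$ to force some height-one $P_i$ to contain $m_RS[\mathcal I]$, and then finish the associated-prime claim by the same contradiction argument and Lemma \ref{valuationlemma2}. Your chain-counting in Step 2 is just a rephrasing of the paper's inequality $1+\dim R\le\dim S/U+\operatorname{ht}(U)\le\dim S\le\dim R+1$.
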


 \begin{proof} Let $S=S[\mathcal I]$ and let notation be as in Lemma \ref{Lemmagen}.
  Let $J$ be the graded ideal 
 \begin{equation}\label{eqt1}
 J=\sum_{n\ge 0}I_{n+1}t^n\subset R[\mathcal I].
 \end{equation}  
  By assumption, there exists a prime ideal $\overline U$ of $T_{\mathcal I}$ such that 
$\dim T_{\mathcal I}/\overline U=\dim R$.
 We have isomorphisms of graded $R$-algebras  
 $$
 A:=S/(t^{-1}S+m_RS)\cong \sum_{n\ge 0} I_{n}/(I_{n+1}+m_RI_{n})t^n\cong R[\mathcal I]/(J+m_RR[\mathcal I]).
 $$
 By Lemma \ref{LemmaAS1}, the nilradical of  $R[\mathcal I]/(J+m_RR[\mathcal I])$ is $\sqrt{m_RR[\mathcal I]}/(J+m_RR[\mathcal I])$. Thus
  the quotient of $T_{\mathcal I}$ by its nilradical is isomorphic as a graded $R$-algebra to the quotient of $A$
 by its nilradical, and so there exists a prime ideal $U'$ of $A$ such that $\dim A/U'=\dim R$.  Let $U$ be the preimage of $U'$ in $S$. We have that $t^{-1}S+m_RS\subset U$  and $t^{-1}\ne 0$ in the domain $S$ so that $\mbox{ht}(U)\ge 1$. Since $\dim S/U=\dim R$, we have that
  $$
  1+\dim R\le \dim S/U+\mbox{ht}(U)\le \dim S\le \dim R+1
  $$
  by Lemma \ref{LemmaAS2} so $\dim S=\dim R+1$ and $\mbox{ht}(U)=1$.
  We further have that $U\cap R=m_R$, since $m_RS\subset U$. Now $\sqrt{t^{-1}S}=\cap_{i=1}^r P_i\subset U$ so that $P_i\subset U$ for some $i$. Thus $P_i=U$ since $\mbox{ht}(U)=1$, and so $m_R=P_i\cap R=I(\nu_i)_1=m_{\nu_i}\cap R$ is the center of $\nu_i$ on $R$. 
  
  We will now show that $m_R$ is an associated prime of some $I_n$. Suppose that $m_R$ is not an associated prime of any $I_n$. We will derive a contradiction. After reindexing, we may suppose that, for some $s$, $\nu_i$ is an  $m_R$-valuation for $i\le s$  and $\nu_i$ is not an $m_R$-valuation for $i>s$.  Since $m_R$ is not an associated prime of $I_n$ for all $n$, we thus have that $I_n=I(\nu_{s+1})_{a_{s+1}n}\cap \cdots \cap I(\nu_{r})_{a_rn}$ for all $n$. Since none of  $\nu_{s+1},\ldots,\nu_r$ is an $m_R$-valuation, we have that $\ell(\mathcal I)<\dim R$ by the first part of this proof, a contradiction.
  Thus there is some positive integer $n_0$ such that $m_R$ is an associated prime of $I_{n_0}$. Thus $m_R$ is an associated prime of $I_{n}$ for all $n\gg 0$ by Lemma \ref{valuationlemma2}.
    
  \end{proof}
  
  \begin{Remark} Theorem \ref{PropAS7} shows that if $\ell(\mathcal I)=\dim R$ then one of the prime ideals $P_i$ of  Lemma \ref{Lemmagen} is a height one prime ideal in $S[\mathcal I]$ such that $P_i\cap R=m_R$.
  \end{Remark}
  
  \begin{Remark} The proofs of Lemma \ref{Lemmagen} and \ref{PropAS7} prove the following more general statement. Let $R$ be a  local ring and $\mathcal J(i)=\{J(i)_n\}_{n\in\NN}$ be filtrations of ideals in $R$ with $J(i)_1\subsetneq R$ for all $1\le i\le r.$ Suppose $\bigcap_{n\ge 1}J(i)_n=0$ and $\mathcal G_i=\bigoplus_{n\ge 0}J(i)_n/J(i)_{n+1}$ are domains for all $1\le i\le r.$ 
		
		Consider the filtration $\mathcal J=\{J_n=J(1)_{a_1n}\cap\cdots\cap J(r)_{a_rn}\}$ for some fixed $a_1,\ldots,a_r\in \ZZ_{>0}$. For $1\le i\le r$, let $P_i=\sum_{n\in \ZZ}J(i)_{a_in+1}\cap J_nt^n$. Then $P_i$ is a prime ideal in $S[\mathcal J]$.
		Let $Q_i=\sum_{n\in \ZZ}J(i)_{a_i(n+1)}\cap J_nt^n$. Then $Q_i$ is $P_i$-primary for $1\le i\le r$, and 
		$$
		t^{-1}S[\mathcal J]=Q_1\cap\cdots\cap Q_r.
		$$
		Suppose that $\ell(\mathcal J)=\dim R$. Then there exists a prime ideal $P_i=\sum_{n\in \ZZ}J(i)_{a_in+1}\cap J_nt^n$ in $S[\mathcal J]$ for some $i\in\{1,\ldots,r\}$ such that $\height P_i=1$ and $P_i\cap R=\mR.$		\end{Remark}

\begin{Corollary}\label{CorN31} Suppose that $R$ is a local domain and $\mathcal I=\{I_n\}$ is an $s$-divisorial filtration on $R$. Then $\ell(\mathcal I_Q)<\dim(R_Q)$ for all prime ideals $Q$ of $R$ which are not minimal primes of $I_1$.
\end{Corollary}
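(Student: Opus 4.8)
The plan is to reduce to Theorem \ref{PropAS7} by localizing the divisorial filtration at $Q$. Write $I_n=\bigcap_{i=1}^rI(\nu_i)_{a_in}$ with the $\nu_i$ being $s$-valuations; after discarding the indices with $a_i=0$ I may assume every $a_i\in\ZZ_{>0}$, and I set $p_i=I(\nu_i)_1=m_{\nu_i}\cap R$, the center of $\nu_i$ on $R$, so that $\dim R/p_i=s$ for every $i$.

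First I would pin down the minimal primes of $I_1$. Since each $I(\nu_i)_{a_i}$ is $p_i$-primary, $\sqrt{I_1}=p_1\cap\cdots\cap p_r$, so the minimal primes of $I_1$ are the minimal members of $\{p_1,\ldots,p_r\}$. But a nonzero prime of the domain $R/p_i$ has height at least $1$, so a strict containment $p_i\subsetneq p_j$ would force $\dim R/p_j<\dim R/p_i$, contradicting $\dim R/p_i=\dim R/p_j=s$. Hence the $p_i$ are pairwise incomparable, and the minimal primes of $I_1$ are exactly the distinct $p_i$. In particular the hypothesis on $Q$ says $Q\ne p_i$ for all $i$.

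Next I would localize. I may assume $I_1\subseteq Q$, which is the essential case (if $I_1\not\subseteq Q$ then $(I_n)_Q=R_Q$ for all $n$). Reindex so that $p_i\subseteq Q$ for $i\le c$ and $p_i\not\subseteq Q$ for $i>c$; then $c\ge 1$ because $\sqrt{I_1}\subseteq Q$. The key observations are: $R_Q\subseteq\mathcal O_{\nu_i}$ holds precisely when $\nu_i$ vanishes on $R\setminus Q$, i.e. precisely when $p_i\subseteq Q$; and for such $i$ the $p_i$-primary ideal $I(\nu_i)_m$ extends to $R_Q$ as $\{f\in R_Q\mid\nu_i(f)\ge m\}$, whereas for $i>c$ it extends to $R_Q$. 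Since localization commutes with finite intersections, $(I_n)_Q=\bigcap_{i=1}^c\{f\in R_Q\mid\nu_i(f)\ge a_in\}$, and because $\kappa(p_iR_Q)=\kappa(p_i)$ and $\height(p_iR_Q)=\height(p_i)$, the valuations $\nu_1,\ldots,\nu_c$ remain divisorial on $R_Q$, with centers $p_1R_Q,\ldots,p_cR_Q$. Thus $\mathcal I_Q$ is a divisorial filtration on the local domain $R_Q$.

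Finally, by Lemma \ref{LemmaN4}, $\ell(\mathcal I_Q)\le\dim R_Q$, so it suffices to exclude equality. If $\ell(\mathcal I_Q)=\dim R_Q$, then Theorem \ref{PropAS7} applied to $\mathcal I_Q$ on $R_Q$ yields an index $i$ with $1\le i\le c$ whose center on $R_Q$ is $m_{R_Q}$; since that center is $p_iR_Q$, we get $p_iR_Q=QR_Q$, hence $p_i=Q$ upon contracting to $R$, contradicting $Q\ne p_i$. This proves $\ell(\mathcal I_Q)<\dim R_Q$. The main obstacle is the localization step: one must check cleanly that the valuations with center not contained in $Q$ disappear (their valuation ideals become the unit ideal in $R_Q$) and that for the surviving valuations $I(\nu_i)_m$ localizes to the expected valuation ideal $\{f\in R_Q\mid\nu_i(f)\ge m\}$ of $R_Q$ — both consequences of $p_i$-primariness — after which Theorem \ref{PropAS7} supplies the conclusion at once.
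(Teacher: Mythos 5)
Your proof is correct and is exactly the argument the paper intends: Corollary \ref{CorN31} is stated without proof as an immediate consequence of Theorem \ref{PropAS7}, the content being precisely your localization step (the valuations whose centers are not contained in $Q$ drop out because their valuation ideals are $\mfp_i$-primary, the surviving ones remain divisorial on $R_Q$ with centers $\mfp_iR_Q$, and equality $\ell(\mathcal I_Q)=\dim R_Q$ would force some $\mfp_iR_Q=QR_Q$, contradicting the pairwise incomparability of the $\mfp_i$ coming from $\dim R/\mfp_i=s$ for all $i$). One pedantic remark: in the case $I_1\not\subseteq Q$ the localized filtration is the trivial one, for which the paper's definition gives $\ell(\mathcal I_Q)=\dim \kappa(Q)[t]=1$, so the asserted strict inequality can fail for $Q$ of height $\le 1$ not containing $I_1$; this is a defect of the corollary's literal wording (compare Proposition \ref{PropN22}, which adds the hypothesis $Q\supseteq I$) rather than of your argument, which is complete for the intended case $Q\in V(I_1)$.
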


  \begin{Corollary}\label{CorAS4} Suppose that $R$ is a local domain and $\mathcal I$ is an $s$-divisorial filtration on $R$ with $s\ge 1$. Then $\ell(\mathcal I)<\dim R$.
  \end{Corollary}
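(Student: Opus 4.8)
The plan is to deduce Corollary~\ref{CorAS4} immediately from Theorem~\ref{PropAS7} --- equivalently, from Corollary~\ref{CorN31} applied at the maximal ideal --- by a short contradiction argument. The substantive work has already been done; what remains is to observe that an $s$-divisorial filtration with $s\ge 1$ cannot involve any $m_R$-valuation.

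First I would record the elementary point at the heart of the argument. Write $I_n=I(\nu_1)_{a_1n}\cap\cdots\cap I(\nu_r)_{a_rn}$ with the $\nu_i$ being $s$-valuations, so that each center $p_i=m_{\nu_i}\cap R$ satisfies $\dim R/p_i=s\ge 1$ by definition of an $s$-valuation. In particular $p_i\ne m_R$ for every $i$, i.e.\ none of the $\nu_i$ is an $m_R$-valuation. (One should also note that $\mathcal I$ being genuinely $s$-divisorial forces at least one $a_i>0$, so that $I_1$ is a proper ideal whose minimal primes lie among the $p_i$; this rules out the trivial filtration, for which $s(\mathcal I)=-1\ne s$.)

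Now suppose, for contradiction, that $\ell(\mathcal I)=\dim R$. Then Theorem~\ref{PropAS7} applies and produces an index $i$ for which the center $m_{\nu_i}\cap R=\{f\in R\mid \nu_i(f)>0\}$ equals $m_R$; that is, $\nu_i$ is an $m_R$-valuation, contradicting the observation above. Hence $\ell(\mathcal I)<\dim R$. Packaged differently: since $s\ge 1$, every minimal prime of $I_1$ has dimension $s\ge 1$ and is therefore properly contained in $m_R$, so $m_R$ is a prime of $R$ that is not a minimal prime of $I_1$; applying Corollary~\ref{CorN31} with $Q=m_R$ and using the harmless identifications $\mathcal I_{m_R}=\mathcal I$ and $R_{m_R}=R$ gives $\ell(\mathcal I)<\dim R$ directly.

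I do not expect a real obstacle here: the content is entirely contained in Theorem~\ref{PropAS7} and in the structure of divisorial filtrations set up in Lemma~\ref{Lemmagen}. The only points deserving a moment's care are the degenerate-case bookkeeping (ensuring the defining family of valuations is nonempty and $I_1\subsetneq R$) and the trivial identification $\mathcal I_{m_R}=\mathcal I$ when invoking Corollary~\ref{CorN31}.
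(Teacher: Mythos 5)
Your proposal is correct and is exactly the intended derivation: the paper leaves Corollary \ref{CorAS4} without a written proof precisely because it follows from Theorem \ref{PropAS7} (or Corollary \ref{CorN31} at $Q=m_R$) together with the observation that every $s$-valuation with $s\ge 1$ has center strictly contained in $m_R$. Your care about the degenerate trivial-filtration case and the identification $\mathcal I_{m_R}=\mathcal I$ is appropriate but raises no issue.
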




\begin{Corollary}{\label{max}}
	Let $R$ be a local domain and $\mathcal I=\{I_n\}$ be a divisorial filtration in $R$ where $I_n=\bigcap_{i=1}^rI(\nu_i)_{na_i}$ for all $n\geq 1.$ Suppose $\mR\in\Ass(R/I_1)$. Then $\ell(\mathcal I_a)=\dim R$ for all $a$-th truncated filtration of $\mathcal I$ and hence $\ell(\mathcal I)\leq \ell(\mathcal I_a)$ for all $a\geq 1.$
\end{Corollary}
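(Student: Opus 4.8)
The plan is to reduce $\ell(\mathcal I_a)$ to the analytic spread of a single ideal and then quote McAdam's theorem (Theorem \ref{ThmN10}).

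First I would record the consequence of the hypothesis: since $m_R\in\Ass(R/I_1)$, Proposition \ref{persistence} gives $m_R\in\Ass(R/\overline{I_{a,n}})$ for every $a,n\geq 1$. Now fix $a$. The truncated filtration $\mathcal I_a$ is Noetherian, so Proposition \ref{PropAS11} supplies $e>0$ with $\ell(I_{a,e})=\ell(\mathcal I_a)$; choosing $e$ as in Remark \ref{RemAS1} we may moreover assume $R[\mathcal I_a]$ is module-finite over the Veronese-type subalgebra $R[I_{a,e}t^e]$. Put $J=I_{a,e}$. The filtration inequality $I_{a,i}I_{a,j}\subseteq I_{a,i+j}$ gives $J^k\subseteq I_{a,ke}$, while module-finiteness of $R[\mathcal I_a]$ over $R[Jt^e]$ forces each $I_{a,ke}$ to be integral over $J^k$ (integral closures of a Rees algebra are computed degreewise), so $I_{a,ke}\subseteq\overline{J^k}$. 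Hence $\overline{J^k}=\overline{I_{a,ke}}$ for all $k\geq 1$, and therefore $m_R\in\Ass(R/\overline{J^k})$ for some (in fact every) $k$ by the first step.

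By Theorem \ref{ThmN10} this yields $\ell(J)=\dim R$, whence $\ell(\mathcal I_a)=\ell(J)=\dim R$. For the last assertion, $\ell(\mathcal I)\leq\dim R=\ell(\mathcal I_a)$ is immediate from Lemma \ref{LemmaN4} (or Lemma \ref{LemmaAS2}), so $\ell(\mathcal I)\leq\ell(\mathcal I_a)$ for all $a\geq 1$.

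The step I expect to be the main obstacle is the identity $\overline{I_{a,ke}}=\overline{J^k}$: one must extract from Remark \ref{RemAS1} not merely the dimension equality but the actual module-finiteness of $R[\mathcal I_a]$ over $R[Jt^e]$, and combine it with the degreewise description of the integral closure of a Rees algebra. A secondary caveat is that Theorem \ref{ThmN10} is stated for formally equidimensional $R$; for the corollary as phrased one either restricts to that case—which already includes the intended applications, e.g. $R$ regular—or verifies that the divisorial hypothesis keeps the reduction harmless.
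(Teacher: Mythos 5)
Your proof follows essentially the same route as the paper's: Proposition \ref{persistence} to get $m_R\in\Ass(R/\overline{I_{a,n}})$, Proposition \ref{PropAS11} (via Remark \ref{RemAS1}) to reduce $\ell(\mathcal I_a)$ to $\ell(I_{a,e})$, and Theorem \ref{ThmN10} to conclude; your identification $\overline{I_{a,ke}}=\overline{I_{a,e}^{\,k}}$ correctly supplies the bridge that the paper leaves implicit when it invokes McAdam's theorem for the single ideal $I_{a,e}$. The formal-equidimensionality caveat you raise is genuine but applies equally to the paper's own proof, which also cites Theorem \ref{ThmN10} under only the ``local domain'' hypothesis.
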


\begin{proof}
	By Proposition \ref{persistence}, $\mR\in\Ass(R/\overline{I_{a,n}})$ for all $n, a\geq 1.$ Since $\mathcal I_a$ is a Noetherian filtration, there exists an integer $m$ such that $\ell(\mathcal I_a)=\ell(I_{a,m}).$ Therefore by \cite{Mc} ,\cite[Theorem 5.4.6]{HS},
	Theorem \ref{ThmN10}, we have $\ell(\mathcal I_a)=\dim R.$
\end{proof}

\begin{Corollary}\label{SmSpread}
	Let $R$ be a local domain and $\mathcal I=\{I_n\}$ be a divisorial filtration in $R$ where $I_n=\bigcap_{i=1}^rI(\nu_i)_{na_i}$ for all $n\geq 1.$ Suppose $I_1=\cap_{i=1}^rI(\nu_i)_{a_i}$ is a minimal primary decomposition of $I_1$ and $\ell(\mathcal I_a)<\dim R$ for some $a\geq 1$ where $\mathcal I_a$ is the $a$-th truncated filtration of $\mathcal I.$ Then $\ell(\mathcal I)<\dim R.$
\end{Corollary}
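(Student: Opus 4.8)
The plan is to argue by contradiction. Since $\ell(\mathcal I)\le\dim R$ always holds by Lemma \ref{LemmaN4}, it suffices to show that the hypothesis ``$\ell(\mathcal I_a)<\dim R$ for some $a$'' is incompatible with $\ell(\mathcal I)=\dim R$. So I would start by assuming $\ell(\mathcal I)=\dim R$ and work toward forcing $\ell(\mathcal I_a)=\dim R$ for \emph{every} $a\ge 1$, which contradicts the standing hypothesis.

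First I would feed the assumption $\ell(\mathcal I)=\dim R$ into Theorem \ref{PropAS7}, which produces an index $i$ such that the center $m_{\nu_i}\cap R=\{f\in R\mid \nu_i(f)>0\}$ of $\nu_i$ on $R$ equals $m_R$; equivalently $I(\nu_i)_1=m_R$. Next I would identify the associated primes of $R/I_1$: by Lemma 3, p.\ 343 of Zariski--Samuel, Vol.\ II, each $I(\nu_j)_{a_j}$ is primary for the prime $P_j=I(\nu_j)_1$, and since by hypothesis $I_1=\bigcap_{j=1}^r I(\nu_j)_{a_j}$ is a \emph{minimal} primary decomposition, the $P_j$ are pairwise distinct and $\Ass(R/I_1)=\{P_1,\dots,P_r\}$. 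Combining this with the output of Theorem \ref{PropAS7} gives $m_R=P_i\in\Ass(R/I_1)$. Finally, Corollary \ref{max} then applies verbatim and yields $\ell(\mathcal I_a)=\dim R$ for all $a\ge 1$, contradicting the assumption that some truncation has analytic spread strictly less than $\dim R$. Hence $\ell(\mathcal I)<\dim R$.

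The one step requiring a little care --- and the place where the minimality hypothesis genuinely enters --- is the passage from ``$\nu_i$ has center $m_R$ in the chosen representation $I_n=\bigcap_j I(\nu_j)_{a_jn}$'' to ``$m_R\in\Ass(R/I_1)$''. Without minimality, an $m_R$-valuation $\nu_i$ could contribute a redundant component $I(\nu_i)_{a_i}$ (this is exactly the behaviour in Example \ref{ExampleN6}), so that $m_R$ need not be an associated prime of $I_1$ and Corollary \ref{max} could not be invoked; the minimal primary decomposition assumption is precisely what prevents this. Everything else is a direct application of Theorem \ref{PropAS7} and Corollary \ref{max}, so no separate estimates are needed.
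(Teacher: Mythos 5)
Your proposal is correct and is essentially identical to the paper's proof: contradiction via Theorem \ref{PropAS7} to produce an $m_R$-valuation $\nu_i$, then $m_R\in\Ass(R/I_1)$ from the minimal primary decomposition, then Corollary \ref{max} to force $\ell(\mathcal I_a)=\dim R$ for all $a$. Your added remark pinpointing exactly where minimality is needed (to rule out a redundant $m_R$-primary component, as in Example \ref{ExampleN6}) is a correct and useful elaboration of a step the paper leaves implicit.
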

\begin{proof}
	If $\ell(\mathcal I)=\dim R$ then by Theorem \ref{PropAS7}, $m_{\nu_i}\cap R=\mR$ for some $i.$ Thus $\mR\in\Ass(R/I_1).$ Therefore by Corollary \ref{max},  we get $\ell(\mathcal I_a)=\dim R$ which is a contradiction.
\end{proof}
Let $R$ be a local ring and $I$ an ideal in $R$. In \cite{Br} Brodmann proved that  $\ell(\mathcal I)\leq \dim R-\liminf_n \depth R/I^n$ where $\mathcal I=\{I^n\}$. If $R$ has infinite residue field then Burch improved the result of Brodmann for the filtration $\mathcal I=\{\overline{I^n}\}$ and proved that  $\ell(\mathcal I)\leq \dim R-\liminf_n \depth R/\overline{I^n}$ \cite{Bu}. This result was generalized to the filtration $\mathcal I=\{I^{(n)}\}$ if the Symbolic Rees algebra of $I$ is finitely generated \cite{BV}.  We generalize Burch's result for divisorial filtrations under some extra conditions.
	\begin{Corollary}{\em(Burch's inequality for divisorial filtration)} Suppose $R$ is a local domain and $\mathcal I=\{I_n\}$ is a divisorial filtration in $R$. Suppose one of the following holds.
	\begin{itemize}
		\item[$(i)$] $\mR\in\Ass(R/I_t)$ for some $t\geq 1$.
		\item[$(ii)$] The filtration $\mathcal I$ is an $1$-divisorial filtration. 
	\end{itemize}		
	Then $\ell(\mathcal I)\leq \dim R-\liminf_n \depth R/I_n.$
\end{Corollary}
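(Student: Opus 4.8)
The plan is to reduce Burch's inequality for divisorial filtrations to facts already established in the paper, treating the two hypotheses separately; I expect neither case to require genuinely new work.

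Under hypothesis $(i)$, suppose $\mR\in\Ass(R/I_t)$ for some $t\ge 1$. First I would apply Lemma \ref{valuationlemma2} to produce an integer $n_0$ with $\mR\in\Ass(R/I_n)$ for all $n\ge n_0$; for such $n$ one has $\depth R/I_n=0$. Since $\{\depth R/I_n\}$ is a sequence of nonnegative integers that vanishes for all $n\ge n_0$, its $\liminf$ is $0$, so the asserted inequality collapses to $\ell(\mathcal I)\le\dim R$, which is precisely Lemma \ref{LemmaN4}. (That $R/I_n\ne 0$, so that the depths are defined, follows because $I_t\subsetneq R$ forces $I_1\subsetneq R$ by (\ref{dim*}), hence $I_n\subseteq I_1\subsetneq R$ for all $n\ge 1$.)

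Under hypothesis $(ii)$, suppose $\mathcal I$ is a $1$-divisorial filtration. The key point is that by the definition of the dimension of a filtration together with (\ref{dim*}) we have $\dim R/I_n=s(\mathcal I)=1$ for every $n\ge 1$; in particular $R/I_n\ne 0$ and $\depth R/I_n\le\dim R/I_n=1$, so $\liminf_n\depth R/I_n\le 1$. Combining this with Corollary \ref{CorAS4}, which gives $\ell(\mathcal I)<\dim R$, i.e.\ $\ell(\mathcal I)\le\dim R-1$, I would conclude
\[
\ell(\mathcal I)+\liminf_n\depth R/I_n\le(\dim R-1)+1=\dim R,
\]
which is exactly the claimed bound.

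The only things to watch are the elementary behaviour of $\liminf$ for sequences of nonnegative integers and the routine verification that the quotients $R/I_n$ are nonzero; the substance is entirely carried by Lemma \ref{valuationlemma2}, Lemma \ref{LemmaN4} and Corollary \ref{CorAS4}, so the main obstacle — such as it is — lay in proving those earlier results rather than in this corollary.
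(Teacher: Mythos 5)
Your proof is correct and follows essentially the same route as the paper: case $(i)$ via Lemma \ref{valuationlemma2} plus the bound $\ell(\mathcal I)\le\dim R$, and case $(ii)$ via Corollary \ref{CorAS4}. The only (harmless) difference is in case $(ii)$, where you bound $\liminf_n\depth R/I_n\le 1$ by $\depth\le\dim$, whereas the paper computes $\depth R/I_n=1$ exactly using that the $I_n$ are unmixed of dimension one; your weaker estimate suffices for the stated inequality.
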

\begin{proof}
	$(i)$ By Lemma \ref{valuationlemma2},  there exists a positive integer $n_0$ such that $\mR\in\Ass(R/I_n)$ for all $n\geq n_0$. Thus $\liminf_n \depth R/I_n=0.$ Now by Lemma \ref{LemmaAS2}, $\ell(\mathcal I)\leq \dim R.$
	 	
	$(ii)$	If $\dim R\leq 1$ then $I_n=0$  for all $n\geq 1$ and hence $0=\ell(\mathcal I)\leq \dim R-\liminf_n \depth R/I_n.$ Suppose $\dim R\geq 2.$ Then by Corollary \ref{CorAS4}, we have $\ell(\mathcal I)\le\dim R-1.$ Since $\mathcal I$ is a $1$-divisorial filtration, we have $\depth R/I_n\geq 1$ and $\dim R/I_n=1$ for all $n\geq 1.$ Thus $\depth R/I_n= 1$ for all $n\geq 1.$ Therefore $\ell(\mathcal I)\leq \dim R-\liminf_n \depth R/I_n.$	
\end{proof}

  \section{Symbolic Algebras}\label{SecSym}

 Suppose that $I\subset J$ are proper ideals in a local ring $R$. Define
 $S_J(I)=\oplus_{k\ge 0}I^k:J^{\infty}$ where $I^k:J^{\infty}=\cup_{i=1}^{\infty}I^k:_RJ^i$. 

\begin{Theorem}(\cite[Theorem 2.6]{CHS})
\label{strengthen}
Let $(R,\mm)$ be an excellent domain, and let $I$ and $J$ be proper ideals of $R$. Then the following conditions are equivalent:
\begin{enumerate}
\item[{\em (a)}]  $S_J(I)$ is finitely generated.
\item[{\em (b)}]  There exists an integer $r>0$ such that $\ell((I^r: J^\infty)_P)<\dim R_P$ for all $P\in V(J)$.
\end{enumerate}
\end{Theorem}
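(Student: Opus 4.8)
I would begin by fixing notation: $\mathcal R=R[It]=\bigoplus_{n\ge0}I^nt^n$ (a Noetherian ring, since $R$ is Noetherian), $K_n=I^n:J^\infty$, and $B=S_J(I)=\bigoplus_{n\ge0}K_nt^n\subseteq R[t]$. Three structural facts would organize the argument. First, $\{K_n\}$ is a multiplicative filtration and $B$ is precisely the $J$-saturation of $\mathcal R$ inside $R[t]$: $B=\mathcal R:_{R[t]}(JR[t])^\infty$. Second, since $B$ is $\mathbb N$-graded with $B_0=R$ Noetherian, $B$ is a finitely generated $R$-algebra \emph{iff} $B$ is Noetherian (a homogeneous generating set of the irrelevant ideal $B_+$ generates $B$ over $R$). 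Third, saturation commutes with localization, so $(K_n)_P=(I_P)^n:(J_P)^\infty$, and each $R_P$ is an excellent local domain, hence formally equidimensional; this makes Brodmann's inequality $\ell(\mathfrak b)\le\dim R_P-\liminf_m\depth(R_P/\mathfrak b^m)$ and McAdam's theorem (Theorem~\ref{ThmN10}) available in $R_P$.

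For (a)$\Rightarrow$(b) I would argue as follows. Assuming $B$ finitely generated, invoke the standard fact that a suitable Veronese subalgebra of a finitely generated graded algebra over a Noetherian base is standard graded, producing $r>0$ with $K_{rn}=(K_r)^n$ for all $n$; set $K=K_r=I^r:J^\infty$. For $P\in V(J)$, the ideal $(K_P)^m=(K_{rm})_P=(I_P)^{rm}:(J_P)^\infty$ coincides with its own $(J_P)$-saturation for every $m$, so no associated prime of $R_P/(K_P)^m$ contains $J_P\subseteq PR_P$; in particular $PR_P\notin\Ass(R_P/(K_P)^m)$, so $\depth(R_P/(K_P)^m)\ge1$ for all $m$. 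Brodmann's inequality then gives $\ell((I^r:J^\infty)_P)=\ell(K_P)\le\dim R_P-1<\dim R_P$.

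For (b)$\Rightarrow$(a), with $r$ from (b) and $K=I^r:J^\infty$, the plan has three steps. \emph{Step 1 (propagation).} Show $\bar A^*(K)\cap V(J)=\emptyset$, where $\bar A^*(K)$ is the stable value of $\bigcup_m\Ass(R/\overline{K^m})$ (finite by excellence): a prime $\mathfrak q\in V(J)\cap\bar A^*(K)$ localizes (integral closure commutes with localization) to $\mathfrak qR_{\mathfrak q}\in\bar A^*(K_{\mathfrak q})$, and McAdam's theorem in $R_{\mathfrak q}$ would force $\ell(K_{\mathfrak q})=\dim R_{\mathfrak q}$, contradicting (b); hence $\overline{K^m}:J^\infty=\overline{K^m}$ for all $m\gg0$. \emph{Step 2 ($B^{(r)}$ finitely generated).} From $K^m\subseteq I^{rm}:J^\infty\subseteq K^m:J^\infty$, and $K^m:J^\infty\subseteq\overline{K^m}:J^\infty=\overline{K^m}$ for $m\gg0$, the ring $B^{(r)}=\bigoplus_m(I^{rm}:J^\infty)t^{rm}$ sits, in all large degrees, between $R[Kt^r]$ and $\bigoplus_m\overline{K^m}t^{rm}$; the latter is module-finite over the Noetherian ring $R[Kt^r]$ by excellence, so $B^{(r)}$ is a finitely generated $R[Kt^r]$-module, hence Noetherian, hence a finitely generated $R$-algebra. \emph{Step 3 (descent).} Since $\{K_n\}$ is \emph{decreasing}, $I^{rm+i}:J^\infty\subseteq I^{rm}:J^\infty$, so for each $0\le i<r$ the shifted piece $t^{-i}\bigl(\bigoplus_mB_{rm+i}\bigr)$ is an ideal of $B^{(r)}$, hence a finitely generated $B^{(r)}$-module; summing over $i$, $B$ is a finitely generated $B^{(r)}$-module, hence a finitely generated $R$-algebra.

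The step I expect to be the real obstacle is Step 1. Once one has the graded reduction and Brodmann's inequality, the implication (a)$\Rightarrow$(b) and Steps 2--3 are essentially formal. Step 1 is where the content lies: converting a condition imposed at \emph{every} prime of $V(J)$ into the single asymptotic statement that no prime over $V(J)$ is an eventual associated prime of $\overline{(I^r:J^\infty)^m}$. This is exactly the point at which one must localize and apply McAdam's theorem — which is why excellence (so that the $R_P$ are formally equidimensional) is indispensable, and why one needs the compatibility of asymptotic associated primes with localization. It is this step that parallels the Katz--Ratliff argument of \cite{KR} and is carried out in \cite{CHS}.
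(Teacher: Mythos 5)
The paper offers no proof of this statement --- it is imported verbatim from \cite[Theorem 2.6]{CHS} --- so there is no internal argument to compare against; your proof is correct and is essentially the standard argument of that source (and of Katz--Ratliff \cite{KR}). Both directions check out: the Veronese reduction plus Brodmann's depth inequality for (a)$\Rightarrow$(b), and for (b)$\Rightarrow$(a) the localization--McAdam step excluding $V(J)$ from the asymptotic associated primes of $\overline{(I^r:J^\infty)^m}$ (valid because localizations of an excellent local domain are formally equidimensional), the sandwich of the $r$-th Veronese of $S_J(I)$ between $R[(I^r:J^\infty)t^r]$ and the integral closure of its Rees algebra (module-finite since an excellent domain is analytically unramified), and the final descent using that $\{I^n:J^\infty\}$ is a decreasing multiplicative filtration.
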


 A  related result  was proven by Katz and Ratliff in Theorem A and Corollary~1 of \cite{KR}.
 
 Let $I$ be an ideal in a local ring $R$. For $n\in \ZZ_{>0}$, the $n$-the symbolic power $I^{(n)}$ of $I$ is
 $$
 I^{(n)}=\cap_{\mfp \in \mbox{Ass}(R/I)}(I^nR_{\mfp}\cap R).
 $$
 Let $J$ be the intersection of all asymptotic prime divisors of $I$ which are not minimal primes. Then
 $I^{(n)}=I^n:J^{\infty}$ and the symbolic algebra $\oplus_{n\ge 0}I^{(n)}=S_J(I)$.
 


   \begin{Corollary}\label{LemmaN6} Suppose that $R$ is an excellent local domain of dimension $d$ and $I=P_1\cap \cdots \cap P_r$ is an intersection of
    prime ideals $P_i$ of $R$ of  a common height. Then the ring $\bigoplus_{n\ge 0}I^{(n)}$ is a finitely generated $R$-algebra if and only if there exists $n\in \ZZ_{>0}$ such that the analytic spread $\ell( I^{(n)}_Q)<\mbox{ht}(Q)$ for all prime ideals $Q$ of $R$ which contain $I$ and are not one of the minimal primes $P_i$ of $I$. 
   \end{Corollary}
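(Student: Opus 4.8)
The plan is to deduce this from Theorem~\ref{strengthen} applied to the pair $I$ and $J$, where $J$ is the intersection of the asymptotic prime divisors of $I$ that are not minimal primes of $I$, so that $I^{(n)}=I^n:J^\infty$ and $\bigoplus_{n\ge 0}I^{(n)}=S_J(I)$. First I would record three elementary facts. Since every associated prime of $R/I^N$ contains $\sqrt I$, we have $J\supseteq I$, hence $V(J)\subseteq V(I)$. Since $P_1,\dots,P_r$ have a common height, none of them contains another, so $\Min(I)=\{P_1,\dots,P_r\}$. Finally, no $P_i$ lies in $V(J)$: if $P_i\supseteq J$ then, $P_i$ being prime, it contains one of the non-minimal asymptotic prime divisors $\mathfrak{q}$ of $I$, and $\mathfrak{q}$ strictly contains some $P_k\in\Min(I)$; thus $P_i\supsetneq P_k$ forces $\mbox{ht}(P_i)>\mbox{ht}(P_k)$, contradicting the common-height hypothesis. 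Consequently $V(J)\subseteq\{\,Q\in\Spec R:\ I\subseteq Q,\ Q\ne P_i\ \mbox{ for all }i\,\}$.

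For the ``if'' direction, assume $\ell(I^{(n)}_Q)<\mbox{ht}(Q)$ for all $Q\supseteq I$ with $Q\notin\{P_1,\dots,P_r\}$. Then for each $P\in V(J)$ we have $P\supseteq I$ and $P\ne P_i$, so $\ell\bigl((I^n:J^\infty)_P\bigr)=\ell(I^{(n)}_P)<\mbox{ht}(P)=\dim R_P$; hence condition~(b) of Theorem~\ref{strengthen} holds with $r=n$, and $S_J(I)=\bigoplus_{n\ge 0}I^{(n)}$ is finitely generated. This direction is essentially formal.

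For the ``only if'' direction, assume $\bigoplus_{n\ge 0}I^{(n)}$ is finitely generated; by Theorem~\ref{strengthen} there is $r>0$ with $\ell(I^{(r)}_P)<\dim R_P$ for every $P\in V(J)$, and I claim the corollary holds with $n=r$. Let $Q\supseteq I$ with $Q\ne P_i$ for all $i$. If $Q\in V(J)$ then $\ell(I^{(r)}_Q)<\dim R_Q=\mbox{ht}(Q)$ and we are done. The remaining case $Q\notin V(J)$ is the crux. Here $J\not\subseteq Q$, so $J_Q=R_Q$ and $I^{(r)}_Q=(I^r:J^\infty)_Q=I_Q^{\,r}$; since $R[It]$ is a finite module over its $r$-th Veronese $R[I^rt^r]$ (so that $\ell(I^r)=\ell(I)$ for any $r\ge 1$), we get $\ell(I^{(r)}_Q)=\ell(I_Q)$. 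Suppose, for contradiction, that $\ell(I_Q)=\dim R_Q$. As a localization of the excellent domain $R$, the ring $R_Q$ is an excellent local domain, hence formally equidimensional (being universally catenary and equidimensional), so Theorem~\ref{ThmN10} gives $QR_Q\in\Ass\bigl(R_Q/\overline{(I_Q)^m}\bigr)$ for some $m$. Since the integral closure of an ideal and the formation of associated primes both commute with localization, this yields $Q\in\Ass(R/\overline{I^m})$; and since an associated prime of some $\overline{I^m}$ is an asymptotic prime divisor of $I$ (see \cite{HS}), $Q$ is such a prime. As $Q$ is not minimal over $I$, it is one of the non-minimal asymptotic prime divisors defining $J$, so $Q\supseteq J$, i.e.\ $Q\in V(J)$ --- contradicting $Q\notin V(J)$. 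Hence $\ell(I^{(r)}_Q)=\ell(I_Q)<\dim R_Q=\mbox{ht}(Q)$, as required.

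The essential difficulty is confined to that last case: the family $\{Q\supseteq I:\ Q\ne P_i\}$ in the statement is strictly larger than $V(J)$ in general (for example $V(J)=\emptyset$ when no symbolic power of $I$ has an embedded component), and one must check that the extra primes impose no condition --- that is, $\ell(I_Q)<\dim R_Q$ automatically at every prime $Q\supseteq I$ that is neither minimal over $I$ nor contains a non-minimal asymptotic prime divisor of $I$. This is precisely where McAdam's theorem (Theorem~\ref{ThmN10}) and the comparison of ordinary with integral-closure asymptotic prime divisors are used; one can also note, as in the remark following the statement, that at the minimal primes $P_i$ one has $\ell(I^{(n)}_{P_i})=\mbox{ht}(P_i)$, which is exactly why those primes are excluded from the statement.
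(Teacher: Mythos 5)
Your proof is correct and follows the route the paper intends: the corollary is presented as an immediate consequence of Theorem \ref{strengthen} via the identification $\bigoplus_{n\ge 0}I^{(n)}=S_J(I)$, and what you add is the explicit (and genuinely necessary) translation between condition (b), quantified over $V(J)$, and the corollary's condition, quantified over \emph{all} non-minimal primes containing $I$ --- your ``crux'' case is exactly the content hidden in the word ``immediate.'' The one step resting on an unspecified citation is the assertion that $Q\in\Ass(R/\overline{I^m})$ makes $Q$ one of the asymptotic prime divisors used to define $J$, i.e.\ the containment $\bar A^*(I)\subseteq A^*(I)$ of integral-closure asymptotic primes in ordinary ones; this is true (McAdam, \emph{Asymptotic Prime Divisors}, Prop.~3.17), but in your situation it can be bypassed entirely by invoking Brodmann's inequality $\ell(I_Q)\le \dim R_Q-\liminf_n\depth(R_Q/I_Q^n)$ (already cited in the paper), which places $Q$ directly in some $\Ass(R/I^n)$ and hence in $V(J)$. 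Finally, note that Theorem \ref{strengthen} requires $J$ proper, so the degenerate case $J=R$ (no embedded asymptotic primes) should be set aside separately; there $I^{(n)}=I^n$, the symbolic algebra is trivially finitely generated, and your Case~2 argument still verifies the analytic-spread condition.
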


 With our assumption that $R$ is a local ring of dimension $d$ and $I=P_1\cap \cdots \cap P_r$ is an intersection of prime ideals $P_i$ of $R$ of  a common height in Corollary \ref{LemmaN6}, we have that  $\ell( I^{(n)}_{P_i})=\mbox{ht}(P_i)=\mbox{ht}(I)$  for the minimal primes $P_i$ of $I$.

 \begin{Corollary}\label{CorN70} Suppose that $R$ is an excellent local domain of dimension $d$ and $I=P_1\cap \cdots \cap P_r$ is an intersection of
    prime ideals $P_i$ of $R$ of  a common height. If $I^{(n)}$ is equimultiple for some $n$ then the symbolic algebra $\oplus_{n\ge 0}I^{(n)}$ is a finitely generated $R$-algebra.
    \end{Corollary}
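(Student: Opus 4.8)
The plan is to verify condition (b) of Corollary~\ref{LemmaN6} directly from the hypothesis that some symbolic power $I^{(n)}$ is equimultiple, using the elementary fact recalled in the introduction that equimultiplicity of an ideal is inherited by all of its localizations.

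First I would record the height bookkeeping. Write $h=\mathrm{ht}(I)=\mathrm{ht}(P_1)=\cdots=\mathrm{ht}(P_r)$. Since the $P_i$ have a common height they are incomparable, so the minimal primes of $I^{(n)}$ are exactly $P_1,\ldots,P_r$; hence $\sqrt{I^{(n)}}=I$, $\mathrm{ht}(I^{(n)})=h$, and (as $I^{(n)}\subseteq I$) a prime $Q$ of $R$ contains $I^{(n)}$ if and only if it contains $I$. Now fix $n$ with $I^{(n)}$ equimultiple, i.e. $\ell(I^{(n)})=\mathrm{ht}(I^{(n)})=h$.

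Next, for any prime $Q$ containing $I$ (equivalently containing $I^{(n)}$), applying to the ideal $I^{(n)}$ the same chain of inequalities that gives (\ref{eqN7*}), namely
\[
h=\mathrm{ht}(I^{(n)})\le \mathrm{ht}(I^{(n)}R_Q)\le \ell(I^{(n)}_Q)\le \ell(I^{(n)})=h,
\]
where the second inequality is (\ref{eqN3}) applied in $R_Q$ and the third is the upper semicontinuity (\ref{eqN2}), we conclude $\ell(I^{(n)}_Q)=h$. Finally, if $Q$ is a prime containing $I$ which is not one of the minimal primes $P_i$, then $Q\supsetneq P_i$ for some $i$, so $\mathrm{ht}(Q)\ge h+1>h=\ell(I^{(n)}_Q)$. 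Thus this $n$ witnesses condition (b) of Corollary~\ref{LemmaN6}, and that corollary gives that $\bigoplus_{n\ge 0}I^{(n)}$ is a finitely generated $R$-algebra.

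There is no genuine obstacle here: the argument is a brief combination of height computations with the observation that equimultiplicity localizes. The only points needing (mild) care are that $\mathrm{ht}(I^{(n)})=h$ and that every prime $Q\supseteq I$ with $Q\ne P_i$ for all $i$ has height at least $h+1$; both use precisely the hypothesis that $P_1,\ldots,P_r$ share a common height, which forces them to be incomparable and to be exactly the minimal primes of $I^{(n)}$.
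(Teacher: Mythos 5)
Your proof is correct and follows essentially the same route as the paper: the paper likewise deduces $\ell(I^{(n)}_Q)=\mathrm{ht}(I^{(n)}_Q)$ for all $Q\supseteq I$ from equimultiplicity via (\ref{eqN7*}) (whose derivation in the introduction is exactly your chain of inequalities using (\ref{eqN2}) and (\ref{eqN3})), and then observes that $\mathrm{ht}(I_Q)<\dim R_Q$ at non-minimal primes to invoke Corollary \ref{LemmaN6}. Your write-up merely makes the height bookkeeping ($\sqrt{I^{(n)}}=I$, $\mathrm{ht}(I^{(n)})=h$) explicit.
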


 \begin{proof}    If $I^{(n)}$ is equimultiple, then by (\ref{eqN7*}), 
    $\ell(I^{(n)}_Q)=\mbox{ht}(I^{(n)}_Q)$ for all prime ideals $Q$ containing $I$, so that if $Q$ is not a minimal prime of $I$, we have that $\ell(I^{(n)}_Q)=\mbox{ht}(I_Q)<\dim R_Q$, and so $I^{(n)}$ satisfies the criterion of Corollary \ref{LemmaN6}.
    \end{proof}
    
    However, there exist ideals $I$ such that the symbolic algebra $\oplus_{n\ge 0}I^{(n)}$ is a finitely generated $R$-algebra but no symbolic power $I^{(n)}$ is equimultiple,  as is shown in the following example.

    \begin{Example}\label{ExAS1}(\cite[Example 8.4]{CS}) There exists a height one prime ideal $P$ in a normal, excellent 3 dimensional local ring $R$
    such that no symbolic power of $P$ is equimultiple but the symbolic algebra $\oplus_{n\ge 0}P^{(n)}$ is a finitely generated $R$-algebra.
    \end{Example}

  
  
 
  In contrast to the conclusions of Corollary \ref{LemmaN6}, we have that  inequality of analytic spread and height    $\ell(\mathcal I_Q)<\mbox{ht}(Q)$ holds at all non minimal primes for  symbolic filtrations, irregardless of whether their symbolic algebra is a finitely generated $R$-algebra. 
 The following proposition follows from Corollary \ref{CorN31}.

   \begin{Proposition}\label{PropN22} Suppose that $R$ is a local domain of dimension $d$ and $I=P_1\cap \cdots \cap P_r$ is an intersection of
    prime ideals $P_i$ of $R$ of  a common positive height. Suppose $R_{P_i}$ is a regular local ring for $1\leq i\leq r$. Let $\mathcal I=\{I^{(n)}\}$ be the symbolic filtration of $I$. Then the analytic spread $\ell( \mathcal I_Q)<\mbox{ht}(Q)$ for all prime ideals $Q$ of $R$ which contain $I$ and are not one of the minimal primes $P_i$ of $I$ and $\ell(\mathcal I_{P_i})=\mbox{ht}(P_i)=\mbox{ht}(I)$
    for all minimal primes  $P_i$ of $I$.
    \end{Proposition}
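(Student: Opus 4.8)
The plan is to identify $\mathcal I=\{I^{(n)}\}$ as a divisorial filtration of $R$ and then to invoke Corollary \ref{CorN31} at the non-minimal primes and a direct computation in $R_{P_i}$ at the minimal primes. The preliminary point is that, since the $P_i$ are primes of a common height in the domain $R$, they are pairwise incomparable, so $I=P_1\cap\cdots\cap P_r$ is the minimal primary decomposition of the radical ideal $I$, $\Ass(R/I)=\{P_1,\dots,P_r\}$, and $\mbox{ht}(I)=\mbox{ht}(P_i)$ for each $i$. Localizing at $P_i$ makes every $P_j$ with $j\ne i$ the unit ideal, whence $IR_{P_i}=P_iR_{P_i}$ and $I^nR_{P_i}\cap R=P_i^nR_{P_i}\cap R=P_i^{(n)}$; therefore $I^{(n)}=\bigcap_{i=1}^r P_i^{(n)}$ for all $n\ge 1$. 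Since $R_{P_i}$ is regular, the $P_iR_{P_i}$-adic order is a discrete valuation $\nu_i$ of the quotient field $K$ of $R$, nonnegative on $R$, with center $P_i$ on $R$, with residue field of transcendence degree $\mbox{ht}(P_i)-1$ over $\kappa(P_i)$, and with valuation ideals $I(\nu_i)_n=P_i^{(n)}$; thus each $\nu_i$ is a divisorial valuation of $R$ and $I^{(n)}=I(\nu_1)_n\cap\cdots\cap I(\nu_r)_n$ exhibits $\mathcal I$ as a divisorial filtration (an $s$-divisorial filtration when the $P_i$ moreover have a common dimension $s=\dim R/I$).

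Now let $Q$ be a prime ideal of $R$ containing $I$. If $Q$ is not one of the $P_i$ then, since the $P_i$ are exactly the minimal primes of $I_1=I$, Corollary \ref{CorN31} gives $\ell(\mathcal I_Q)<\dim R_Q=\mbox{ht}(Q)$; equivalently one argues directly as in the proof of that corollary, noting that $\mathcal I_Q$ is the divisorial filtration $\{\bigcap_{P_i\subseteq Q} I(\nu_i)_nR_Q\}$ of $R_Q$, that the center of $\nu_i$ on $R_Q$ is $P_iR_Q\ne QR_Q$, and hence by Theorem \ref{PropAS7} that $\ell(\mathcal I_Q)=\dim R_Q$ is impossible. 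If $Q=P_i$, then by the computation above $I^{(n)}R_{P_i}=P_i^nR_{P_i}=m_{R_{P_i}}^n$, so $\mathcal I_{P_i}$ is the $m_{R_{P_i}}$-adic filtration of the local ring $R_{P_i}$; consequently $\ell(\mathcal I_{P_i})=\ell(m_{R_{P_i}})=\dim R_{P_i}=\mbox{ht}(P_i)=\mbox{ht}(I)$, using at the last step that the $P_i$ have a common height.

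The main obstacle is the identification in the first paragraph: one must check that the $P_iR_{P_i}$-adic order valuation of the regular local ring $R_{P_i}$ is a genuine divisorial valuation of the possibly non-regular ring $R$, with valuation ideals exactly the symbolic powers $P_i^{(n)}$, and that these intersect to $I^{(n)}$. Once this is in hand, the two cases above follow immediately from Corollary \ref{CorN31} (or Theorem \ref{PropAS7}) and from the classical fact that the analytic spread of the maximal ideal of a local ring equals its dimension. A minor technical point is to justify that localization commutes with the finite intersections and symbolic powers occurring here, so that $\mathcal I_Q$ really is again a divisorial filtration of the displayed form in $R_Q$.
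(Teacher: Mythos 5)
Your proof is correct and takes essentially the same route as the paper, which merely remarks that the proposition ``follows from Corollary \ref{CorN31}'': identifying $\{I^{(n)}\}$ with the divisorial filtration $\{\bigcap_i I(\nu_i)_n\}$ attached to the order valuations of the regular local rings $R_{P_i}$, applying Corollary \ref{CorN31} (via Theorem \ref{PropAS7}) at the non-minimal primes, and computing that $\mathcal I_{P_i}$ is the $m_{R_{P_i}}$-adic filtration is exactly the suppressed content. Your added care in noting that only a common height (not a common dimension) is hypothesized, so that one should invoke Theorem \ref{PropAS7} for the localized divisorial filtration directly rather than the literal $s$-divisorial statement, is appropriate.
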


 \begin{Proposition} Let $R$ be a local domain of positive dimension. Let $\mfp$ be a prime ideal in $R$ such that $\mbox{ht}(\mfp)=\dim R-1$ (so that $\dim R/\mfp=1$). Let $d\in \ZZ_{>0}$. If the $\mfp^{(d)}$-adic filtration 
 $\{(\mfp^{(d)})^n\}_{n\in\NN}$ is a 1-divisorial filtration then $\mfp^{(d)}$ is equimultiple. 	
 \end{Proposition}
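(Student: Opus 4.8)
The plan is to pin down $\ell(\mfp^{(d)})$ by trapping it strictly below $\dim R$ and at the same time at least $\dim R - 1$, which forces equality with $\height(\mfp^{(d)})$.

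First I would establish the lower bound. Since $\mfp^{(d)} = \mfp^d R_{\mfp}\cap R$ is $\mfp$-primary, its unique minimal prime is $\mfp$, so $\height(\mfp^{(d)}) = \height(\mfp) = \dim R - 1$ by hypothesis. The classical inequality $\height(I)\le\ell(I)$ recorded in \eqref{eqN3} then gives $\dim R - 1 = \height(\mfp^{(d)}) \le \ell(\mfp^{(d)})$.

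Next I would extract the strict upper bound from the divisorial hypothesis. Let $\mathcal I = \{(\mfp^{(d)})^n\}_{n\in\NN}$ be the $\mfp^{(d)}$-adic filtration. Because $\mathcal I$ is an $I$-adic filtration we have $T_{\mathcal I} = T_{\mfp^{(d)}}$, hence $\ell(\mathcal I) = \ell(\mfp^{(d)})$ by the discussion following \eqref{N3}. By assumption $\mathcal I$ is a $1$-divisorial filtration, that is, an $s$-divisorial filtration with $s = 1\ge 1$, so Corollary \ref{CorAS4} applies and yields $\ell(\mfp^{(d)}) = \ell(\mathcal I) < \dim R$.

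Combining the two bounds gives $\dim R - 1 \le \ell(\mfp^{(d)}) < \dim R$, and since $\ell(\mfp^{(d)})$ is an integer we conclude $\ell(\mfp^{(d)}) = \dim R - 1 = \height(\mfp^{(d)})$; thus $\mfp^{(d)}$ is equimultiple. I do not expect a genuine obstacle here: the only point that needs care is the bookkeeping that a $1$-divisorial filtration is exactly an $s$-divisorial filtration with $s=1$, so that Corollary \ref{CorAS4} is applicable; and one may observe that when $\dim R = 1$ the hypothesis is vacuous, since a one-dimensional local domain has no $1$-valuations, so effectively $\dim R \ge 2$.
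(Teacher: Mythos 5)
Your proof is correct and follows essentially the same route as the paper: both arguments sandwich $\ell(\mfp^{(d)})$ between $\mbox{ht}(\mfp^{(d)})=\dim R-1$ (via the classical bound (\ref{eqN3})) and the strict upper bound $\ell(\mfp^{(d)})<\dim R$, the latter coming from the $1$-divisorial hypothesis via Theorem \ref{PropAS7} (your citation of Corollary \ref{CorAS4} is just the packaged form of that theorem for $s$-divisorial filtrations with $s\ge 1$). The identification $\ell(\mathcal I)=\ell(\mfp^{(d)})$ for the adic filtration and the observation about $\dim R=1$ are fine.
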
 
  
  \begin{proof} We have that 
  $$
  \dim R-1=\mbox{ht}(\mfp^{(d)})\le \ell(\mfp^{(d)})\le \dim R
  $$
  and $\ell(\mfp^{(d)})\ne \dim R$ by \cite{Mc} or \cite[Theorem 5.4.6]{HS} (or by Theorem \ref{PropAS7} above).
  \end{proof}

  \begin{Proposition}\label{PropAS10} Let $R$ be  a normal, excellent  local domain of dimension three with an isolated singularity and $I$ be  an intersection of (a finite number of) height two prime ideals of $R$. Let $\mathcal I=\{I^{(m)}\}$ be the filtration of symbolic powers of $I$, so that $\mathcal I$ is a 1-divisorial filtration of $R$. 
  Then $\mathcal I$ is not Noetherian if and only if the $a$-th truncation $\mathcal I_a$ of $\mathcal I$ (Definition \ref{trunc}) satisfies 
  $$
  \ell(\mathcal I_a)=3\mbox{ for all $a\in \ZZ_{>0}$}.
  $$
  \end{Proposition}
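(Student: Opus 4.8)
The plan is to reduce both implications to Theorem \ref{ThmN10} applied to a single Veronese piece of a truncation $\mathcal I_a$, using two structural facts. First, since $R$ is catenary of dimension $3$ and each $P_i$ has height $2$, we have $\dim R/P_i=1$, so $m_R$ is the only prime of $R$ strictly containing any $P_i$; hence $m_R$ is the only prime that can occur as an embedded associated prime of an ideal $J$ with $\sqrt J=I$. Second, because $R$ has an isolated singularity, each $R_{P_i}$ is a two-dimensional regular local ring, so $\nu_i:=\operatorname{ord}_{P_iR_{P_i}}$ is a divisorial $1$-valuation with $I(\nu_i)_n=P_i^{(n)}$ and $I^{(n)}=\bigcap_{i=1}^rP_i^{(n)}=\bigcap_{i=1}^rI(\nu_i)_n$ (so $\mathcal I$ is $1$-divisorial and $\Ass(R/I^{(n)})=\{P_1,\dots,P_r\}$ for every $n$); moreover for any $a$, localizing the truncated filtration $\mathcal I_a$ at $P_i$ yields the $P_iR_{P_i}$-adic filtration, so that $(\overline{I_{a,n}})_{P_i}=\overline{(P_iR_{P_i})^n}=(P_iR_{P_i})^n$, since powers of the maximal ideal of a regular local ring are integrally closed.

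For the direction ``$\ell(\mathcal I_a)=3$ for all $a$ $\Rightarrow$ $\mathcal I$ is not Noetherian'' I would argue the contrapositive: if $\mathcal I$ is Noetherian then $R[\mathcal I]$ is generated in degrees $\le a_0$ for some $a_0$, hence $R[\mathcal I_{a_0}]=R[\mathcal I]$ and $\ell(\mathcal I_{a_0})=\ell(\mathcal I)$; but $\mathcal I$ is $1$-divisorial with $\dim R=3$, so Corollary \ref{CorAS4} gives $\ell(\mathcal I)<3$ and therefore $\ell(\mathcal I_{a_0})<3$.

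For the converse, the substantive direction, I would again argue contrapositively: assume $\ell(\mathcal I_a)<3$ for some fixed $a$, and deduce that $\mathcal I$ is Noetherian. Choose $e\in\ZZ_{>0}$ divisible by a suitable fixed integer so that $R[\mathcal I_a]^{(e)}$ is standard graded, i.e. $I_{a,ek}=(I_{a,e})^k$ for all $k\ge1$, and (using the integer of Proposition \ref{PropAS11} for $\mathcal I_a$) $\ell(I_{a,ek})=\ell(\mathcal I_a)<3$ for all $k$. Since $R$ is a normal excellent local domain it is formally equidimensional, so Theorem \ref{ThmN10} applied to the ideal $I_{a,e}$ gives $m_R\notin\Ass(R/\overline{(I_{a,e})^k})=\Ass(R/\overline{I_{a,ek}})$ for all $k$. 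By the setup the only possible embedded prime of $\overline{I_{a,ek}}$ is $m_R$, so $\overline{I_{a,ek}}$ has no embedded component, whence $\overline{I_{a,ek}}=\bigcap_i\big((\overline{I_{a,ek}})_{P_i}\cap R\big)=\bigcap_i\big(P_i^{ek}R_{P_i}\cap R\big)=I^{(ek)}=I_{ek}$. Therefore $R[\mathcal I]^{(e)}=\bigoplus_kI_{ek}t^{ek}=\bigoplus_k\overline{I_{a,ek}}\,t^{ek}=R[\overline{\mathcal I_a}]^{(e)}$, which is a finitely generated $R$-algebra because $\overline{\mathcal I_a}$ is a Noetherian filtration ($R$ excellent forces $R[\overline{\mathcal I_a}]$ to be module-finite over $R[\mathcal I_a]$). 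Finally, $R[\mathcal I]^{(e)}$ finitely generated forces $R[\mathcal I]$ finitely generated: writing $R[\mathcal I]=\bigoplus_{j=0}^{e-1}M_j$ with $M_j=\bigoplus_{k\ge0}I_{ek+j}t^{ek+j}$, the inclusions $I_{ek+j}\subseteq I_{ek}$ show each $M_j$ is an $R[\mathcal I]^{(e)}$-submodule of the free rank-one module $t^jR[\mathcal I]^{(e)}$, hence finitely generated over the Noetherian ring $R[\mathcal I]^{(e)}$; thus $R[\mathcal I]$ is a finitely generated $R$-algebra, i.e. $\mathcal I$ is Noetherian.

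The main obstacle is this converse, whose crux is that if $\mathcal I$ fails to be Noetherian then for \emph{every} $a$ this failure must surface as a genuine $m_R$-primary component in some $\overline{I_{a,n}}$, even though $I_n=I^{(n)}$ itself never has such a component; the height/dimension hypotheses and the isolated singularity are precisely what concentrate the discrepancy at $m_R$ and let Theorem \ref{ThmN10} detect it. The only input beyond the excerpt is the elementary fact that a finitely generated Veronese subalgebra of $R[\mathcal I]$ forces $R[\mathcal I]$ to be finitely generated, which holds for any filtration by the direct-summand argument above; one should also check that a single $e$ can be chosen compatibly with Proposition \ref{PropAS11} and the standard-graded reduction, which is routine (take $e$ a large common multiple).
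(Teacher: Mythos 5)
Your proof is correct, and for the substantive direction it takes a genuinely different route from the paper's. The easy direction (Noetherian implies $\ell(\mathcal I_{a})<3$ for large $a$, via Proposition \ref{PropAS11} and Corollary \ref{CorAS4}) is the same in both. For the converse, the paper assumes $\ell(\mathcal I_a)=2$, passes to the normalized blowup $X$ of $\overline{I_{a,d}}$, observes that no prime divisor of $X$ contracts to $m_R$, identifies $\pi_*\mathcal O_X(-nE)$ with $I^{(n)}$, and then proves finite generation of $\oplus_n I^{(n)}$ by two rounds of sheaf cohomology (Serre finiteness of $H^1$, first for the Veronese $\oplus_n I^{(nd)}$ and then, via presentations of the reflexive sheaves $\mathcal O_X(-iE)$, for the remaining congruence classes). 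You replace all of this with commutative algebra: Theorem \ref{ThmN10} applies because a normal excellent local domain is formally equidimensional (Ratliff), giving $m_R\notin \mbox{Ass}(R/\overline{(I_{a,e})^k})$; the height hypotheses force any embedded prime to be $m_R$, so $\overline{(I_{a,e})^k}$ has no embedded component and is recovered from its localizations at the $P_i$, where $(\mathcal I_a)_{P_i}$ is the $P_iR_{P_i}$-adic filtration of a two-dimensional regular local ring and powers of the maximal ideal are integrally closed; hence $\overline{(I_{a,e})^k}=I^{(ek)}$ exactly, so the $e$-th Veronese of the symbolic algebra is the normalization of a Rees algebra, which is module-finite over it since $R$ is excellent. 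Your Veronese-descent step (each $M_j$ is a submodule of the free rank-one module $t^jR[\mathcal I]^{(e)}$ over the Noetherian ring $R[\mathcal I]^{(e)}$, using $I^{(ek+j)}\subset I^{(ek)}$) is a clean and valid substitute for the paper's second cohomological argument. What the paper's route buys is explicit geometric information about the divisor $E$ and the sheaves $\mathcal O_X(-nE)$; what yours buys is brevity and independence from cohomology. The bookkeeping with $e$ (a common multiple making $I_{a,ek}=(I_{a,e})^k$ and $\ell(I_{a,e})=\ell(\mathcal I_a)$) is routine, as you note.
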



  \begin{proof}
  We have that  $\ell(\mathcal I)\le2$ by Corollary \ref{CorAS4}. 
 If $\mathcal I$ is a Noetherian filtration, then  $R[\mathcal I_a]=R[\mathcal I]$ for all $a$ sufficiently large, so that, by Proposition  \ref{PropAS11},
$2=\mbox{ht}(I)\le \ell(\mathcal I_a)=\ell(\mathcal I)\le 2$.

 Suppose that  $\mathcal I$ is not  Noetherian. We will show that   $\ell(\mathcal I_a)=3$ for all $a>0$.
 We will prove this statement, by assuming that $\ell(\mathcal I_a)=2$ for some $a$, and deriving a contradiction.
 Write $I=\mfp_1\cap\cdots\cap \mfp_r$ where $\mfp_1,\ldots,\mfp_r$ are height two prime ideals in $R$. Let $\nu_i$ be the ${\mfp_i}_{\mfp_i}$-adic valuation of $R_{\mfp_i}$. Then $I(\nu_i)_n=\mfp_i^{(n)}$ for $1\le i\le r$ and all $n\in \NN$, and $I^{(n)}=I(\nu_1)_n\cap\cdots\cap I(\nu_r)_n$ for all $n\in \NN$.

  Let $\overline {\mathcal I_a}=\{\overline{I_{a,n}}\}$, the filtration of integral closures of the ideals in $\mathcal I_a$. Then $R[\overline{\mathcal  I_a}]$ is finite over $R[\mathcal I_a]$.
 There exists $d>0$ such that $\overline{I_{a,nd}}=(\overline{I_{a,d}})^n$ for all $n\ge 0$,
 $\ell(\mathcal I_a)=\ell(I_{a,d})$ and $\ell(\overline{\mathcal I_a})=\ell(\overline {I_{a,d}})$
 by Remark \ref{RemAS1} and Proposition \ref{PropAS11}. Thus $\ell(\overline{I_{a,d}})=2$.

 Let $\pi:X=\mbox{Proj}(R[\overline{I_{a,d}}])\rightarrow \mbox{Spec}(R)$ be the 
 blow up of $\overline{I_{a,d}}$. $X$ is normal since the ring $R[\overline{I_{a,d}}]=\sum_{n\ge 0}\overline{I_{a,dn}}t^n$ is integrally closed.   Since $\ell(\overline{I_{a,d}})=2$, $\dim \pi^{-1}(m_R)=1$ and so there are no prime divisors on $X$ which contract to $m_R$. Thus
 $\overline{I_{a,d}}\mathcal O_X=\mathcal O_X(-dE)$ where $E=E_1+\cdots+E_r$ is the sum of prime divisors $E_i$ on $X$ such that the valuation $\nu_{E_i}=\nu_i$. 
 Since $X$ and $R$ are normal, we have that 
 $$
 \pi_*\mathcal O_X(-nE)=I(\nu_1)_n\cap\cdots\cap I(\nu_r)_n=\mfp_1^{(n)}\cap\cdots\cap \mfp_r^{(n)}=I^{(n)}
$$ for all $n\in\NN$. 
  
 There exists a graded  exact sequence  
 $$
 0\rightarrow K\rightarrow R[x_0,\ldots,x_m]\rightarrow R[\overline{\mathcal I_{a,d}}]\rightarrow 0,
 $$
 which gives a closed embedding of $X$ into $\PP^m_R$, such that $\mathcal O_{\PP^m_R}(1)\otimes \mathcal O_X\cong \mathcal O_X(-dE)$.
 
 Sheafify this sequence to get  short exact sequences
 $$
 0\rightarrow \mathcal K(n)\rightarrow \mathcal O_{\PP^m_R}(n)\rightarrow \mathcal O_X(-ndE)\rightarrow 0
 $$
 and take global sections to get an exact sequence of $R$-algebras (by \cite[Proposition II.5.13]{Ha})
 $$
R[x_0,\ldots,x_m]=\oplus_{n\ge 0}H^0(\PP^m_R,\mathcal O_{\PP^m_R}(n))\rightarrow \sum_{n\ge 0}I^{(nd)}
\rightarrow \oplus_{n\ge 0}H^1(\PP^m_R,\mathcal K(n)).
$$ 
 We have that $\oplus_{n\ge 0}H^1(\PP^m_R,\mathcal K(n))$ is a finitely generated $R$-module by \cite[Theorem III.5.2(b)]{Ha}.
 Thus $A:=\oplus_{n\ge 0}I^{(nd)}$ is a finitely generated $R$-algebra. 
 
 Since $\mathcal O_X(-dE)$ is an invertible sheaf, for $i,n\in \ZZ$, the reflexive rank 1 sheaf of the Weil divisor $-(i+nd)E$ is
 $\mathcal O_X(-(i+nd)E)\cong \mathcal O_X(-iE)\otimes\mathcal O_X(-ndE)$.
 By \cite[Corollary II.5.18]{Ha}, for $i>0$, there is a short exact sequence of coherent $\mathcal O_X$-modules
 \begin{equation}\label{eqAS2}
 0\rightarrow L\rightarrow \sum_{j=1}^s\mathcal O_X(-n_jdE)\rightarrow \mathcal O_X(-iE)\rightarrow 0.
 \end{equation}
 with $n_j\in \ZZ$. After possibly replacing $i$ with a smaller integer which is equivalent to $i$ modulo $d$, we may assume that all $n_j$ are positive. Now for all $j$, $J_j:=\oplus_{n\geq 0}~ \pi_*(\mathcal O_X(-(n_j+n)dE))$ is a graded ideal in $A$, so it is a finitely generated $A$-module.  From (\ref{eqAS2}) we obtain a short exact sequence of $A$-modules
 $$
 \sum_{j=1}^sJ_j\rightarrow \sum_{n\ge 0}I^{(i+nd)}\rightarrow M.
 $$
 where $M=\sum_{n\ge 0}H^1(X,L\otimes\mathcal O_X(-ndE))$ is a finitely generated $R$-module (again by \cite[Theorem III.5.2(b)]{Ha}). Thus  $\sum_{n\ge 0}I^{(i+nd)}$ is a finitely generated $A$-module, and so $\oplus_{n\ge 0}I^{(n)}$ is a finitely generated $R$-algebra, in contradiction to our assumption. 
 \end{proof}
 
 We have the following theorem, that  uses  examples which will be constructed in  Section \ref{SecEx}.
    
    \begin{Theorem}\label{PropN51} Suppose that $R$ is a regular local ring of dimension 3, and $\mathfrak p$ is a height two prime ideal of $R$. Let $\mathcal I=\{\mathfrak p^{(n)}\}$ be the symbolic filtration of $\mathfrak p$. Then $\ell(\mathcal I)\le 2$ and all values $\ell(\mathcal I)=0,1,2$ can occur.
 \end{Theorem}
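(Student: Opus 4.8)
The plan is to handle the bound $\ell(\mathcal I)\le 2$ and the three realizations $\ell(\mathcal I)=2,1,0$ separately: the first two are short deductions from material already in hand, while the last two rest on the explicit examples to be constructed in Section \ref{SecEx}. For the upper bound, note that since $R$ is regular and $\operatorname{ht}(\mathfrak p)=2$, the ring $R_{\mathfrak p}$ is a two-dimensional regular local ring, so its $\mathfrak p_{\mathfrak p}$-adic order function is a discrete valuation $\nu$ of the quotient field of $R$ with $I(\nu)_n=\mathfrak p^{(n)}$ for all $n$; its center on $R$ is $\mathfrak p$, and $\operatorname{trdeg}_{\kappa(\mathfrak p)}\kappa(\nu)=\operatorname{ht}(\mathfrak p)-1=1=\dim R/\mathfrak p$, so $\nu$ is a $1$-valuation and $\mathcal I=\{I(\nu)_n\}$ is a $1$-divisorial filtration. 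Corollary \ref{CorAS4} then gives $\ell(\mathcal I)<\dim R=3$, that is $\ell(\mathcal I)\le 2$.

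To realize $\ell(\mathcal I)=2$, I would take $\mathfrak p=(x,y)$ with $x,y,z$ a regular system of parameters of $R$. Then $\mathfrak p$ is generated by a regular sequence, so $\operatorname{gr}_{\mathfrak p}(R)\cong (R/\mathfrak p)[X,Y]$ is a domain, whence $\mathfrak p$ is the unique associated prime of $\mathfrak p^n$ and $\mathfrak p^{(n)}=\mathfrak p^n$ for all $n$. Thus $\mathcal I$ is the $\mathfrak p$-adic filtration, $R[\mathcal I]=R[\mathfrak p t]$, and $\ell(\mathcal I)=\ell(\mathfrak p)$. Since $\mathfrak p$ is minimally generated by two elements, $T_{\mathfrak p}$ is a quotient of a polynomial ring in two variables over $R/m_R$, so $\ell(\mathfrak p)\le 2$, while $\ell(\mathfrak p)\ge\operatorname{ht}(\mathfrak p)=2$ by (\ref{eqN3}); hence $\ell(\mathcal I)=2$.

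For $\ell(\mathcal I)=0$ and $\ell(\mathcal I)=1$, observe first that by Proposition \ref{PropAS11} a Noetherian filtration would satisfy $\operatorname{ht}(\mathcal I)=\operatorname{ht}(\mathfrak p)=2\le\ell(\mathcal I)$, so these values force the symbolic Rees algebra $\bigoplus_{n\ge 0}\mathfrak p^{(n)}$ to be non-Noetherian; one therefore must use the delicate non-Noetherian examples constructed in Section \ref{SecEx} (in the spirit of the non-Noetherian symbolic algebra examples of Roberts and of Goto--Nishida--Watanabe). For such an example I would compute $T_{\mathcal I}=R[\mathcal I]/m_RR[\mathcal I]$ directly: to get $\ell(\mathcal I)=0$, verify the ideal-theoretic criterion of Lemma \ref{sp0}, namely that for every $n$ and every $f\in\mathfrak p^{(n)}$ some power $f^m$ lies in $m_R\mathfrak p^{(mn)}$, so that $\sqrt{m_RR[\mathcal I]}$ is the unique graded maximal ideal of $R[\mathcal I]$; to get $\ell(\mathcal I)=1$, exhibit a chain of two distinct homogeneous prime ideals of $T_{\mathcal I}$ while using the bound $\ell(\mathcal I)<2$ from Corollary \ref{CorAS4} to rule out $\ell(\mathcal I)=2$.

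The main obstacle is precisely the construction and analysis of the examples realizing $\ell(\mathcal I)=0$ and $1$: one needs a height two prime whose symbolic powers, read modulo $m_R$, are controlled tightly enough to pin down $\dim T_{\mathcal I}$, and here I would lean on the Section \ref{SecEx} constructions together with Lemma \ref{sp0} for the analytic-spread-zero verification. By contrast, the upper bound $\ell(\mathcal I)\le 2$ and the case $\ell(\mathcal I)=2$ are routine consequences of the results established above.
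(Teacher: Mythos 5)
Your proposal is correct and follows essentially the same route as the paper: the upper bound via the observation that $\mathcal I$ is a $1$-divisorial filtration together with Corollary \ref{CorAS4}, the value $2$ from a regular prime $\mathfrak p=(x,y)$, and the values $0$ and $1$ from the examples of Section \ref{SecEx} (Examples \ref{EXN} and \ref{EXN1}). Your fleshing out of the $\ell(\mathcal I)=2$ case and the remark that $\ell(\mathcal I)<2$ forces the symbolic algebra to be non-Noetherian are both accurate.
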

  
  \begin{proof} 
  The bound $\ell(\mathcal I)\le 2$ follows from Corollary \ref{CorAS4}. Examples \ref{EXN} and \ref{EXN1} have analytic spread 0 and 1 respectively. A prime ideal $\mathfrak p=(x,y)$ where $x,y$ are part of a regular system of parameters in $R$   gives an example with analytic spread 2.
  \end{proof}

 We have (by Lemma \ref{sp0}) the following ideal theoretic interpretation of analytic spread zero for a symbolic filtration $\mathcal I=\{\mathfrak p^{(n)}\}$. We have that 
    $$
    \ell(\mathcal I)=0\mbox{ if and only if }
          \mbox{ for all $n$ and     $f\in \mathfrak p^{(n)}$, there exists $m>0$ such that $f^m\in m_R\mathfrak p^{(mn)}$.}
  $$  
  
 In Theorem \ref{PropN51}, we necessarily have that the symbolic algebra is not finitely generated if $\ell(\mathcal I)<2$  (by Proposition \ref{PropAS11}). A simple example of a symbolic algebra achieving the maximum analytic spread   $\ell(\mathcal I)=2$   may be constructed by taking $\mathfrak p$ to be a regular prime ideal in $R$ ($\mathfrak p=(x,y)$ where $x,y,z$ is a regular system of parameters in $R$). We do not know of an example such that $\ell(\mathcal I)=2$ but the symbolic algebra is not finitely generated.
 
We  look  a little more closely at the most dramatic case of the theorem, when   $\ell(\mathcal I)=0$.
  By Proposition \ref{PropAS10}, we have that $\ell(\mathcal I_b)=3$ for all truncations $\mathcal I_b$ of $\mathcal I$.
  The analytic spread $\ell(\mathcal I)$ being zero has the following interpretation in the geometry of the canonical projection $\phi:\mbox{Proj}(R[\mathcal I])\rightarrow \mbox{Spec}(R)$.
  We have that  
 $\phi^{-1}(\mathfrak p)=\PP^1_{\kappa(\mathfrak p)}$, where $\kappa(\mathfrak p)=(R/\mathfrak p)_{\mathfrak p}$, since $\mbox{Proj}(\oplus_{n\ge 0}\mathfrak p_{\mathfrak p}^n)$ is the blow up of the maximal ideal $\mathfrak p_{\mathfrak p}$
 in the two dimensional regular local ring $R_{\mathfrak p}$, so that $\dim \phi^{-1}(\mathfrak p)=1$, but
 $\phi^{-1}(m_R)=\emptyset$
  since $\ell(\mathcal I)=0$. In particular, the theorem on upper semicontinuity of fiber dimension (\ref{eqN2}) for ideals  fails in this non Noetherian situation.  
  
  Theorem \ref{PropN51} shows that the inequality (\ref{eqN3}) for ideals fails for symbolic filtrations, as we see by taking $\mathfrak p$ in Theorem \ref{PropN51} such that $\ell(\mathcal I)<2$, so that $2=\mbox{ht}(\mathcal I)>\ell(\mathcal I)$.

 \section{Some Examples of Symbolic Algebras}\label{SecEx}  In this section we use famous examples by Nagata and Zariski to 
 compute the analytic spread of some space curve singularities and some related examples.


  \begin{Example}\label{EXN} Suppose that $a\ge 0$. Then there exists a prime ideal $Q$ of height $2+a$  in a regular local ring $A$ of dimension $3+a$ such that $\ell(\mathcal J)=a$, where $\mathcal J=\{Q^{(n)}\}$ is the 1-divisorial  filtration on $A$ of symbolic powers of $Q$.
  \end{Example} 
  

  We make use of a famous example of Nagata.
   Let $s$ be a positive integer with $s\ge 4$, and $r=s^2$. Let $\alpha_1,\ldots,\alpha_r\in \PP^2_{\CC}$ be independent generic points of $\PP^2$ over $\QQ$. 
  
Let $\mathcal I_{\alpha_i}$ be the ideal sheaf of $\alpha_i$ in $\PP^2$ and let $H'$ be a linear hyperplane section of $\PP^2$.

  The difficult statement of Theorem \ref{ThmN1} is proven by Nagata in \cite{N2} and in Proposition 1 of Chapter 3, page 18 of \cite{N1}.
  
  \begin{Theorem}\label{ThmN1}(Nagata) Let notation be as above. 
  \begin{enumerate}\item[1)] Suppose that $d,m_1,\ldots,m_r\in \NN$ and $H^0(\PP^2,\mathcal O_{\PP^2}(dH')\otimes \mathcal I_{\alpha_1}^{m_1}\otimes\cdots\otimes \mathcal I_{\alpha_r}^{m_r})\ne 0$. Then 
  $$
  d>\frac{1}{\sqrt{r}}\sum_{i=1}^rm_i.
  $$
  \item[2)] Suppose that $r'$ is a real number such that $r'>\sqrt{r}$. Then there exist $d,m\in \ZZ_{>0}$ such that $r'>\frac{d}{m}>\sqrt{r}$ and $H^0(\PP^2,\mathcal O_{\PP^2}(dH')\otimes \mathcal I_{\alpha_1}^{m}\otimes\cdots\otimes \mathcal I_{\alpha_r}^{m})\ne 0$.
    \end{enumerate}
  \end{Theorem}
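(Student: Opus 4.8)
The two parts are of very different character, so I would treat part (2) first, as it is an elementary parameter count. Fix a rational number $\rho$ with $\sqrt r<\rho<r'$ (possible since $\sqrt r<r'$), write $\rho=p/q$ with $p,q\in\ZZ_{>0}$, and for $k\in\ZZ_{>0}$ set $d=kp$ and $m=kq$, so that $d/m=\rho$. The space $H^0(\PP^2,\mathcal O_{\PP^2}(dH'))$ has dimension $\binom{d+2}{2}$, and imposing vanishing to order $\ge m$ at a fixed point cuts out at most $\binom{m+1}{2}$ linear conditions on it; hence
\[
\dim H^0\!\big(\PP^2,\mathcal O_{\PP^2}(dH')\otimes\mathcal I_{\alpha_1}^{m}\otimes\cdots\otimes\mathcal I_{\alpha_r}^{m}\big)\ \ge\ \binom{d+2}{2}-r\binom{m+1}{2}=\tfrac12\big(k^2(p^2-rq^2)+k(3p-rq)+2\big).
\]
Since $p^2-rq^2=q^2(\rho^2-r)$ is a positive integer, the right-hand side is positive once $k$ is large, and any such $k$ produces the desired nonzero section with $d/m=\rho\in(\sqrt r,r')$.

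Part (1) is the substantial assertion — it is Nagata's theorem — and here I would first extract the non-strict inequality by a specialization argument and then isolate the strict improvement as the hard core. Pass to the blow-up $\pi\colon X\to\PP^2$ at $\alpha_1,\dots,\alpha_r$, with $H=\pi^*(\text{line})$ and exceptional curves $E_1,\dots,E_r$; a nonzero section as in the hypothesis gives an effective divisor $D$ of class $dH-\sum m_iE_i$ on $X$. Now degenerate the configuration $(\alpha_i)$ to the $s^2=r$ (distinct) base points of a general pencil of curves of degree $s$: on the resulting $X$ the class $P=sH-\sum E_i$ is base-point free (the pencil resolves to a morphism $X\to\PP^1$ with fibre class $P$) and $P^2=0$, so $P$ is nef. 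Thus $0\le D\cdot P=sd-\sum m_i$ for this special configuration, and by upper semicontinuity of $h^0$ over the (irreducible) space of point configurations, the corresponding $H^0$ vanishes for general $\alpha_i$ as well whenever $sd<\sum m_i$; equivalently, if that $H^0$ is nonzero then $d\ge\frac1s\sum m_i$.

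The main obstacle is to promote $\ge$ to the strict inequality $d>\frac1s\sum m_i$ asserted by the theorem. Here I would follow Nagata's strategy: a descending induction on $d$ using quadratic Cremona transformations. Ordering $m_1\ge\cdots\ge m_r\ge 0$, if $m_1+m_2+m_3>d$ one applies the standard quadratic transformation centered at $\alpha_1,\alpha_2,\alpha_3$ — which preserves genericity of the configuration — to replace $D$ by an effective divisor of strictly smaller degree $d'=2d-(m_1+m_2+m_3)$; a short computation (which uses $s\ge 4$) shows the hypothesis $sd'\le\sum m_i'$ persists, so the inductive hypothesis applies. If instead $m_1+m_2+m_3\le d$ one is in the base case, which must be excluded outright; this includes the extremal configuration where all $m_i$ coincide and $d=ms$, and it is exactly this case that exploits the arithmetic of $r=s^2$ with $s\ge 4$. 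Carrying out the base case together with the bookkeeping of the Cremona step (effectivity and nonnegativity of the transformed multiplicities, a genuine drop of the induction quantity) is the heart of Nagata's proof in \cite{N2} and \cite[Chapter 3]{N1}, and I would invoke it rather than reprove it.
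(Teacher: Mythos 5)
The paper offers no proof of this theorem at all: it simply cites Nagata (\cite{N2} and Proposition 1 of Chapter 3 of \cite{N1}) for ``the difficult statement,'' so your proposal should be judged against that citation rather than against an argument in the text. Your treatment of part (2) is a complete and correct proof: the parameter count $\binom{d+2}{2}-r\binom{m+1}{2}=\tfrac12\bigl(k^2(p^2-rq^2)+k(3p-rq)+2\bigr)$ with $p^2-rq^2>0$ is exactly the standard (and essentially Nagata's) argument, and your reduction of the non-strict inequality in part (1) to nefness of $P=sH-\sum E_i$ on the degenerate configuration, via upper semicontinuity of $h^0$ in the flat family of point configurations, is sound. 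For the strict inequality --- the genuine content of part (1) --- you, like the paper, invoke Nagata's original proof, so there is no gap relative to what the paper does; I would only caution that your description of that proof as a descending induction via quadratic Cremona transformations is a plausible modern gloss but not a faithful account of Nagata's actual specialization argument, which matters little since you cite rather than reprove it. (One pedantic point on the statement itself, not your proof: as written, part (1) fails in the degenerate case $d=m_1=\cdots=m_r=0$, where $H^0\neq 0$ but the strict inequality reads $0>0$; the paper only ever applies it with $d>0$.)
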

  
  Let $\Lambda:X\rightarrow \PP^2$ be the blow up of the points $\alpha_1,\ldots,\alpha_r$ with exceptional lines $E_1,\ldots,E_r$.   
  Let $H=\Lambda^*(H')$. Since $\Lambda$ is the blowup of the points $\alpha_1,\ldots,\alpha_r$ on the nonsingular surface $\PP^2$, we have that for all $d,m_1,\ldots,m_r\ge 0$,
  $$
  H^0(X,\mathcal O_X(dH-m_1E_1-\cdots-m_rE_r))=H^0(\PP^2,\mathcal O_{\PP^2}(dH')\otimes \mathcal I_{\alpha_1}^{m_1}\otimes\cdots\otimes \mathcal I_{\alpha_r}^{m_r}).
  $$  
  Let $E=E_1+\cdots+E_r$. The canonical divisor $K_X$ on $X$ is $K_X=-3H+E$.

\begin{Lemma}\label{LemmaN0} Let notation be as above.
\begin{enumerate}
\item[1)] Let $C$ be an irreducible reduced curve on $X$ with $C\ne E_i$ for any $i$.  Then $C\sim dH-\sum m_iE_i$ for some $d,m_i\in \NN$ with $d>0$.
\item[2)] Let $dH-mE$ be a divisor with $d\ge -2$. Then $H^2(X,\mathcal O_X(dH-mE))=0$.
\item[3)] Let $L=dH-mE$ with $d,m\in \ZZ_{>0}$. Then $L$ is ample if $\frac{d}{m}>\sqrt{r}$.
\item[4)] Let $L=dH-mE$ with $d,m\in \ZZ_{>0}$. Then   $H^1(X,\mathcal O_X(dH-mE))=0$ if  $d>\sqrt{r}m+(\sqrt{r}-3)$.
\item[5)] Suppose that $d,m\ge 0$. Then
  $$
  H^0(X,\mathcal O_X(dH-mE))=H^0(X,\mathcal O_X(H))H^0(X,\mathcal O_X((d-1)H-mE))
  $$ 
  if $d\ge \sqrt{r}m+\sqrt{r}$.

\end{enumerate}
  \end{Lemma}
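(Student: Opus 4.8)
The plan is to prove the five parts in order, using each for the next. Everything takes place on the smooth projective rational surface $X$ over $\CC$, and I will use without comment the intersection numbers $H^2=1$, $H\cdot E_i=0$, $E_i\cdot E_j=-\delta_{ij}$, together with $K_X=-3H+E$. Part (1) is pure birational geometry: since $\Lambda$ contracts exactly $E_1\cup\cdots\cup E_r$ to points, an irreducible reduced curve $C$ with $C\ne E_i$ maps onto an irreducible plane curve $\Lambda(C)$ of some degree $d\ge 1$, and $C$ is the strict transform of $\Lambda(C)$. Setting $m_i=\operatorname{mult}_{\alpha_i}\Lambda(C)\in\NN$ we get $C=\Lambda^{*}(\Lambda(C))-\sum_i m_iE_i$, so $C\sim dH-\sum_i m_iE_i$ with $d>0$ and all $m_i\ge 0$.

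For (2) I would use Serre duality: $H^2(X,\mathcal O_X(dH-mE))$ is dual to $H^0(X,\mathcal O_X(K_X-dH+mE))=H^0(X,\mathcal O_X(-(d+3)H+(m+1)E))$. Writing $\mathcal O_X(-(d+3)H+(m+1)E)=\Lambda^{*}\mathcal O_{\PP^2}(-(d+3))\otimes\mathcal O_X((m+1)E)$ and using $\Lambda_{*}\mathcal O_X(kE)=\mathcal O_{\PP^2}$ for $k\ge 0$ (a rational function with poles only along the contracted divisor $E$ is constant), the projection formula identifies this $H^0$ with $H^0(\PP^2,\mathcal O_{\PP^2}(-(d+3)))$, which vanishes since $d+3\ge 1$.

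Part (3) I would get from the Nakai--Moishezon criterion: $L^2=d^2-rm^2>0$ because $d/m>\sqrt r$, and $L\cdot E_i=m>0$; for every other irreducible curve $C$, part (1) gives $C\sim d'H-\sum m_i'E_i$ with $d'>0$ and $m_i'\ge 0$, and applying Nagata's bound Theorem~\ref{ThmN1}(1) to the nonzero section of $\mathcal O_X(d'H-\sum m_i'E_i)$ cutting out $C$ gives $\sum m_i'<\sqrt r\,d'$, so $L\cdot C=dd'-m\sum m_i'>d'(d-\sqrt r\,m)>0$. Then (4) follows from Kodaira vanishing once we write $dH-mE=K_X+A$ with $A=(d+3)H-(m+1)E$: the hypothesis $d>\sqrt r\,m+(\sqrt r-3)$ is exactly $(d+3)/(m+1)>\sqrt r$, so $A$ is ample by (3) and $H^1(X,\mathcal O_X(dH-mE))=H^1(X,\mathcal O_X(K_X+A))=0$.

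Part (5), the surjectivity of $\mu\colon H^0(X,\mathcal O_X(H))\otimes H^0(X,\mathcal O_X((d-1)H-mE))\to H^0(X,\mathcal O_X(dH-mE))$, is the main obstacle. I would fix a general line $L'\subset\PP^2$ missing all $\alpha_i$, with total transform $\tilde L\in|H|$; then $\tilde L$ is disjoint from $E$, so $(dH-mE)|_{\tilde L}\cong\mathcal O_{\PP^1}(d)$. Tensoring $0\to\mathcal O_X(-H)\to\mathcal O_X\to\mathcal O_{\tilde L}\to 0$ with $\mathcal O_X(dH-mE)$ and using $H^1(X,\mathcal O_X((d-1)H-mE))=0$ from (4) (valid since $d\ge\sqrt r\,m+\sqrt r$), the restriction $\rho\colon H^0(X,\mathcal O_X(dH-mE))\to H^0(\PP^1,\mathcal O_{\PP^1}(d))$ is surjective with kernel $x\cdot H^0(X,\mathcal O_X((d-1)H-mE))$, where $x\in H^0(X,\mathcal O_X(H))$ is the form cutting out $\tilde L$. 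Now $\rho\circ\mu$ factors as $H^0(X,\mathcal O_X(H))\otimes H^0(X,\mathcal O_X((d-1)H-mE))\to H^0(\PP^1,\mathcal O_{\PP^1}(1))\otimes H^0(\PP^1,\mathcal O_{\PP^1}(d-1))\to H^0(\PP^1,\mathcal O_{\PP^1}(d))$, where the first arrow is surjective because restriction of linear forms to a line is surjective and, by the same exact-sequence argument one step lower (using $H^1(X,\mathcal O_X((d-2)H-mE))=0$), $H^0(X,\mathcal O_X((d-1)H-mE))\to H^0(\PP^1,\mathcal O_{\PP^1}(d-1))$ is surjective, while the second arrow is surjective by projective normality of $\PP^1$. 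Hence $\rho\circ\mu$ is onto: given $s$, choose $t\in\operatorname{im}\mu$ with $\rho(t)=\rho(s)$, so $s-t\in\ker\rho=x\cdot H^0(X,\mathcal O_X((d-1)H-mE))\subseteq\operatorname{im}\mu$ and $s\in\operatorname{im}\mu$. The one point to watch is the edge case $m=0$, where (4) as literally stated does not apply; there the statement of (5) is just projective normality of $\PP^2$ and can be cited directly.
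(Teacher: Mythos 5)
Your proposal is correct, and for parts 1)--4) it is essentially the paper's argument in light disguise. For 1) the paper argues with intersection numbers ($d=(H\cdot C)\ge 0$ since $|H|$ is base point free, $m_i=(E_i\cdot C)\ge 0$, and $d=0$ is excluded by intersecting with an ample $eH-E$), whereas you use the strict-transform description $C=\Lambda^*\Lambda(C)-\sum m_iE_i$; these are equivalent. For 2) the paper also applies Serre duality but then kills $H^0(X,\mathcal O_X(-(d+3)H+(m+1)E))$ by noting that $|H|$ is base point free and meets this divisor class negatively; your pushforward/projection-formula argument reaches the same conclusion, with the tiny caveat that $\Lambda_*\mathcal O_X(kE)=\mathcal O_{\PP^2}$ needs $k\ge 0$ (for $k<0$ the pushforward is an ideal sheaf and the vanishing still holds, and in any case only $m\ge 0$ is ever used). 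Parts 3) and 4) coincide with the paper: Nakai--Moishezon plus Nagata's bound, then Kodaira vanishing applied to $(dH-mE)-K_X=(d+3)H-(m+1)E$. The genuine difference is in 5): the paper observes that 2) and 4) make $\mathcal O_X(-mE)$ Castelnuovo--Mumford $d$-regular and then cites Mumford's multiplication theorem, while you prove the needed surjectivity directly by restricting to the total transform of a general line avoiding the $\alpha_i$ and chasing the two short exact sequences whose $H^1$-obstructions vanish by 4). Your route is self-contained (it is, in effect, the standard proof of the regularity multiplication theorem specialized to this situation) at the cost of a longer diagram chase, and your handling of the $m=0$ edge case, where 4) does not literally apply but the statement reduces to $S_d=S_1\cdot S_{d-1}$ on $\PP^2$, is appropriate.
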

  
  \begin{proof} 1). $C\sim dH-\sum m_iE_i$ for some $d,m_i\in \ZZ$. We have $(H\cdot C)=d\ge 0$ since the complete linear system $|H|$ is base point free.  Further, $(E_i\cdot C)=m_i\ge 0$ for $1\le i\le r$. There exists $e>0$ such that $eH-E$ is ample. 
  If $d=0$, so that $C\sim \sum -m_iE_i$ with some $m_i>0$, then $((eH-E)\cdot C)=-\sum m_i<0$, which is impossible. 
  
  2). By Serre duality
  $$
  H^2(X,\mathcal O_X(dH-mE))\cong H^0(X,\mathcal O_X(mE-dH+K_X)=H^0(X,\mathcal O_X(-(d+3)H+(m+1)E)).
  $$
   The complete linear system $|H|$ is base point free on $X$ and 
   $$
   (H\cdot(mE-dH+K_X))=-(d+3)<0\mbox{ for }d\ge -2,
   $$
    so
    $H^0(X,\mathcal O_X(mE-dH+K_X)=0$.    
    
    3). Suppose that $C$ is an irreducible reduced curve on $X$. 
  Then  by 1) and Theorem \ref{ThmN1}, $C$ is linear equivalent to $eH-\sum n_iE_i$ with $e,n_1,\ldots,n_r\in \NN$ and  $e>\frac{1}{\sqrt{r}}\sum_{i=1}^rn_i$. Hence $(C\cdot L)=de-m\sum_{i=1}^rn_i>0$. Further $(L^2)=d^2-m^2r>0$, so $L$ is ample by the Nakai Moishezon criterion (\cite[Theorem V.1.10]{Ha}).
  
  4). The  divisor 
  $(dH-mE)-K_X$ is ample if $d>\sqrt{r}m+(\sqrt{r}-3)$ by 3).
  Thus, by the Kodaira vanishing theorem (\cite[Remark III.7.15]{Ha}), 
  $$
  H^1(X,\mathcal O_X(dH-mE))=H^1(X,\mathcal O_X((dH-mE-K_X)+K_X))=0
  $$ 
   if  $d>\sqrt{r}m+(\sqrt{r}-3)$.
  
  5). The statements 2) and 4) imply that $\mathcal O_X(-mE)$ is $d$-regular if $d>\sqrt{r}m+(\sqrt{r}-2)$; that is, $H^i(X,\mathcal O_X(-mE)\otimes \mathcal O_X((d-i)H))=0$ for $i=1,2$. Thus the conclusions of 5) hold by page 99 \cite{M} (also proven in \cite[Theorem 17.35]{AG}).   
    \end{proof}

  Let $S=\CC[x_1,x_2,x_3]$ be the homogeneous coordinate ring of $\PP^2$ and $\mathfrak m$ be the graded maximal ideal of $S$. Let $P_i$ be the height two prime ideal in $S$ of the point $\alpha_i$ for $1\leq i\leq r$.   
  Then   $$
  S=\oplus_{d\ge 0}H^0(\PP^2,\mathcal O_{\PP^2}(dH'))=\oplus_{d\ge 0}H^0(X,\mathcal O_X(dH))
  $$
  and
   $$
   \mathfrak m=\oplus_{d>0}H^0(\PP^2,\mathcal O_{\PP^2}(dH'))=\oplus_{d> 0}H^0(X,\mathcal O_X(dH)).
   $$
 For $d,n_1,\ldots,n_r\in \NN$,
  $$
  \begin{array}{l}
  H^0(\PP^2,\mathcal O_{\PP^2}(dH')\otimes\mathcal I_{\alpha_1}^{n_1}\otimes\cdots\otimes \mathcal I_{\alpha_r}^{n_r})\\
  =\{F\in S\mid \mbox{$F$ is homogeneous of degree $d$ and $F\in P_1^{n_1}\cap\cdots\cap P_r^{n_r}$}\}
  \end{array}
  $$
  and
 $$
  \begin{array}{lll}
  P_1^{n_1}\cap\cdots \cap P_r^{n_r}&=&\bigoplus_{d>0}H^0(\PP^2,\mathcal O_{\PP^2}(dH')\otimes \mathcal I_{\alpha_1}^{n_1}\otimes\cdots\otimes\mathcal I_{\alpha_r}^{n_r})\\
  &=&\bigoplus_{d>0}H^0(X,\mathcal O_X(dH-n_1E_1-\cdots-n_rE_r).  
  \end{array}
  $$
  In particular,
  $$
  P_1^{n}\cap\cdots \cap P_r^{n}=  \bigoplus_{d>0}H^0(X,\mathcal O_X(dH-nE)).
  $$  
  Recall that $s=\sqrt{r}\in \ZZ_+$ (with $s\ge 4$).
  
  \begin{Proposition}\label{PropN5} Given $n\in \ZZ_{>0}$,
     $$
  \left(P_1^{n}\cap\cdots\cap P_r^{n}\right)^{s}\subset \mathfrak m\left(P_1^{sn}\cap\cdots\cap P_r^{sn}\right).
  $$
  \end{Proposition}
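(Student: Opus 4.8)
The plan is to pass to the blown‑up surface $X$ and rephrase everything in terms of global sections of the line bundles $\mathcal O_X(dH-mE)$, using the identifications recorded just above. Since $P_1^n\cap\cdots\cap P_r^n$ is a homogeneous ideal, its $s$‑th power is spanned over $\CC$ by products $F_1\cdots F_s$ of homogeneous elements $F_i\in P_1^n\cap\cdots\cap P_r^n$, and since $\mathfrak m(P_1^{sn}\cap\cdots\cap P_r^{sn})$ is an ideal it suffices to show that each such product lies in it. The case where some $F_i=0$ is trivial, so assume all $F_i\neq 0$ and write $d_i$ for the degree of $F_i$.

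First I would bound the $d_i$ from below. Since $0\neq F_i\in P_1^n\cap\cdots\cap P_r^n$ is homogeneous of degree $d_i$, we have $H^0(\PP^2,\mathcal O_{\PP^2}(d_iH')\otimes\mathcal I_{\alpha_1}^{n}\otimes\cdots\otimes\mathcal I_{\alpha_r}^{n})\neq 0$, so Theorem \ref{ThmN1}(1) yields $d_i>\frac{1}{\sqrt r}\cdot rn=\sqrt r\,n=sn$; as $d_i\in\ZZ$, this forces $d_i\ge sn+1$. Setting $d=d_1+\cdots+d_s$, the product $F_1\cdots F_s$ lies in $P_j^{sn}$ for every $j$ and is homogeneous of degree $d$, hence $F_1\cdots F_s\in H^0(X,\mathcal O_X(dH-snE))$; and, crucially, $d\ge s(sn+1)=s^2n+s=\sqrt r\,(sn)+\sqrt r$.

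Now I would invoke Lemma \ref{LemmaN0}(5) with $m=sn$: the bound $d\ge\sqrt r\,(sn)+\sqrt r$ is precisely its hypothesis, so
$$
H^0(X,\mathcal O_X(dH-snE))=H^0(X,\mathcal O_X(H))\cdot H^0(X,\mathcal O_X((d-1)H-snE)).
$$
Here $H^0(X,\mathcal O_X(H))=H^0(\PP^2,\mathcal O_{\PP^2}(H'))$ is the degree‑one graded piece of $S$, hence is contained in $\mathfrak m$, while $H^0(X,\mathcal O_X((d-1)H-snE))$ is the degree‑$(d-1)$ graded piece of $P_1^{sn}\cap\cdots\cap P_r^{sn}$. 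Therefore the right‑hand side of the displayed identity lies in $\mathfrak m(P_1^{sn}\cap\cdots\cap P_r^{sn})$, and so does $F_1\cdots F_s$, which proves the proposition.

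The one step carrying all the content, and the place I expect care is needed, is the exact matching of the two thresholds: Nagata's inequality forces each factor to have degree at least $sn+1$, so a product of $s$ factors has degree at least $s^2n+s$, and this is exactly the degree from which Lemma \ref{LemmaN0}(5) permits peeling off a linear form. Everything else is routine bookkeeping in the dictionary between homogeneous ideals of $S$, ideal sheaves on $\PP^2$, and line bundles on $X$, which is already set up in the preceding paragraphs.
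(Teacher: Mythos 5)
Your proof is correct and follows essentially the same route as the paper: Nagata's inequality forces each homogeneous factor to have degree at least $sn+1$, so the product has degree at least $\sqrt{r}(sn)+\sqrt{r}$, and then part 5) of Lemma \ref{LemmaN0} peels off the linear form into $\mathfrak m$. The only (harmless) difference is that the paper reduces to pure $s$-th powers $f^s$ of homogeneous elements, whereas you work directly with general products $F_1\cdots F_s$; your version avoids having to justify that reduction.
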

  
  \begin{proof} Since $S$ is graded and Noetherian,
    it suffices to show that if $0\ne f\in H^0(X,\mathcal O_X(dH-nE))$, then $f^s\in H^0(X,\mathcal O_X(H))H^0(X,\mathcal O_X(sd-1)H-snE)$. We have that $d>sn$ by Theorem \ref{ThmN1}. The statement then 
  follows from 5) of Lemma \ref{LemmaN0}, since 
  $f^s\in H^0(X,\mathcal O_X(sdH-snE))$ and 
  $sd\ge s(ns+1)= s(sn)+s$.
  \end{proof} 
  
  Let $R=S_{\mathfrak m}$, a three dimensional regular local ring, with maximal ideal $m_R=\mathfrak mS_{\mathfrak m}$.
  Let $\mathfrak p_i=(P_i)_{\mathfrak m}$ for $1\le i\le r$. The ideals $\mathfrak p_i$ are height two prime ideals in $R$.
  Let $\mathcal I=\{I_n\}$ where $I_n=\mathfrak p_1^{n}\cap \cdots\cap\mathfrak p_r^{n}$. The filtration $\mathcal I$ is the 1-divisorial  filtration on $R$, consisting of the symbolic powers  $I_n=I^{(n)}$ of $I=I_1$. Thus $\mbox{ht}(\mathcal I)=\mbox{ht}(I)=2$.

  By Proposition \ref{PropN5}, we have that for all $n>0$,
  \begin{equation}\label{eqN7}
 (I^{(n)})^s= \left(\mfp_1^{n}\cap\cdots\cap \mfp_r^{n}\right)^{s}\subset  m_R\left(\mfp_1^{sn}\cap\cdots\cap \mfp_r^{sn}\right)=m_RI^{(sn)}.
  \end{equation}  
  
  

  We will first construct the example when $a=0$. In \cite{Ro}, a height two prime ideal 
  $\mathfrak p$ in $R=\CC[x_1,x_2,x_3]_{(x_1,x_2,x_3)}$ and a continuous $\CC$-algebra  isomorphism $\phi:\hat R\rightarrow \hat R$ such that 
  $\phi(\widehat{I^{(n)}})=\widehat{\mfp^{(n)}}$ for all $n\ge 0$ are constructed, where $I$ is the ideal defined before (\ref{eqN7}).
  Let $s=\sqrt{r}\in \ZZ_{>0}$ be the integer defined before Theorem \ref{ThmN1}.
  By (\ref{eqN7}), $(I^{(n)})^s\subset m_RI^{(ns)}$ for all $n> 0$. Thus $(\widehat{I^{(n)}})^s\subset m_{\hat R}\widehat{I^{(ns)}}$, so applying $\phi$, we obtain
  $$
  \widehat{(\mfp^{(n)})^s}=(\widehat{\mfp^{(n)}})^s\subset m_{\hat R}\widehat{\mfp^{(ns)}}=\widehat{m_R\mfp^{(ns)}}.
  $$
  Since $R\rightarrow\hat R$ is faithfully flat, we obtain
   that  for all $n> 0$, we have that
  \begin{equation}\label{eqAS20} 
 \left( \mfp^{(n)}\right)^s\subset m_R\mfp^{(sn)}.
 \end{equation}
 Let $\nu$ be the $\mfp_{\mfp}$-adic valuation of $R_{\mfp}$, which is the discrete valuation of $K:=\CC(x_1,x_2,x_3)$ such that 
 the valuation ideals of $\nu$ in $R$ are $I(\nu)_n=\mfp^{(n)}$ for all $n\ge 0$.
 
 Let $A=\CC[x_1,x_2,x_3,y_1,\ldots,y_a]_{(x_1,x_2,x_3,y_1,\ldots,y_a)}$, the polynomial ring over $\CC$ in the variables $x_1,x_2,x_3,y_1,\ldots,y_a$,
 and $Q=\mfp A+(y_1,\ldots,y_a)$,  a prime ideal of height $2+a$ in $A$. Let $\omega$ be the Gauss valuation of $L:=\CC(x_1,x_2,x_3,y_1,\ldots,y_a)$, defined by 
 $$
 \omega(f)=\min\{\nu(b_{i_1,\ldots,i_a})+i_1+\cdots+i_a\}
 $$
 if $f=\sum b_{i_1,\ldots,i_a}y_1^{i_1}\cdots y_a^{i_a}\in K[y_1,\ldots,y_a]$ with $b_{i_1,\ldots,i_a}\in K$ for all $i_1,\ldots,i_a$. The valuation $\omega$ is a discrete valuation of $L$ which dominates $A_Q$. Since 
 $$
 A_Q=\left(R_{\mfp}[y_1,\ldots,y_a]\right)_{QR_{\mfp}[y_1,\ldots,y_a]},
 $$  
  we have that $\omega$ is the $Q$-adic valuation of $A_{Q}$. Thus the valuation ideals of $\omega$ in $A$ are $I(\omega)_n=Q^{(n)}$ for $n\ge 0$. Let $\mathcal J=\{Q^{(n)}\}$. The filtration $\mathcal J$ is a 1-divisorial filtration on $A$.
  
  \begin{Proposition}\label{PropAS22} Let $N=\sqrt{m_AA[\mathcal J]}$ be the radical of $m_AA[\mathcal J]$ in $A[\mathcal J]$. Let $\overline y_1,\ldots \overline y_a$ be the respective classes of $y_1t,\ldots,y_at$ in $A[\mathcal J]/N$. Then 
  $$
  A[\mathcal J]/N=\CC[\overline y_1,\ldots,\overline y_a]
  $$
  is a standard graded polynomial ring in the variables $\overline y_1,\ldots,\overline y_a$ over $\CC$.
  \end{Proposition}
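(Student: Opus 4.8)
The plan is to construct an explicit graded $\CC$-algebra homomorphism $\Phi\colon A[\mathcal J]\to P:=\CC[z_1,\ldots,z_a]$ onto a standard graded polynomial ring, with $\Phi(y_it)=z_i$, and to prove $\ker\Phi=N$; the proposition follows immediately, the class $\overline y_i$ corresponding to $z_i$.

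The first step is to describe $Q^{(n)}=I(\omega)_n$ concretely. Since $\omega$ is given by the Gauss formula and is the $Q$-adic valuation of the regular local ring $A_Q$, whose maximal ideal is $(\mfp R_\mfp+(y_1,\ldots,y_a))A_Q$, I would check that
$$
Q^{(n)}=\sum_{j=0}^{n}\mfp^{(j)}A\cdot(y_1,\ldots,y_a)^{\,n-j}
$$
(with the conventions $\mfp^{(0)}=R$, $(y_1,\ldots,y_a)^{0}=A$); only the inclusion $\subseteq$ is needed below. Work in $\hat A=\CC[[x_1,x_2,x_3,y_1,\ldots,y_a]]$, and for $f\in A$ let $e_\gamma(f)\in\CC$ denote the coefficient of $y^\gamma$ in the image of $f$ under $\hat A\to\hat A/(x_1,x_2,x_3)=\CC[[y_1,\ldots,y_a]]$. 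Because $\mfp\subseteq m_R$, in the displayed sum every term with $j\ge1$ lies in $(x_1,x_2,x_3)A$ and the $j=0$ term lies in $(y_1,\ldots,y_a)^nA$; hence $e_\gamma(f)=0$ whenever $f\in Q^{(n)}$ and $|\gamma|<n$. I then define $\Phi$ on the degree-$n$ component $Q^{(n)}t^n$ by $\Phi(ft^n)=\sum_{|\gamma|=n}e_\gamma(f)\,z^\gamma$.

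Next I would verify that $\Phi$ is a graded $\CC$-algebra homomorphism: additivity is clear, and for $f\in Q^{(n)}$, $g\in Q^{(m)}$ one has $e_\epsilon(fg)=\sum_{\gamma+\gamma'=\epsilon}e_\gamma(f)e_{\gamma'}(g)$, where for $|\epsilon|=n+m$ every term with $|\gamma|\ne n$ (equivalently $|\gamma'|\ne m$) vanishes by the preceding remark, so $\Phi(fg\,t^{n+m})=\Phi(ft^n)\Phi(gt^m)$; surjectivity is immediate from $\Phi(y_it)=z_i$. For $\ker\Phi=N$: on $A[\mathcal J]_0=A$ the map $\Phi$ is the residue map $A\to A/m_A=\CC$, so $m_AA[\mathcal J]\subseteq\ker\Phi$ and, $P$ being reduced, $N=\sqrt{m_AA[\mathcal J]}\subseteq\ker\Phi$. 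For the reverse inclusion, by (\ref{eqAS20}) each generator $g\,y^\gamma$ of $\sum_{j\ge1}\mfp^{(j)}A(y_1,\ldots,y_a)^{n-j}$ with $g\in\mfp^{(j)}$, $j\ge1$, $|\gamma|=n-j$, satisfies $(g\,y^\gamma t^n)^s\in m_R\mfp^{(sj)}A(y_1,\ldots,y_a)^{s(n-j)}t^{sn}\subseteq m_AQ^{(sn)}t^{sn}$, so $\Big(\sum_{j\ge1}\mfp^{(j)}A(y_1,\ldots,y_a)^{n-j}\Big)t^n\subseteq N$; combined with the description of $Q^{(n)}$ this yields, for any $ft^n$, a congruence $ft^n\equiv ut^n\pmod N$ with $u\in(y_1,\ldots,y_a)^n$. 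If in addition $\Phi(ft^n)=0$, then $\Phi(ut^n)=0$, i.e.\ $e_\gamma(u)=0$ for all $|\gamma|=n$; since the images of the $y^\gamma$ with $|\gamma|=n$ form a $\CC$-basis of $(y_1,\ldots,y_a)^nA/m_A(y_1,\ldots,y_a)^nA$ and $e_\gamma$ records precisely these coordinates, $u\in m_A(y_1,\ldots,y_a)^nA\subseteq m_AQ^{(n)}$, hence $ut^n\in m_AA[\mathcal J]\subseteq N$ and $ft^n\in N$. Therefore $\Phi$ induces an isomorphism $A[\mathcal J]/N\cong\CC[z_1,\ldots,z_a]$.

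The hard part is essentially only the first step: making the decomposition of $Q^{(n)}$ and the coefficient functionals $e_\gamma$ precise, including the passage to the completion $\hat A$ and the handling of non-polynomial elements of $A$. Once that is set up, the two essential properties of $\mfp$ enter cleanly --- $\mfp\subseteq m_R$ gives the vanishing $e_\gamma(f)=0$ for $|\gamma|<n$ (which makes $\Phi$ well defined and multiplicative), and the Roberts-type inequality $(\mfp^{(j)})^s\subseteq m_R\mfp^{(sj)}$ of (\ref{eqAS20}) forces the entire ``$\mfp$-part'' of $Q^{(n)}$ into $N$ --- and everything else is routine commutative algebra in the regular local ring $A=\CC[x_1,x_2,x_3,y_1,\ldots,y_a]_{(x_1,x_2,x_3,y_1,\ldots,y_a)}$.
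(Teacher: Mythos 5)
Your argument is correct, and it shares its first half with the paper's proof while replacing the second half by a genuinely different mechanism. Like the paper, you start from the decomposition $Q^{(n)}=\sum_{j=0}^{n}\mfp^{(j)}A\,(y_1,\ldots,y_a)^{n-j}$ (which the paper also asserts without proof; your sketch via the Gauss formula is the right justification, and you correctly note that only the inclusion $\subseteq$ is needed), and you use (\ref{eqAS20}) in exactly the same way to push every term with $j\ge 1$ into $N$. Where you diverge is in proving that the $\overline y_i$ satisfy no relations: the paper takes a putative relation $G=\sum\lambda_\gamma y^\gamma\in N$, deduces $G^m\in m_AQ^{(mn)}$, and derives a contradiction by comparing $m_A$-adic orders --- an argument that implicitly relies on the estimate $\mfp^{(j)}\subseteq m_R^{\,j}$ (valid in the Roberts example, since elements of $\mfp^{(j)}$ come from forms of degree greater than $\sqrt{r}\,j$) to guarantee that every element of $m_AQ^{(mn)}$ has order at least $mn+1$. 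You instead build the surjection $\Phi$ onto $\CC[z_1,\ldots,z_a]$ out of Taylor coefficients modulo $(x_1,x_2,x_3)$ and prove $\ker\Phi=N$; the only property of $\mfp$ this needs is $\mfp\subseteq m_R$, so your route is marginally more robust and delivers the isomorphism with the polynomial ring in one stroke, at the cost of setting up the completion, the functionals $e_\gamma$, and the linear-algebra check that $e_\gamma(u)=0$ for all $|\gamma|=n$ forces $u\in m_A(y_1,\ldots,y_a)^nA$. The individual steps you list (vanishing of $e_\gamma(f)$ for $f\in Q^{(n)}$ and $|\gamma|<n$, multiplicativity of $\Phi$, the inclusion $N\subseteq\ker\Phi$ from reducedness of the target, and the reduction $ft^n\equiv ut^n\pmod N$) all check out.
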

  
  \begin{proof} For $n> 0$,
  $$
  Q^{(n)}=I(\omega)_n=(y_1,\ldots,y_a)^n+(y_1,\ldots,y_a)^{n-1}\mfp+(y_1,\ldots,y_a)^{n-2}\mfp^{(2)}+\cdots+\mfp^{(n)}A.
  $$
  We first show that $A[\mathcal J]/N$ is generated by $\overline y_1,\ldots,\overline y_a$ as a $\CC$-algebra. Since $N$ is graded, it suffices to show that if $f\in \mfp^{(d)}$ with $d>0$ and $y_1^{i_1}\cdots y_a^{i_a}$ is such that $i_1,\ldots,i_a\in \NN$ and $i_1+\cdots+i_a=n-d$, then 
  $(y_1^{i_1}\cdots y_a^{i_a})^sf^s\in m_AQ^{(sn)}$. Since $(i_1+\cdots+i_a)s=(n-d)s$, it suffices to show that $f^s\in m_R\mfp^{(sd)}$, which follows from (\ref{eqAS20}).
  
 We now show that the standard graded $\CC$-algebra $\CC[\overline y_1,\ldots,\overline y_a]$ is a polynomial ring over $\CC$. Suppose otherwise. We will find a contradiction. Then  for some $n>0$, there is a relation
 $$
 \sum_{i_1+\cdots+i_a=n}\lambda_{i_1,\ldots,i_a}\overline y_1^{i_1}\cdots \overline y_a^{i_1}=0
 $$
 for some $\lambda_{i_1,\ldots,i_a}\in \CC$ not all zero. Let $G=\sum_{i_1+\cdots+i_a=n}\lambda_{i_1,\ldots,i_a}y_1^{i_1}\cdots y_a^{i_a}\in Q^{(n)}$. Since $G\in N$, there exists $m>0$ such that $G^m\in m_AQ^{(mn)}$. Now $G^m$ has $m_A$-order $mn$, and every element of $m_AQ^{(mn)}$ has $m_A$-order $\ge mn+1$. Thus $G=0$, a contradiction to the assumption that some $\lambda_{i_1,\ldots,i_a}\ne 0$.
  \end{proof}
  Example \ref{EXN} thus has analytic spread $\ell(\mathcal J)=a$.


  \begin{Example}\label{EXN1} There exists a prime ideal $\mathfrak p$ of height $2$  in a regular local ring $R$ of dimension $3$ such that $\ell(\mathcal J)=1$, where $\mathcal J=\{\mathfrak p^{(n)}\}$ is the 1-divisorial  filtration on $R$ of symbolic powers of $\mathfrak p$.
  \end{Example} 
  
  
  

  We make use of a famous example of Zariski \cite{Z1},  expositions of which can be found in \cite[Section 2.3]{La} and \cite[Theorem 20.14]{AG}.
   Let $\alpha_1,\ldots,\alpha_{12}\in \PP^2_{\CC}$ be independent generic points of an elliptic curve $E'$ of $\PP^2$ over $\QQ$. 
   The curve $E'$ is defined by the vanishing of an irreducible  cubic form $G\in \CC[x_1,x_2,x_3]$. 
  
Let $\mathcal I_{\alpha_i}$ be the ideal sheaf of $\alpha_i$ in $\PP^2$ and let $H'$ be a linear hyperplane section of $\PP^2$.

  Let $\Lambda:X\rightarrow \PP^2$ be the blow up of the points $\alpha_1,\ldots,\alpha_{12}$ with exceptional lines $F_1,\ldots,F_{12}$.   
  Let $H=\Lambda^*(H')$. Since $\Lambda$ is the blowup of the points $\alpha_1,\ldots,\alpha_{12}$ on the nonsingular surface $\PP^2$, we have that for all $d,m_1,\ldots,m_r\ge 0$,
  $$
  H^0(X,\mathcal O_X(dH-m_1F_1-\cdots-m_{12}F_{12}))=H^0(\PP^2,\mathcal O_{\PP^2}(dH')\otimes \mathcal I_{\alpha_1}^{m_1}\otimes\cdots\otimes \mathcal I_{\alpha_r}^{m_r}).
  $$  
  Let $F=F_1+\cdots+F_{12}$. The canonical divisor $K_X$ on $X$ is $K_X=-3H+F$. Let $E$ be the strict transform of  $E'$ on $X$. We have that $\Lambda^*(E')=E+F$, where $E$ is an elliptic curve on $X$ which is isomorphic to $E'$, $(E\cdot E)=-3$ and 
  $\mathcal O_X(H+E)\otimes \mathcal O_E$ is a degree 0 invertible sheaf on $X$ of infinite order, so that
  \begin{equation}\label{eqN12}
  H^0(E,\mathcal O_X(m(H+E))\otimes \mathcal O_E)=0\mbox{ for all nonzero integers $m$.}
  \end{equation}
    Further, $(F\cdot F)=-12$.

\begin{Lemma}\label{LemmaN10} Let notation be as above.
\begin{enumerate}
\item[1)] Let $C$ be an irreducible reduced curve on $X$ with $C\ne F_i$ for any $i$.  Then $C\sim dH-\sum m_iF_i$ for some $d,m_i\in \NN$ with $d>0$.
\item[2)] Let $dH-mF$ be a divisor with $d\ge -2$. Then $H^2(X,\mathcal O_X(dH-mF))=0$.
\item[3)] Let $L=dH-mF$ with $m\in \ZZ_{>0}$ and $d>4m$. Then $L$ is ample.
\item[4)] Let $\sigma\in H^0(X,\mathcal O_X(3H-F))$ be the section whose divisor is $E$. Then 
$$
\mathcal O_X(3H-F)=\sigma\mathcal O_X
$$
 and $H^0(X,\mathcal O_X(3H-F))=\sigma\CC$. Further, 
 $$
 H^0(X,\mathcal O_X(3mH-mF))=H^0(X,\mathcal O_X(3H-F))^m
 $$
  for all $m\in \ZZ_{>0}$.
  \item[5)] Suppose that $d\ge 0$, $m\ge 0$ and $d<3m$. then $H^0(X,\mathcal O_X(dH-mF))=0$.
  \item[6)] Let $L=dH-mF$ with $d,m\in \ZZ_{>0}$. Then   $H^1(X,\mathcal O_X(dH-mF))=0$ if  $d>4m+1$.
  \item[7)] Suppose that $d,m\ge 0$. Then
  $$
  H^0(X,\mathcal O_X(dH-mF))=H^0(X,\mathcal O_X(H))H^0(X,\mathcal O_X((d-1)H-mF))
  $$
  if $d\ge 4m+4$.

\end{enumerate}
  \end{Lemma}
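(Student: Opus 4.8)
The plan is to run the arguments of Lemma~\ref{LemmaN0}, with the role that Nagata's inequality plays there now played by intersection theory against the elliptic curve $E$. The numerical input I will use throughout is $E\sim 3H-F$ (from $\Lambda^*(E')=E+F$ and $E'\sim 3H'$) together with $(H\cdot H)=1$, $(H\cdot F_i)=0$, $(F_i\cdot F_j)=-\delta_{ij}$, which give $(E\cdot E)=-3$ and $(H\cdot E)=3$; that $|H|$ is base point free (so $H$ is nef); and that $X$ is a smooth projective surface over $\CC$ with $\omega_X=\mathcal O_X(K_X)$, $K_X=-3H+F$, so that Serre duality and Kodaira vanishing apply.

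Parts 1), 2) and 6) go exactly as their counterparts in Lemma~\ref{LemmaN0}. For 1), write $C\sim dH-\sum m_iF_i$: then $d=(H\cdot C)\ge 0$ as $H$ is nef and globally generated, $m_i=(F_i\cdot C)\ge 0$ as $C\ne F_i$, and $d\ne 0$ because, fixing $e$ with $eH-F$ ample, the value $((eH-F)\cdot C)$ is positive, whereas $C\sim -\sum m_iF_i$ with some $m_i>0$ would make it $-\sum m_i<0$. For 2), Serre duality gives $H^2(X,\mathcal O_X(dH-mF))\cong H^0(X,\mathcal O_X(-(d+3)H+(m+1)F))^{*}$, which is $0$ for $d\ge -2$ because an effective divisor in that class would meet the base point free class $H$ in degree $-(d+3)<0$. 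For 6), $(dH-mF)-K_X=(d+3)H-(m+1)F$ is ample once $d+3>4(m+1)$, i.e.\ $d>4m+1$, so Kodaira vanishing gives $H^1(X,\mathcal O_X(dH-mF))=0$.

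For 3), I would verify the Nakai--Moishezon criterion for $L=dH-mF$ with $d>4m\ge 4$: $(L^2)=d^2-12m^2>0$ and $(L\cdot F_i)=m>0$ are immediate, $(L\cdot E)=3d-12m>0$, and for an irreducible curve $C$ distinct from every $F_i$ and from $E$, part 1) gives $C\sim eH-\sum n_iF_i$ with $e>0$, $n_i\ge 0$, while $(C\cdot E)=3e-\sum n_i\ge 0$ forces $\sum n_i\le 3e$, so $(L\cdot C)=de-m\sum n_i\ge e(d-3m)>0$. Part 7) then follows as 5) of Lemma~\ref{LemmaN0} does: by 2) and 6), $\mathcal O_X(-mF)$ is $(d-1)$-regular with respect to $H$ once $d\ge 4m+4$ (one needs $H^1(\mathcal O_X((d-2)H-mF))=0$, i.e.\ $d-2>4m+1$, and $H^2(\mathcal O_X((d-3)H-mF))=0$, i.e.\ $d-3\ge -2$), and Castelnuovo--Mumford regularity (p.~99 of \cite{M}, or \cite[Theorem 17.35]{AG}) yields the surjectivity of $H^0(\mathcal O_X(H))\otimes H^0(\mathcal O_X((d-1)H-mF))\to H^0(\mathcal O_X(dH-mF))$.

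The two parts that genuinely use the elliptic curve are 4) and 5). For 4), since $3H-F\sim E$ and $\sigma$ has effective divisor $E$, multiplication by $\sigma$ exhibits $\mathcal O_X((m-1)(3H-F))$ as a subsheaf of $\mathcal O_X(m(3H-F))$ whose quotient is a line bundle of degree $m(E\cdot E)=-3m<0$ on the elliptic curve $E$, hence has no global sections; starting from $H^0(X,\mathcal O_X)=\CC$ and inducting on $m$ gives $H^0(X,\mathcal O_X(m(3H-F)))=\CC\,\sigma^m$ for all $m\ge 1$, which yields the stated assertions (in particular $E$ is then the only effective divisor in $|3H-F|$, so $\mathcal O_X(3H-F)=\sigma\mathcal O_X$). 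For 5), suppose $D\sim dH-mF$ is effective with $0\le d<3m$; then $m\ge 1$ and $(D\cdot E)=3d-12m<-3m<0$, so $E$ must occur in $D$, say $D=E+D'$ with $D'\ge 0$ and $D'\sim (d-3)H-(m-1)F$, and $d-3<3(m-1)$ still holds; a descending induction on $m$ --- with the cases $d<3$ and $m=1$ handled directly using $(D'\cdot H)\ge 0$ and the fact that $D'=0$ would force $dH-mF\sim 3H-F$ --- produces a contradiction. I expect 5) to require the most care, only because of the edge cases in that induction; no single step is deep, and the real content of the lemma is that on $X$ an effective divisor of slope below $3$ is forced to contain the rigid $(-3)$-curve $E$.
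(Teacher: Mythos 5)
Your proposal is correct and follows essentially the same route as the paper's proof: parts 1), 2), 6) and 7) are verbatim the arguments of Lemma \ref{LemmaN0}, part 3) is the same Nakai--Moishezon computation, and part 5) is the same descent obtained by repeatedly splitting off $E$ (your attention to the terminal cases $d<3$, $m=1$ is welcome, since the paper's ``continuing in this way'' glosses over the endpoint). The only cosmetic difference is in 4), where you run the induction through the restriction sequence $0\to\mathcal O_X((m-1)E)\to\mathcal O_X(mE)\to\mathcal O_X(mE)\otimes\mathcal O_E\to 0$ and the negativity of $\deg(\mathcal O_X(mE)\otimes\mathcal O_E)=-3m$, whereas the paper argues directly that $(E\cdot mE)<0$ forces $E$ into the support of any effective $D\sim mE$; the two arguments are interchangeable.
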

  
  \begin{proof} The proofs of 1) and 2) are the same as the proofs of 1) and 2) of Lemma \ref{LemmaN0}.
  
  3). Suppose $C$ is an integral curve on $X$ other than $F_i$ or $E$. Then $(C\cdot E)\ge 0$ and $(C\cdot H)=(C\cdot\Lambda^*(H'))=(\Lambda_*(C)\cdot H')>0$ so $(C\cdot L)>0$, since $L\sim mE+(d-3m)H$.  Further, $(L\cdot E)=3d-12m>0$ and 
  $(L\cdot F_i)=m>0$ for $1\le i\le 12$, so every irreducible curve of $X$ has positive intersection number with $L$.
  Finally, $(L\cdot L)=d^2-12m^2>4m^2>0$, so $L$ is ample by the Nakai Moishezon criterion.
  
 4). We have that $mE\sim 3mH-mF$. Suppose that $D$ is an effective divisor on $X$ which is linearly equivalent to $3mH-mF$. Then $(E\cdot (3mH-mF))=(E\cdot mE)=-3m<0$ so $E$ is in the support of $D$, so $D-E$ is effective. By induction on $m$, we obtain $D=mE$.  
  
  5). Suppose there exists an effective divisor $D$ such that $D\sim dH-mF$. We compute
  $$
  (D\cdot E)=((dH-mF)\cdot(3H-F))=3d-12m<-3m.
  $$
  Thus $E$ is in the support of $D$, so $D-E\sim (d-3)H-(m-1)F$ is effective, with $d-3<3(m-1)$. Continuing in this way, we obtain that $(d-3m)H-2mF$ is an effective divisor, which is a contradiction to 1).
  
  The proof of 6) is as the proof of 4) of Lemma \ref{LemmaN0}, using 3) of this lemma.
  
  The proof of 7) is as the proof of 5) of Lemma \ref{LemmaN0}, using 6) and 2) of this lemma.
  
    \end{proof}  
    
    \begin{Lemma}\label{LemmaN11} Suppose that $0<3m<d\le 4m$. Then
    \begin{enumerate}
    \item[1)] $H^0(X,\mathcal O_X(dH-mF))>0$.
    \item[2)] $H^0(X,\mathcal O_X(dH-mF))=H^0(X,\mathcal O_X(3H-F))H^0(X,\mathcal O_X((d-3)H-(m-1)F)$ where $d-3>3(m-1)$.
    \item[3)] $H^0(X,\mathcal O_X(dH-mF))=H^0(X,\mathcal O_X(3H-F))^rH^0(X,\mathcal O_X(d'H-m'F))$ where
    $r=4m-d +1$, $d'=d-3r$, $m'=m-r$ satisfy $d'>4m'$.
    \end{enumerate}
    \end{Lemma}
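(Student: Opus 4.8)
The plan is to establish 1) and 2) directly and to derive 3) from 2) by iteration. The identity behind everything is $3H-F\sim E$: by 4) of Lemma \ref{LemmaN10} the section $\sigma$ has divisor $E$, so $\mathcal O_X(3H-F)\cong\mathcal O_X(E)$. For 1), I would rewrite $dH-mF\sim m(3H-F)+(d-3m)H\sim mE+(d-3m)H$. Since $m\ge 1$ and $d-3m\ge 1$ under the hypothesis $3m<d$, this is the class of an effective divisor, so $\sigma^{m}$ times a nonzero section of $\mathcal O_X((d-3m)H)$ is a nonzero element of $H^0(X,\mathcal O_X(dH-mF))$.

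For 2), set $D=dH-mF$ and consider the short exact sequence
$$0\to\mathcal O_X(D-E)\to\mathcal O_X(D)\to\mathcal O_X(D)|_E\to 0,$$
in which the first map is multiplication by $\sigma$. Using $E\sim 3H-F$ we identify $D-E\sim(d-3)H-(m-1)F$ (here $d-3>3(m-1)$ is just $d>3m$), so taking global sections reduces the claimed factorization to the vanishing $H^0(E,\mathcal O_X(D)|_E)=0$: indeed $H^0(X,\mathcal O_X(3H-F))=\CC\sigma$ by 4) of Lemma \ref{LemmaN10}, and once the vanishing holds the map $H^0(X,\mathcal O_X((d-3)H-(m-1)F))\to H^0(X,\mathcal O_X(dH-mF))$, being injective and, by the exact sequence, surjective, is an isomorphism, namely multiplication by $\sigma$. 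Now $\mathcal O_X(D)|_E$ is an invertible sheaf on the elliptic curve $E$ of degree $(D\cdot E)=3d-12m$, since $(H\cdot E)=3$ and $(F\cdot E)=12$ by the intersection data recorded above, and $3m<d\le 4m$ forces $-3m<3d-12m\le 0$. If $3d-12m<0$, a negative-degree invertible sheaf on the integral curve $E$ has no nonzero section. If $3d-12m=0$, i.e. $d=4m$, then $D=m(4H-F)\sim m(H+E)$, so $\mathcal O_X(D)|_E\cong\mathcal O_X(m(H+E))\otimes\mathcal O_E$, which has no nonzero section for $m\ne 0$ by (\ref{eqN12}). In both cases the vanishing holds, proving 2).

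For 3), I would iterate 2). Put $r=4m-d+1$; the hypothesis $3m<d\le 4m$ gives $1\le r\le m$. At the $j$-th stage ($0\le j\le r-1$) the divisor is $(d-3j)H-(m-j)F$, and the inequalities $0<3(m-j)<d-3j\le 4(m-j)$ hold: the first is $3m<d$, the last is $j\le 4m-d=r-1$, and $m-j\ge m-r+1\ge 1$. Hence 2) applies $r$ times and gives
$$H^0(X,\mathcal O_X(dH-mF))=H^0(X,\mathcal O_X(3H-F))^{r}\,H^0(X,\mathcal O_X(d'H-m'F)),\qquad d'=d-3r,\ \ m'=m-r.$$
Finally $d'-4m'=d+r-4m=1>0$, so $d'>4m'$, which is the asserted inequality.

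The step I expect to be the main obstacle is the boundary case $d=4m$ of 2): there $\mathcal O_X(dH-mF)|_E$ has degree exactly $0$ on $E$, so its having no nonzero section is not automatic and genuinely uses Zariski's assertion that $\mathcal O_X(H+E)\otimes\mathcal O_E$ has infinite order, recorded in (\ref{eqN12}). The remaining care needed is purely combinatorial: checking that $0<3(m-j)<d-3j\le 4(m-j)$ persists at every step of the iteration in 3) so that 2) may be reapplied, and that after exactly $r$ steps one reaches a pair with $d'>4m'$.
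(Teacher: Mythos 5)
Your proof is correct and follows essentially the same route as the paper: part 1) via $dH-mF\sim mE+(d-3m)H$, part 2) via the restriction sequence to $E$ with the vanishing of $H^0(E,\mathcal O_X(dH-mF)\otimes\mathcal O_E)$ coming from $(D\cdot E)=3d-12m\le 0$ together with (\ref{eqN12}) in the degree-zero case, and part 3) by iterating 2) exactly $r=4m-d+1$ times. Your explicit separation of the $d=4m$ boundary case and the step-by-step verification of the inequalities in the iteration are just more detailed versions of what the paper leaves implicit.
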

    
    \begin{proof} 1). We have that $dH-mF=m(3H-F)+(d-3m)H\sim mE+(d-3m)H$, which is a nonzero effective divisor. 
    
    2). Recall that $\sigma$ is a section of $\mathcal O_X(3H-F)$ whose divisor is $E$. Tensor
    $$
    0\rightarrow \mathcal O_X(-(3H-F))\stackrel{\sigma}{\rightarrow} \mathcal O_X\rightarrow \mathcal O_E\rightarrow 0
    $$
    with $\mathcal O_X(dH-mF)$ to get the exact sequence
    $$
    0\rightarrow H^0(X,\mathcal O_X((d-3)H-(m-1)F))\stackrel{\sigma}{\rightarrow} H^0(X,\mathcal O_X(dH-mF))\rightarrow H^0(E,\mathcal O_X(dH-mF)\otimes \mathcal O_E).
    $$
    The rightmost vector space is zero since $((dH-mF)\cdot E)=3d-12m\le0$, and by (\ref{eqN12}).
    
    3). For $t\in \QQ$, we have that 
    $(d-3t)H-(m-t)F$ satisfies $d-3t\le 4(m-t)$ if and only if $t\le 4m-d$.
     Now 3) follows from 1) and 2) of this lemma.
    
    \end{proof}

  Let $S=\CC[x_1,x_2,x_3]$ be the homogeneous coordinate ring of $\PP^2$ and $\mathfrak m$ be the graded maximal ideal of $S$. Let $P_i$ be the height two prime ideal in $S$ of the point $\alpha_i$ for $1\leq i\leq 12$. Then 
  $$
  S=\oplus_{d\ge 0}H^0(\PP^2,\mathcal O_{\PP^2}(dH'))=\oplus_{d\ge 0}H^0(X,\mathcal O_X(dH)),
  $$
   $$
   \mathfrak m=\oplus_{d>0}H^0(\PP^2,\mathcal O_{\PP^2}(dH'))=\oplus_{d> 0}H^0(X,\mathcal O_X(dH))
   $$
 and for $n_1,\ldots,n_{12}\ge 0$,
  $$
  \begin{array}{lll}
  P_1^{n_1}\cap\cdots \cap P_{12}^{n_{12}}&=&\bigoplus_{d>0}H^0(\PP^2,\mathcal O_{\PP^2}(dH')\otimes \mathcal I_{\alpha_1}^{n_1}\otimes\cdots\otimes\mathcal I_{\alpha_{12}}^{n_{12}})\\
  &=&\bigoplus_{d>0}H^0(X,\mathcal O_X(dH-n_1F_1-\cdots-n_{12}F_{12}).  
  \end{array}
  $$
  In particular,
  $$
  P_1^{n}\cap\cdots \cap P_{12}^{n}=  \bigoplus_{d>0}H^0(X,\mathcal O_X(dH-nF)).
  $$  
  By  5) of Lemma \ref{LemmaN10}, 
  \begin{equation}\label{eqN52}
  P_1^{n}\cap\cdots \cap P_{12}^{n}=  \bigoplus_{d\ge 3n}H^0(X,\mathcal O_X(dH-nF)).
  \end{equation}  
  The irreducible cubic form $G$ defining $E'$ is in $H^0(X,\mathcal O_X(3H-F))$ and 
  $$
  h^0(X,\mathcal O_X(3nH-nF))=1\mbox{ for $n>0$}
  $$
   by 4) of Lemma \ref{LemmaN10}, so 
  \begin{equation}\label{N51}
  H^0(X,\mathcal O_X(n(3H-F))=G^n\CC\mbox{ for }n>0.
  \end{equation}
    
  \begin{Proposition}\label{PropN50} Given $n\in \ZZ_{>0}$, and 
     $h \in H^0(X,\mathcal O_X(dH-nF))$ with  $d>3n$, there exists $s\in \ZZ_{>0}$ such that 
     $$
     h^s\in 
      \mathfrak m\left(P_1^{sn}\cap\cdots\cap P_{12}^{sn}\right).
      $$
  \end{Proposition}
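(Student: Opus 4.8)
The plan is to split the argument according to the ``slope'' $d/n$. In the range $d>4n$ one can simply raise $h$ to a high power and land in the regime where 7) of Lemma \ref{LemmaN10} applies, exactly as in the proof of Proposition \ref{PropN5}. In the range $3n<d\le 4n$ no power of $h$ ever reaches that regime, so first one has to peel off copies of the cubic $G$ using Lemma \ref{LemmaN11}, reducing to the previous case.

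First I would treat $d>4n$ (this is the base case, and I would state it for an arbitrary section, so it can later be reapplied). We have $h^s\in H^0(X,\mathcal O_X(sdH-snF))$, and $s(d-4n)\ge 4$ once $s$ is large, so choose such an $s$, giving $sd\ge 4sn+4$. Then 7) of Lemma \ref{LemmaN10} expresses every element of $H^0(X,\mathcal O_X(sdH-snF))$, in particular $h^s$, as a sum of products of an element of $H^0(X,\mathcal O_X(H))$ and an element of $H^0(X,\mathcal O_X((sd-1)H-snF))$. The first factor lies in $\mathfrak m$; since $d>3n$ forces $sd-1\ge 3sn$, the second factor lies in $P_1^{sn}\cap\cdots\cap P_{12}^{sn}$ by (\ref{eqN52}). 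Hence $h^s\in\mathfrak m\,(P_1^{sn}\cap\cdots\cap P_{12}^{sn})$.

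Next I would treat $3n<d\le 4n$; we may assume $h\ne 0$. By (\ref{N51}) the space $H^0(X,\mathcal O_X(3H-F))$ is spanned by $G$, so 3) of Lemma \ref{LemmaN11} writes $h=G^r h'$ with $r=4n-d+1\ge 1$, $h'\in H^0(X,\mathcal O_X(d'H-m'F))$, $d'=d-3r$, $m'=n-r=d-3n-1\ge 0$, and $d'>4m'$. If $m'=0$, then $h'$ is a nonzero homogeneous element of $S$ of positive degree, hence $h'\in\mathfrak m$, while $G^r=G^n\in P_1^n\cap\cdots\cap P_{12}^n$; so $h\in\mathfrak m\,(P_1^n\cap\cdots\cap P_{12}^n)$ and $s=1$ works. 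If $m'\ge 1$, the base case applies to $h'$ (legitimate as $d'>4m'$ with $m'\ge 1$), yielding $s$ with $(h')^s\in\mathfrak m\,(P_1^{sm'}\cap\cdots\cap P_{12}^{sm'})$; since $G$ vanishes at every $\alpha_i$ we have $G^{rs}\in P_1^{rs}\cap\cdots\cap P_{12}^{rs}$, and because $r+m'=n$,
$$
h^s=G^{rs}(h')^s\in\mathfrak m\,\bigl(P_1^{s(r+m')}\cap\cdots\cap P_{12}^{s(r+m')}\bigr)=\mathfrak m\,\bigl(P_1^{sn}\cap\cdots\cap P_{12}^{sn}\bigr).
$$

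The main obstacle I anticipate is not any single estimate but noticing that the naive ``raise to a power'' argument fails precisely on the band $3n<d\le 4n$ (where $sd\le 4sn$ for all $s$, so 7) of Lemma \ref{LemmaN10} is never directly available), and that Lemma \ref{LemmaN11} is exactly the device needed to descend from this band to the regime $d'>4m'$. Once that structure is in place, everything else is routine bookkeeping with the identifications $P_1^{\bullet}\cap\cdots\cap P_{12}^{\bullet}=\bigoplus H^0(X,\mathcal O_X(\ast H-\bullet F))$ and with the fact that $G$ cuts out $E'$ through all of the $\alpha_i$.
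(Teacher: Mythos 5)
Your proposal is correct and follows essentially the same route as the paper: split into the cases $d>4n$ and $3n<d\le 4n$, apply 7) of Lemma \ref{LemmaN10} to a suitable power of $h$ in the first case, and use 3) of Lemma \ref{LemmaN11} to factor out powers of $G$ and reduce the second case to the first. The only cosmetic difference is that the paper observes the uniform exponent $s=4$ works throughout, whereas you take $s$ large enough case by case.
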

  
  \begin{proof}  First suppose that $d>4n$. Then $4d\ge 4(4n)+4$ implies
  $$
  H^0(X,\mathcal O_X(4dH-4nF))=H^0(X,\mathcal O_X(H))H^0(X,\mathcal O_X(4d-1)H-4nF))
  $$
  by 7) of Lemma \ref{LemmaN10}. Thus $h^4\in \mathfrak m\left(P_1^{4n}\cap\cdots\cap P_{12}^{4n}\right)$.  
  
  Now suppose that $3n<d\le 4n$. Then by 3) of Lemma \ref{LemmaN11}, 
  $$
  H^0(X,\mathcal O_X(dH-nF))=H^0(3H-F))^rH^0(X,\mathcal O_X(d'H-m'F))
  $$
  for suitable $r,d',m'$ where $d'>4m'$ and $n=m'+r$. By the first part of this proof,
  $$
  \begin{array}{lll}
  h^4&\in& H^0(X,\mathcal O_X(H))H^0(X,\mathcal O_X(3H-F))^{4r}H^0(X,\mathcal O_X((4d'-1)H-4m'F))\\
  &\subset &\mathfrak m \left(P_1^{4n}\cap\cdots\cap P_{12}^{4n}\right).
  \end{array}
  $$  
    \end{proof} 
  
  Let $R=S_{\mathfrak m}$, a three dimensional regular local ring, with maximal ideal $m_R=\mathfrak mS_{\mathfrak m}$.
  Let $\mathfrak p_i=(P_i)_{\mathfrak m}$ for $1\le i\le 12$. The ideals $\mathfrak p_i$ are height two prime ideals in $R$.
  Let $\mathcal I=\{I_n\}$ where $I_n=\mathfrak p_1^{n}\cap \cdots\cap\mathfrak p_{12}^{n}$. The filtration $\mathcal I$ is the 1-divisorial  filtration on $R$ consisting of the symbolic powers  $I_n=I^{(n)}$ of $I=I_1$. Thus $\mbox{ht}(\mathcal I)=\mbox{ht}(I)=2$.

  
  

   \begin{Proposition}\label{PropAS221} Let $N=\sqrt{m_RR[\mathcal I]}$ be the radical of $m_RR[\mathcal I]$ in $R[\mathcal I]$.  Then 
  $$
  R[\mathcal I]/N\cong \CC[Gt]
  $$
  is a standard graded polynomial ring. 
  \end{Proposition}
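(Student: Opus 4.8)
The plan is to mimic the proof of Proposition \ref{PropAS22}, with the single element $\overline{Gt}$ playing the role of the free generators $\overline y_1,\ldots,\overline y_a$ there. First I would record that $N=\sqrt{m_RR[\mathcal I]}$ is a graded ideal, so that $R[\mathcal I]/N$ is a graded $\CC$-algebra whose degree-zero part is $R/(N\cap R)=R/m_R=\CC$. It then suffices to prove two things: (i) $R[\mathcal I]/N$ is generated as a $\CC$-algebra by the degree-one element $\overline{Gt}$, and (ii) the resulting graded surjection $\CC[T]\rightarrow R[\mathcal I]/N$, $T\mapsto\overline{Gt}$, is injective. Together these give $R[\mathcal I]/N\cong\CC[Gt]$, which is standard graded since $Gt$ lies in $t$-degree one.

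For (i), since $N$ is graded it is enough to show that the degree-$n$ piece $I^{(n)}t^n$ maps onto $\CC\cdot\overline{G^nt^n}$ in $R[\mathcal I]/N$. Recall $I^{(n)}=\mathfrak p_1^{n}\cap\cdots\cap\mathfrak p_{12}^{n}$ is the extension to $R=S_{\mathfrak m}$ of the homogeneous ideal $P_1^{n}\cap\cdots\cap P_{12}^{n}$, which by (\ref{eqN52}) equals $\bigoplus_{d\ge 3n}H^0(X,\mathcal O_X(dH-nF))$, with degree-$3n$ part equal to $G^n\CC$ by (\ref{N51}). Since $S$ is Noetherian, $P_1^{n}\cap\cdots\cap P_{12}^{n}$ has finitely many homogeneous generators; after including $G^n$ and discarding the other degree-$3n$ generators (which are scalar multiples of $G^n$), we see that $I^{(n)}$ is generated over $R$ by $G^n$ together with finitely many homogeneous $h$ of degree $d>3n$. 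For each such $h$, Proposition \ref{PropN50} produces $s\in\ZZ_{>0}$ with $h^s\in m_RI^{(sn)}$, so $(ht^n)^s\in m_RR[\mathcal I]$ and hence $ht^n\in N$. Combined with $m_RI^{(n)}t^n\subset m_RR[\mathcal I]\subset N$, this shows that modulo $N$ the degree-$n$ piece is spanned by $\overline{G^nt^n}=\overline{(Gt)^n}$, proving (i).

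For (ii), suppose $(Gt)^n\in N$ for some $n>0$. Then $(G^nt^n)^m=G^{nm}t^{nm}\in m_RR[\mathcal I]$ for some $m\ge 1$, i.e. $G^{nm}\in m_RI^{(nm)}$. Now $G^{nm}$ is a nonzero form of degree $3nm$, so its $m_R$-order is exactly $3nm$. On the other hand, by (\ref{eqN52}) (equivalently, by part 5) of Lemma \ref{LemmaN10}) every nonzero homogeneous element of $P_1^{nm}\cap\cdots\cap P_{12}^{nm}$ has degree $\ge 3nm$, so $I^{(nm)}\subset m_R^{3nm}$ and hence every element of $m_RI^{(nm)}$ has $m_R$-order $\ge 3nm+1$. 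This contradiction forces $(Gt)^n\notin N$, so $\CC[T]\rightarrow R[\mathcal I]/N$ is injective and the proof is complete.

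The only point requiring care is the passage between the local ring $R$ and the homogeneous coordinate ring $S$: one must note that $I^{(n)}$ is generated over $R$ by homogeneous elements of $S$ and that the $m_R$-adic order of a form coincides with its $\mathfrak m$-adic degree-order, exactly as in Proposition \ref{PropAS22}. I expect the main (though routine) obstacle to be the bookkeeping when invoking Proposition \ref{PropN50}, whose conclusion lands the relevant powers in $m_RI^{(sn)}$ rather than directly in $m_RR[\mathcal I]$; but the trivial inclusion $m_RI^{(sn)}t^{sn}\subset m_RR[\mathcal I]$ bridges that gap.
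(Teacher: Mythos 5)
Your proof is correct and takes essentially the same route as the paper: the paper's own proof of Proposition \ref{PropAS221} is a one-line appeal to (\ref{eqN52}), (\ref{N51}) and Proposition \ref{PropN50}, and your argument simply spells out how those three facts combine, following the template of the paper's detailed proof of Proposition \ref{PropAS22} (generation modulo $N$ via Proposition \ref{PropN50}, injectivity via the $m_R$-order comparison). No gaps.
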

  
  \begin{proof}  
  $$
  R[\mathcal I]/N \cong \left(\sum_{n\ge 0} \mathfrak p_1^{(n)}\cap \cdots \cap \mathfrak p_{12}^{(n)}t^n\right)/\sqrt{\mathfrak m
  \left(\sum_{n\ge 0} \mathfrak p_1^{(n)}\cap \cdots \cap \mathfrak p_{12}^{(n)}t^n\right) }\cong \CC[Gt].
  $$ 
  by (\ref{eqN52}), (\ref{N51}) and Proposition \ref{PropN50}.
  \end{proof}
  
   By the method of  \cite{Ro}, we construct a height two prime ideal 
  $\mathfrak p$ in $R=\CC[x_1,x_2,x_3]_{(x_1,x_2,x_3)}$ and a continuous $\CC$-algebra  isomorphism $\phi:\hat R\rightarrow \hat R$ such that $\phi(\hat m_R)=\hat m_R$ and
  $\phi(\widehat{I^{(n)}})=\widehat{\mfp^{(n)}}$ for all $n\ge 0$, where $I$ is the ideal defined before Proposition \ref{PropAS221}. We have that $\hat{\mathfrak p}^{(n)}= \widehat{\mathfrak p^{(n)}}$ and
  $\hat I^{(n)}=\widehat{I^{(n)}}$ for all $n$, as explained in the proof of \cite[Proposition 1]{Ro}.
  
  Since the Rees algebras of all truncations of $\{\mfp^{(n)}\}$ are excellent, we have that
  $$
  \hat R\sqrt{m_RR[\mfp t, \mfp^{(2)}t^2,\ldots,\mfp^{(a)}t^a]}=\sqrt{ \hat m_R\hat R[\hat \mfp t, \hat \mfp^{(2)}t^2,\ldots,\hat \mfp^{(a)}t^a]}
  $$
  for all $a\in \ZZ_{>0}$ and so 
    $$
   \hat R\sqrt{m_RR[\{\mfp^{(n)}\}]}=\sqrt{ \hat m_R\hat R[\{\hat \mfp^{(n)}\}]}.
  $$
  We thus have that 
  $$
  \begin{array}{l}
  R[\{\mathfrak p^{(n)}\}]/\sqrt{m_RR[\{\mathfrak p^{(n)}\}]}\cong \left(R[\{\mathfrak p^{(n)}\}]/\sqrt{m_RR[\{\mathfrak p^{(n)}\}]}\right)\otimes_R\hat R\cong
  \hat R[\{\widehat{\mathfrak p^{(n)}}\}]/\sqrt{\hat m_R \hat R[\{\widehat{\mathfrak p^{(n)}}\}]}\\
  \cong  \hat R[\{\widehat{\mathcal I^{(n)}}\}]/\sqrt{\hat m_R \hat R[\{\widehat{\mathcal I^{(n)}}\}]}\cong\left(R[\mathcal I]/\sqrt{m_RR[\mathcal I]}\right)\otimes_R\hat R  \cong R[\mathcal I]/\sqrt{m_RR[\mathcal I]}\cong \CC[t]
  \end{array}
  $$  
  by Proposition \ref{PropAS221}.
  Thus $\mathcal J=\{\mathfrak p^{(n)}\}$ fulfills the conditions of the example.

\end{document}